\author{{\bf Gregor Kemper} \\
  \normalsize Technische Universit\"at M\"unchen, Zentrum Mathematik - M11 \\
  \normalsize  Boltzmannstr. 3, 85\,748 Garching, Germany \\
   \normalsize {\tt kemper$@$ma.tum.de}}
 \title{\bf Using Extended Derksen Ideals in Computational Invariant
   Theory}
\begin{document}

\maketitle

\begin{abstract}
  \normalsize \noindent The main purpose of this paper is to develop
  new algorithms for computing invariant rings in a general
  setting. This includes invariants of nonreductive groups but also of
  groups acting on algebras over certain rings. In particular, we
  present an algorithm for computing invariants of a finite group
  acting on a finitely generated algebra over a Euclidean ring. This
  may be viewed as a first step in ``computational arithmetic
  invariant theory.'' As a special case, the algorithm can compute
  multiplicative invariant rings. Other algorithms are applicable to
  nonreductive groups and are, when applied to reductive groups, often
  faster than the algorithms known to date.

  The main tool is a generalized and modified version of an ideal that
  was already used by Derksen in his algorithm for computing
  invariants of linearly reductive groups. As a further application,
  these so-called extended Derksen ideals give rise to
  invariantization maps, which turn an arbitrary ring element into an
  invariant.

  For the most part, the algorithms of this paper have been
  implemented. \\ \\
  {\em Key words: Algorithmic invariant theory, multiplicative
    invariant theory, arithmetic invariant theory, invariantization,
    Italian problem, additive group.}
\end{abstract}


\section*{Introduction} \label{sIntro}%

The computation of invariant rings is a classical problem in invariant
theory. It is well-known that all invariant rings of reductive groups
are finitely generated. So are invariant rings of finite groups acting
on finitely generated algebras over a Noetherian ring. To date, most
computational methods are applicable only to reductive groups acting
on affine varieties (see \mycite{Derksen:99} and
\mycite{kem.separating}) or to finite groups acting on finitely
generated algebras over a field (see \mycite{kem:e} and
\mycite[Section~2.1]{Kamke:Diss}). However, nonreductive groups are
often important in practice (such as in applications to image
processing, where nonreductivity occurs in the guise of translational
motions), and experience shows that their invariants are often rather
harmless. Therefore it is desirable to have algorithmic methods for
dealing with such invariants. It would also be desirable to be able to
compute invariant rings over rings, such as $\ZZ$, rather than over
fields. For example, multiplicative invariants (see
\mycite{Lorenz:2005}) are invariants over $\ZZ$. Moreover, if $G
\subseteq \GL_n(\mathcal{O})$ with $\mathcal{O}$ an integral extension
of $\ZZ$, then $\mathcal{O}[x_1 \upto x_n]^G$ often provides a
``universal'' invariant ring that specializes to all $K[x_1 \upto
x_n]^G$ for $K$ a field with a map $\mathcal{O} \to K$. For example,
this works if $G$ is a permutation group or, more generally, a
monomial group. So the invariant ring over $\mathcal{O}$ displays all
phenomena that occur in the various characteristics.

First steps toward computing invariants of nonreductive groups were
taken by \mycite{essen}, \mycite{Derksen.Kemper06}, and
\mycite{Kamke:Diss} (see also \mycite{Kamke:Kemper:2011}). In
particular, \mycite{Kamke:Diss} modified an algorithm by
\mycite{MQB:99} (see also \mycite{HK:06}) for computing invariant
fields and arrived at an algorithm for computing a localization of the
invariant ring of a unipotent group. (In fact, Kamke's algorithm is
applicable whenever the invariant field equals the ring of fractions
of the invariant ring.) The algorithms of \mycite{MQB:99} and
\mycite{Kamke:Diss} on the one hand, and the celebrated algorithm of
\mycite{Derksen:99} for computing invariant rings of linearly
reductive group on the other hand, are based on very different ideas,
but uncannily they both use the same ideal as a main computational
tool. This ideal has become known as the {\it Derksen ideal} (see, for
example, \mycite{kem.separating}), and it corresponds to the {\em
  graph of the action}. \mycite{HK:06} modified the algorithm of
\mycite{MQB:99} for computing invariant fields by introducing a {\em
  cross-section}, which is a subvariety that intersects with a generic
orbit in finitely many points. This can greatly increase the
efficiency of the algorithm. \citename{HK:06}'s work prompted
\mycite{Kamke:Kemper:2011} to define {\em extended Derksen ideals}, a
purely algebraic notion that captures the idea of cross-sections.

The research of this paper started as an attempt to carry those ideas
further. We give a new definition of an extended Derksen ideal, which
applies in a generalized situation. In doing so, we introduce tamely
and nontamely extended Derksen ideals, where the tame ones lead to an
algorithm for the computation of invariant fields and correspond to
\citename{HK:06}'s idea of a cross section. But nontamely extended
Derksen ideals form a larger class and sometimes enable the
computation of an invariant ring when tamely extended Derksen ideals
fail. Another way in which the concept of extended Derksen ideals from
this paper is more general is that they are defined over rings rather
than fields. In the case of finite groups, this additional generality
leads to an algorithm for computing generating sets of an invariant
ring of a finite group acting on a finitely generated domain $R$ over
a Euclidean ring or, more generally, any ring that allows Gr\"obner
basis computations. As a special case, the algorithm can compute
multiplicative invariant rings. So this paper solves one of the open
problems (Problem~7) from Lorenz' book~[\citenumber{Lorenz:2005}]. We
also give an algorithm, albeit a less efficient one, that does not
require $R$ to be an integral domain. This makes Noether's finiteness
result~[\citenumber{noe:c}] constructive in a generality that may be
impossible to extend. Further results about the computation of
invariants of infinite groups will be mentioned below. \\

\sref{sDerksenA} of the paper is devoted to the definition of extended
Derksen ideals and to the basic results pertaining to them. Apart from
allowing the computation of invariant fields and localizations of
invariant rings, they also give rise to {\em invariantization} maps,
i.e., maps sending an arbitrary ring element to an invariant and an
invariant to itself. In fact, we obtain invariantization maps that are
also linear over a localized invariant ring. As hinted at above, using
extended Derksen ideals only yields a localization $R^G_a$ of an
invariant ring. We present a semi-algorithm for extracting the
original invariant ring $R^G$ from this, which terminates after
finitely many steps if and only if $R^G$ is finitely
generated. \sref{sFinite} is devoted to the case of finite groups. It
contains the algorithms mentioned above and also some examples,
emphasizing multiplicative invariants and invariants of linear
actions.

The remaining sections of the paper focus on the situation that a
linear algebraic group over an algebraically closed field acts on an
irreducible affine variety. Unfortunately, when using a tamely
extended Derksen ideal, the algorithm for computing a localization of
the invariant ring requires that the invariant field equals the field
of fractions of the invariant ring. This restriction is discussed in
\sref{sItalian}, and it is completely circumvented by an algorithm
given in the final section of the paper. Extended Derksen ideals have
algebraic, geometric, and computational aspects. The latter two
aspects are dealt with in Sections~\ref{sDerksenG}
and~\ref{sDerksenC}. \sref{sDerksenG} studies geometric
interpretations of extended and tamely extended Derksen
ideals. \sref{sDerksenC} gives algorithms for computing tamely
extended Derksen ideals in such a way that a maximal reduction in the
number of variables that need to be taken into the Gr\"obner basis
computation is achieved. It is by this reduction that extended Derksen
ideals boost the efficiency of computations. With this, the geometry,
computation, and applicability of tamely extended Derksen ideals are
quite well understood, while all these issues are still rather
mysterious for their nontame cousins. The various strands flow
together in an algorithm for computing a localization of an invariant
ring, whose result can be fed into the semi-algorithm mentioned
above. The algorithm takes a particularly simple form in the special
case of the additive group. In fact, the algorithm given by
\mycite{essen} appears as a special case of the algorithm from this
paper. The final section of the paper is devoted to optimizations that
apply to the case of linear group actions on a vector space. The
section also contains an algorithm for computing invariants of
reductive groups, which is an alternative to Derksen's algorithm (see
\mycite{Derksen:99}) and the algorithm by the
author~[\citenumber{kem.separating}]. Running times of the algorithms
are compared. \\

Throughout the article, a ring is understood to be commutative with an
identity element. An algebra $R$ over a ring $K$ is a ring $R$ that
contains $K$ as a subring, and $K[a_1 \upto a_n] \subseteq R$ denotes
the subalgebra generated by elements $a_i \in R$. A homomorphism of
$K$-algebras is understood to fix $K$. Moreover, $(a_1 \upto a_n)
\subseteq R$ stands for the ideal generated by the~$a_i$.

\section{Extended Derksen ideals: algebraic
  aspects} \label{sDerksenA}%

We start by introducing a generalized notion of a Derksen ideal. We
also introduce extended and tamely extended Derksen ideals.

\begin{defi} \label{dDerksen}%
  Let $G$ be a group acting on an algebra $S$ over a ring $K$ (which
  in many applications is a field) by automorphisms. Let $R = K[a_1
  \upto a_n] \subseteq S$ be a finitely generated, $G$-stable
  subalgebra and take indeterminates $y_1 \upto y_n$, on which $G$
  acts trivially.
  \begin{enumerate}
  \item \label{dDerksenA} The \df{Derksen ideal} (with respect to the
    $a_i$) is the intersection
    \[
    D_{a_1 \upto a_n}:= \bigcap_{\sigma \in G} \bigl(y_1 - \sigma
    \cdot a_1 \upto y_n - \sigma \cdot a_n\bigr) \subseteq S[y_1 \upto
    y_n].
    \]
  \item \label{dDerksenB} A proper, $G$-stable ideal $E \subsetneqq
    S[y_1 \upto y_n]$ is called an \df{extended Derksen ideal} (with
    respect to the $a_i$) if it contains $D_{a_1 \upto a_n}$.
  \item \label{dDerksenC} For an extended Derksen ideal $E$, consider
    the ideal
    \[
    I := \bigl\{f(a_1 \upto a_n) \mid f \in K[y_1 \upto y_n] \cap
    E\bigr\} \subseteq R.
    \]
    Then $E$ is called \df{tamely extended} if the intersection
    $\bigcap_{\sigma \in G} \sigma \cdot I$ is nilpotent.
  \end{enumerate}
\end{defi}

It is easy to see that $D_{a_1 \upto a_n}$ itself is a tamely extended
Derksen ideal (with $I = \{0\}$).
Before discussing geometric and computational aspects of our notions,
we prove the main results of this paper. The next five theorems will
all apply to the following situation:

\begin{assumption} \label{Assumption}%
  $G$ is a group acting on a field $L$ by automorphisms. We fix a
  subring $K \subseteq L^G$ and a finitely generated, $G$-stable
  $K$-subalgebra $R = K[a_1 \upto a_n] \subseteq L$ with $\Quot(R) =
  L$. Let $E \subseteq L[y_1 \upto y_n]$ be an extended Derksen ideal
  with respect to the $a_i$. Assume that $\mathcal{G}$ is a reduced
  Gr\"obner basis (see \mycite[Definition~5.29]{BW}) of $E$ with
  respect to an arbitrary monomial ordering, and let $A \subseteq L$
  be the $K$-subalgebra generated by the coefficients of all
  polynomials from $\mathcal{G}$.
\end{assumption}

Under this assumption, the ideals that are intersected when forming
$D_{a_1 \upto a_n}$ are maximal. It follows that if $G$ is finite,
then the only extended Derksen ideal is $D_{a_1 \upto a_n}$
itself.

Our first result is essentially (the first part of) Theorem~3.7 from
\mycite{HK:06}, with two differences: On the one hand, is is more
general since $L$ is not required to be a rational function field, but
on the other hand, it is more restricted since it does not extend to
rational actions. For instance, Example~4.2 from~[\citenumber{HK:06}]
cannot be dealt with by \tref{tInvariantField}.

\begin{theorem}[Invariant field] \label{tInvariantField}%
  Under the assumption~\ref{Assumption}, suppose that $E$ is a tamely
  extended Derksen ideal. Then
  \[
  L^G = \Quot(A).
  \]
\end{theorem}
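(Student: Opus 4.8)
The plan is to prove the two inclusions of $L^G = \Quot(A)$ separately; the first is soft, and the real content is in the second.

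For $\Quot(A) \subseteq L^G$ I would exploit the uniqueness of the reduced Gr\"obner basis. Since $G$ acts trivially on the $y_i$, every $\sigma \in G$ fixes each monomial and hence the chosen monomial ordering; therefore $\sigma(\mathcal{G})$ is again a reduced Gr\"obner basis, namely of $\sigma(E)$, with the same leading monomials as $\mathcal{G}$. As $E$ is $G$-stable, $\sigma(E) = E$, so by uniqueness $\sigma(\mathcal{G}) = \mathcal{G}$ as a set. Because the elements of a reduced Gr\"obner basis have pairwise distinct leading monomials and $\sigma$ preserves leading monomials, the induced permutation of $\mathcal{G}$ must be the identity, i.e.\ $\sigma(g) = g$ for every $g \in \mathcal{G}$. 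Hence all coefficients occurring in $\mathcal{G}$ lie in $L^G$, so $A \subseteq L^G$, and since $L^G$ is a field this yields $\Quot(A) \subseteq L^G$.

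The substance of the theorem is $L^G \subseteq \Quot(A)$. Let $b \in L^G$. Since $\Quot(R) = L$, write $b = p(a_1 \upto a_n)/q(a_1 \upto a_n)$ with $p, q \in K[y_1 \upto y_n]$ and $q(a_1 \upto a_n) \neq 0$. Using that $b$ is invariant and that $p, q$ have coefficients in $K \subseteq L^G$, one checks that $h := b \cdot q - p$ vanishes at $(\sigma \cdot a_1 \upto \sigma \cdot a_n)$ for every $\sigma \in G$, so $h \in D_{a_1 \upto a_n} \subseteq E$. The key observation is that the normal form operation $\mathrm{NF}$ with respect to $\mathcal{G}$ is $L$-linear and, since the leading coefficients of the $g \in \mathcal{G}$ are $1$ and all their coefficients lie in $A$, the division algorithm keeps $A[y_1 \upto y_n]$ invariant. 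Applying $\mathrm{NF}$ to $h \in E$ gives $0 = b \cdot \mathrm{NF}(q) - \mathrm{NF}(p)$ with $\mathrm{NF}(q), \mathrm{NF}(p) \in A[y_1 \upto y_n]$. Provided $\mathrm{NF}(q) \neq 0$, comparing the coefficient of a monomial that actually occurs in $\mathrm{NF}(q)$ expresses $b$ as a quotient of two elements of $A$, whence $b \in \Quot(A)$.

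It remains to arrange $\mathrm{NF}(q) \neq 0$, and this is where tameness enters; I expect this to be the main obstacle. Note $\mathrm{NF}(q) = 0$ forces $q \in E \cap K[y_1 \upto y_n]$ and hence $q(a_1 \upto a_n) \in I$. So I would argue by contradiction: if every representation $b = p(a_1 \upto a_n)/q(a_1 \upto a_n)$ had $q(a_1 \upto a_n) \in I$, then applying each $\sigma^{-1} \in G$ to one fixed representation and using $G$-stability of $R$ to rewrite $\sigma^{-1}(p(a_1 \upto a_n))$ and $\sigma^{-1}(q(a_1 \upto a_n))$ again as values of polynomials over $K$ would exhibit another such representation, forcing $q(a_1 \upto a_n) \in \sigma \cdot I$ for all $\sigma$, hence $q(a_1 \upto a_n) \in \bigcap_{\sigma \in G} \sigma \cdot I$. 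Since $E$ is tamely extended and $R$ is a domain, this intersection is $\{0\}$, contradicting $q(a_1 \upto a_n) \neq 0$. Thus some representation has $\mathrm{NF}(q) \neq 0$, completing the argument. The only routine points needing care are the verification that $h$ lies in $D_{a_1 \upto a_n}$ and the book-keeping that $\mathrm{NF}$ stays inside $A[y_1 \upto y_n]$.
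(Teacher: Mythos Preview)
Your proposal is correct and follows essentially the same route as the paper: the inclusion $A \subseteq L^G$ via uniqueness of the reduced Gr\"obner basis of the $G$-stable ideal $E$, and the reverse inclusion by writing $b \in L^G$ as a fraction, pushing $h = bq - p$ into $D_{a_1 \upto a_n} \subseteq E$, and using $L$-linearity of the normal form together with tameness to guarantee a denominator with nonzero normal form. The only cosmetic difference is that the paper packages your ``apply $\sigma^{-1}$ to one representation to get another'' step as the observation that the denominator ideal $J = \{d \in R \mid bd \in R\}$ is $G$-stable, hence $J \subseteq I$ would force $J \subseteq \bigcap_\sigma \sigma \cdot I = \{0\}$; this is exactly your contradiction argument, phrased more compactly.
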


We will prove the above theorem together with
Theorems~\ref{tLocalize} and~\ref{tInvariantization}.
Notice that \tref{tInvariantField}, just as the other results from
this section, requires no hypothesis on properties of the group action
(such as reductivity).

The following result deals with the computation of a localization of
the invariant ring $R^G$.

\begin{theorem}[Localized invariant ring] \label{tLocalize}%
  Under the assumption~\ref{Assumption} we have $R^G \subseteq A
  \subseteq L^G$.  If there exists a nonzero invariant $a \in R^G$
  with $A \subseteq R_a := R[a^{-1}]$, then
  \begin{equation} \label{eqLocalize}%
    R^G_a = A_a.
  \end{equation}
\end{theorem}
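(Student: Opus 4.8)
The plan is to first establish the chain of inclusions $R^G \subseteq A \subseteq L^G$, and then, under the hypothesis on $a$, to upgrade the middle inclusion $A \subseteq L^G$ to the claimed equality of localizations. The inclusion $A \subseteq L^G$ should come almost for free: by Theorem~\ref{tInvariantField} applied to $E$ (which is assumed to be tamely extended — wait, this theorem does not assume tameness, so I should be careful), we would like $A \subseteq L^G$. Since Theorem~\ref{tLocalize} does \emph{not} assume $E$ is tamely extended, the containment $A \subseteq L^G$ must instead be argued directly from the fact that $E$ is $G$-stable and $\mathcal{G}$ is the \emph{reduced} Gröbner basis: because $G$ fixes $K$ and the $y_i$, each $\sigma \in G$ maps the reduced Gröbner basis of $E$ to the reduced Gröbner basis of $\sigma \cdot E = E$, and by uniqueness of the reduced Gröbner basis $\sigma$ fixes $\mathcal{G}$ polynomial-by-polynomial; hence $\sigma$ fixes every coefficient, so the coefficients lie in $L^G$, giving $A \subseteq L^G$. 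For the inclusion $R^G \subseteq A$, I would take an arbitrary $b \in R^G$, write $b = h(a_1,\dots,a_n)$ for some $h \in K[y_1,\dots,y_n]$, and show that the polynomial $h(y_1,\dots,y_n) - b$ lies in $D_{a_1 \upto a_n} \subseteq E$: indeed for each $\sigma$, evaluating $y_i \mapsto \sigma \cdot a_i$ sends $h(y_1,\dots,y_n) - b$ to $\sigma\cdot h(a_1,\dots,a_n) - b = \sigma \cdot b - b = 0$ since $b$ is invariant, so $h(\underline y) - b$ lies in every ideal $(y_1 - \sigma\cdot a_1, \dots, y_n - \sigma\cdot a_n)$, hence in their intersection $D_{a_1 \upto a_n} \subseteq E$. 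Reducing $h(\underline y) - b$ modulo $\mathcal{G}$ then expresses $b$ (the constant term, up to sign) as an $A$-combination of leading-coefficient data, so $b \in A$; more carefully, the normal form of $h(\underline y) - b$ with respect to $\mathcal{G}$ is $0$, and tracing the reduction shows $b$ is an $A$-linear combination of evaluations of polynomials in $\mathcal{G}$, forcing $b \in A$.

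For the equality $R^G_a = A_a$ under the extra hypothesis, the inclusion $R^G_a \subseteq A_a$ is immediate from $R^G \subseteq A$ (and $a \in R^G$). For the reverse inclusion $A_a \subseteq R^G_a$: by hypothesis $A \subseteq R_a$, and we have just shown $A \subseteq L^G$; therefore $A \subseteq R_a \cap L^G$. Now I claim $R_a \cap L^G = R^G_a$. The inclusion $\supseteq$ is clear. For $\subseteq$, take $x \in R_a \cap L^G$; then $x = r/a^k$ for some $r \in R$, $k \geq 0$, and $x$ is $G$-invariant. Since $a \in R^G$, $a^k$ is invariant, so $r = a^k x$ is invariant (being a product of invariants), i.e.\ $r \in R^G$; hence $x = r/a^k \in R^G_a$. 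This gives $A \subseteq R^G_a$, and localizing at $a$ (noting $a$ is already invertible in $R^G_a$) yields $A_a \subseteq R^G_a$. Combined with the reverse inclusion this gives \eqref{eqLocalize}.

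The main obstacle I anticipate is the careful bookkeeping in the step $R^G \subseteq A$: one must ensure that reducing $h(\underline y) - b$ by the reduced Gröbner basis $\mathcal{G}$ really does exhibit the constant $b$ as an element of $A$, rather than merely of $L$. The key point is that all the syzygy/reduction coefficients arising in the division algorithm, applied to a polynomial whose coefficients lie in $A$ (here $h$ has coefficients in $K \subseteq A$ and $b$ is a priori only in $L$ — so one should instead observe that $b$ appears as the sole constant term and track it through), stay in $A$ because the leading coefficients of elements of $\mathcal{G}$ are units in $A$ only after suitable normalization — in fact in a reduced Gröbner basis over a field the leading coefficients are $1$, but here the ground ring of $E$ is $L$, so $\mathcal{G}$ has leading coefficient $1$ and the reduction stays within the $L$-span; the subtlety is then to see that the \emph{constant term} $b$ of the normal form being zero forces $b$ to equal an $A$-combination of the non-leading coefficients of $\mathcal{G}$. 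This is where the definition of $A$ as generated by \emph{all} coefficients of \emph{all} polynomials in $\mathcal{G}$ is exactly what is needed, and I expect the argument to go through cleanly once set up correctly. A clean way to package this: the image of $E$ in $(L/\text{(something)})$... — but more simply, reduce $b$ itself (a constant) modulo $\mathcal{G}$ using that $h(\underline y) - b \in E$ forces the normal form of the constant $b$ to be the normal form of $h(\underline y)$, which is a polynomial with coefficients in $A$ and no constant term unless $b$ contributes, yielding $b \in A$.
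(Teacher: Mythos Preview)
Your proposal is correct and follows essentially the same route as the paper: the $G$-stability of $E$ plus uniqueness of reduced Gr\"obner bases gives $A \subseteq L^G$; for $R^G \subseteq A$ one shows $h(\underline{y}) - b \in D_{a_1,\dots,a_n} \subseteq E$ and reads off $b$ from the normal form; and the localization equality follows from $A \subseteq R_a \cap L^G = (R_a)^G = (R^G)_a$. Your ``main obstacle'' paragraph overcomplicates the middle step --- the clean version (which the paper uses) is simply that $L$-linearity of the normal form gives $0 = \NF_{\mathcal{G}}(h - b) = \NF_{\mathcal{G}}(h) - b\,\NF_{\mathcal{G}}(1) = \NF_{\mathcal{G}}(h) - b$ (using that $E$ is proper so $\NF_{\mathcal{G}}(1)=1$), and since $h \in K[\underline{y}]$ and $\mathcal{G} \subseteq A[\underline{y}]$ with monic leading terms, the division algorithm stays in $A[\underline{y}]$, whence $b = \NF_{\mathcal{G}}(h) \in A[\underline{y}] \cap L = A$.
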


\begin{ex} \label{exC2}%
  Let $K$ be an arbitrary integral domain.
  \begin{enumerate}
    \renewcommand{\theenumi}{\arabic{enumi}}
  \item Consider the automorphism of the polynomial ring $K[x]$
    sending~$x$ to $1-x$. This generates a group $G \cong C_2$ whose
    Derksen ideal is
    \[
    D_x = \bigl(y - x\bigr) \cap \bigl(y - (1-x)\bigr) = \bigl(y^2 - y
    - (x^2 - x)\bigr) \subseteq L[y],
    \]
    where $L = \Quot(K[x])$. We already have a Gr\"obner basis, and
    the algebra generated by its coefficients is $A = K[x^2 - x]$,
    which happens to lie in $K[x]$. So we obtain
    \[
    K[x]^G = K[x^2 - x].
    \]
  \item \label{exC22} The automorphisms $x \to -x$ and $x \to x^{-1}$
    of the Laurent polynomial ring $K[x,x^{-1}]$ generate a group $G
    \cong C_2 \times C_2$. It is easy to see (using \pref{pFinite}
    below, for example) that the Derksen ideal is
    \[
    D_{x,x^{-1}} = \bigl(y_1^4 - (x^2 + x^{-2}) y_1^2 + 1,y_2 + y_1^3
    - (x^2 + x^{-2}) y_1\bigr) \subseteq L[y_1,y_2],
    \]
    where $L = \Quot(K[x])$. We obtain
    \[
    K[x,x^{-1}]^G = K\bigl[x^2 + x^{-2}\bigr].
    \]
  \end{enumerate}
  As these computations can easily be done by hand, it is not
  surprising that the results can also be verified directly quite
  easily. The significance of the example lies in the fact that it
  cannot be treated with the methods of \mycite{HK:06} or
  \mycite{Kamke:Kemper:2011} since ground ring $K$ need not be a field
  and in~\eqref{exC22} the action is not on a polynomial ring.
\end{ex}


If $G$ is finite, it is clear that $a \in R^G$ as in the
\tref{tLocalize} exists: Choose a common denominator of the generators
of $A$ and take~$a$ as its orbit product. In \sref{sItalian}, we will
discuss the existence of~$a$ in the case of algebraic groups. Once
again, in the above theorem $G$ is not assumed to be reductive, so the
theorem can (and does) provide finitely generated localizations of
nonfinitely generated invariant rings. This is an instance of the
following more general result, which can be found in
\mycite[Proposition~2.1(b)]{Giral:81} or
\mycite[Exercise~10.3]{Kemper.Comalg}: For every subalgebra $B
\subseteq A$ of a finitely generated domain $A$ over a ring there
exists a nonzero $a \in B$ such that $B_a$ is finitely generated.

We still assume the situation given by
Assumption~\ref{Assumption}. The Gr\"obner basis $\mathcal{G}$ induces
a normal form map $\NF_\mathcal{G}$. Using this, we define a new map
as follows: An element $b \in R$ can be written as $b = f(a_1 \upto
a_n)$ with $f \in K[y_1 \upto y_n]$. Define
\[
\mapl{\phi_\mathcal{G}}{R}{A}{b}{\bigl(\NF_\mathcal{G}(f)\bigr)(0
  \upto 0)}
\]
(i.e., set all $y_i$ equal to zero in the normal form of~$f$). We
call~$\phi_\mathcal{G}$ the \df{invariantization} map.

\begin{theorem}[Invariantization] \label{tInvariantization}%
  The invariantization map~$\phi_\mathcal{G}$ is a well-defined
  homomorphism of $R^G$-modules. It restricts to the identity on
  $R^G$. If~\eqref{eqLocalize} is satisfied, then~$\phi_\mathcal{G}$
  uniquely extends to an $R_a^G$-linear projection $R_a
  \twoheadrightarrow R_a^G$, and in particular $R_a^G$ is a direct
  summand of $R_a$.
\end{theorem}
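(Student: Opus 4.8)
The plan is to establish the three assertions of \tref{tInvariantization} in order, deriving everything from the basic properties of normal forms with respect to the reduced Gröbner basis $\mathcal{G}$, together with the information about $E$ contained in \tref{tLocalize}. First I would check well-definedness: if $b = f(a_1 \upto a_n) = g(a_1 \upto a_n)$ with $f, g \in K[y_1 \upto y_n]$, then $f - g$, viewed in $S[y_1 \upto y_n]$, vanishes under the substitution $y_i \mapsto a_i$, hence lies in the ideal $(y_1 - a_1 \upto y_n - a_n)$, which contains $D_{a_1 \upto a_n} \subseteq E$ only after we observe that $f-g$ itself lies in that single maximal-type ideal and in fact in $E$; more carefully, $f - g$ evaluated at the $a_i$ being zero means $f-g \in \ker(K[y] \to R)$, and one shows this kernel is contained in $E$ because $E \supseteq D_{a_1\upto a_n}$ and $D_{a_1\upto a_n} \cap K[y]$ maps onto the relation ideal of the $a_i$. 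Therefore $\NF_\mathcal{G}(f - g) = 0$, so $\NF_\mathcal{G}(f) = \NF_\mathcal{G}(g)$, and setting the $y_i$ to zero gives the same element. That $\phi_\mathcal{G}$ lands in $A$ is immediate, since the coefficients of $\NF_\mathcal{G}(f)$ are $K$-linear combinations of products of coefficients of elements of $\mathcal{G}$, hence lie in $A$.

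Next, $R^G$-linearity. Additivity is clear since $f \mapsto \NF_\mathcal{G}(f)$ is $K$-linear and evaluation at $0$ is linear. For the module structure: given $c \in R^G$, write $c = h(a_1 \upto a_n)$; I want $\phi_\mathcal{G}(c b) = c\, \phi_\mathcal{G}(b)$. The key point is that $c$, being invariant, satisfies $h(y_1 \upto y_n) - c \in E$. Indeed $h(y) - h(a)$ lies in $(y_1 - a_1 \upto y_n - a_n)$, and applying any $\sigma \in G$ and using $\sigma \cdot c = c$ shows $h(y) - c$ lies in every ideal $(y_1 - \sigma\cdot a_1 \upto y_n - \sigma \cdot a_n)$, hence in $D_{a_1\upto a_n} \subseteq E$. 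Consequently, modulo $E$ we have $h(y) \equiv c$, so $\NF_\mathcal{G}(h f) \equiv c\, f \equiv c\, \NF_\mathcal{G}(f) \pmod E$; since $c \in A \subseteq L^G$ and normal forms are $L$-linear up to reduction, a short argument gives $\NF_\mathcal{G}(hf) = c\cdot \NF_\mathcal{G}(f)$ as a genuine equality in the free module spanned by standard monomials (using that $\NF_\mathcal{G}$ is $L$-linear, as $E$ is an ideal of $L[y]$). Evaluating at $y = 0$ yields $\phi_\mathcal{G}(cb) = c\,\phi_\mathcal{G}(b)$. Applying this with $b = 1$ and $c \in R^G$ arbitrary shows $\phi_\mathcal{G}(c) = c\,\phi_\mathcal{G}(1)$, and since $1 \in K \subseteq L^G$ has $\NF_\mathcal{G}(1) = 1$ (as $E$ is proper, $1$ is already in normal form), we get $\phi_\mathcal{G}(1) = 1$, hence $\phi_\mathcal{G}$ restricts to the identity on $R^G$.

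For the final assertion, assume \eqref{eqLocalize}, i.e. $R_a^G = A_a$. Every element of $R_a$ has the form $b/a^m$ with $b \in R$, $m \ge 0$; I would define the extension $\widetilde\phi$ by $\widetilde\phi(b/a^m) := \phi_\mathcal{G}(b)/a^m \in A_a = R_a^G$, and check it is well-defined using the $R^G$-linearity already proved (if $b/a^m = b'/a^{m'}$ then $a^{m'} b = a^m b'$ in $R$, so applying $\phi_\mathcal{G}$ and using $a \in R^G$ gives $a^{m'}\phi_\mathcal{G}(b) = a^m \phi_\mathcal{G}(b')$). Then $\widetilde\phi$ is $R_a^G$-linear by the same linearity, restricts to the identity on $R_a^G$ because $\phi_\mathcal{G}$ does on $R^G$ and $A_a = R_a^G$, and hence is a projection $R_a \twoheadrightarrow R_a^G$; uniqueness of the $R_a^G$-linear extension is forced since $R_a$ is generated as an $R_a^G$-module (indeed as a ring) by images of elements of $R$. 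The existence of such a projection exhibits $R_a^G$ as a direct summand of $R_a$ as an $R_a^G$-module, namely $R_a = R_a^G \oplus \ker\widetilde\phi$.

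The main obstacle I anticipate is making rigorous the claim that $\NF_\mathcal{G}$ is genuinely $L$-linear as a map on $L[y_1\upto y_n]$ (not merely "linear up to further reduction"), and that multiplication by an element $c \in L^G \subseteq L$ commutes with $\NF_\mathcal{G}$; this is standard Gröbner basis theory over the field $L$ — the normal form is the unique representative in the $L$-span of standard monomials — but it must be invoked cleanly, and one must be careful that $\phi_\mathcal{G}$ is defined on $R$ (where coefficients live in $K$) while the normal form computation takes place over $L$, so the output a priori lies in $L$ and one needs the coefficient-bookkeeping argument to see it actually lies in $A$. Once that bookkeeping is pinned down, the rest is formal.
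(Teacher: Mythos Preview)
Your proposal is correct and follows essentially the same approach as the paper. The paper unifies things slightly by proving the single identity $\NF_\mathcal{G}(f) = \frac{b}{c}\,\NF_\mathcal{G}(g)$ whenever $b/c \in L^G$ (with $b = f(a_1\upto a_n)$, $c = g(a_1\upto a_n)$), from which well-definedness ($b=0$), the identity on $R^G$ ($c=1$), and $R^G$-linearity all drop out as special cases; but the underlying ingredients---showing the relevant polynomial lies in $D_{a_1\upto a_n}\subseteq E$, the $L$-linearity of $\NF_\mathcal{G}$, and the observation that reduction of an element of $A[y_1\upto y_n]$ by $\mathcal G\subseteq A[y_1\upto y_n]$ stays in $A[y_1\upto y_n]$---are exactly the ones you invoke.
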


The concept of invariantization was introduced by
\mycite{Fels:Olver:1999}, who used the term for a projection from the
set of smooth functions on an open subset of a manifold to the set of
local invariants under a group action. So the properties of our
map~$\phi_\mathcal{G}$ justify calling it invariantization. A
(different) algebraic version of invariantization was introduced by
\mycite{Hubert:Kogan:2007} in order to compute Fels and Olver's
invariantization in the case of algebraic functions (see Theorem~3.9
in~[\citenumber{Hubert:Kogan:2007}]).

\begin{proof}[Proof of
  Theorems~\ref{tInvariantField}, \ref{tLocalize}, and
  \ref{tInvariantization}]%
  $G$ acts on $L[y_1 \upto y_n]$ coefficient-wise. Hence for $\sigma
  \in G$ the set $\sigma \cdot \mathcal{G}$ is a reduced Gr\"obner
  basis of $\sigma \cdot E = E$. It follows from the uniqueness of
  reduced Gr\"obner bases (see \mycite[Theorem~5.43]{BW}) that $\sigma
  \cdot \mathcal{G} = \mathcal{G}$. Since the polynomials from
  $\mathcal{G}$ have pairwise distinct leading monomials, this implies
  that~$\sigma$ fixes every polynomial in $\mathcal{G}$, so
  $\mathcal{G} \subseteq L^G[y_1 \upto y_n]$. Hence $A \subseteq L^G$
  and $\Quot(A) \subseteq L^G$, which establishes two of the claimed
  inclusions in Theorems~\ref{tInvariantField} and~\ref{tLocalize}.

  Now let $b \in L^G$ and suppose that $E$ is tamely extended. The set
  $J := \{d \in R \mid b d \in R\} \subseteq R$ is a nonzero,
  $G$-stable ideal. Therefore $J \not\subseteq I$ (with $I$ the ideal
  from \dref{dDerksen}\eqref{dDerksenC}), since otherwise $J
  \subseteq \bigcap_{\sigma \in G} \sigma \cdot I$, which is nilpotent
  and therefore zero by hypothesis. So there exists $g \in K[y_1 \upto
  y_n] \setminus E$ such that $g(a_1 \upto a_n) \in J$. By the
  definition of $J$, this implies the existence of $f \in K[y_1 \upto
  y_n]$ with
  \begin{equation} \label{eqFrac}%
    b g(a_1 \upto a_n) = f(a_1 \upto a_n).
  \end{equation}
  Set $h := f - b g \in L[y_1 \upto y_n]$. For $\sigma \in G$, the
  $G$-invariance of~$b$ implies
  \[
  h(\sigma \cdot a_1 \upto \sigma \cdot a_n) = \sigma \cdot
  \bigl(f(a_1 \upto a_n) - b g(a_1 \upto a_n)\bigr) = 0.
  \]
  Therefore $h \in D_{a_1 \upto a_n} \subseteq E$ and, using the
  $L$-linearity of the normal form map, we conclude
  \begin{equation} \label{eqNormalForm}%
    0 = \NF_{\mathcal{G}}(h) = \NF_{\mathcal{G}}(f) - b
    \NF_{\mathcal{G}}(g).
  \end{equation}
  Since $g \notin E$ we have $\NF_{\mathcal{G}}(g) \ne 0$,
  so~\eqref{eqNormalForm} implies
  \[
  b = \frac{\NF_{\mathcal{G}}(f)}{\NF_{\mathcal{G}}(g)}.
  \]
  Since~$f$, $g$, and $\mathcal{G}$ are contained in $A[y_1 \upto
  y_n]$, we can see from the algorithm for computing normal forms (see
  \mycite[Algorithm~9.8]{Kemper.Comalg}) that also
  $\NF_{\mathcal{G}}(f),\NF_{\mathcal{G}}(g) \in A[y_1 \upto y_n]$, so
  the above equation tells us
  \[
  b \in \Quot(A[y_1 \upto y_n]) \cap L = \Quot(A).
  \]
  This completes the proof of \tref{tInvariantField}.

  For the proof of \tref{tLocalize} let $b \in R^G$,
  so~\eqref{eqFrac} holds with $g = 1$. Since $E$ is a proper ideal,
  we have have $\NF_{\mathcal{G}}(1) = 1$ and~\eqref{eqNormalForm}
  yields
  \begin{equation} \label{eqNFF}
    b = \NF_{\mathcal{G}}(f) \in A[y_1 \upto y_n] \cap L = A.
  \end{equation}
  Now both inclusions $R^G \subseteq A \subseteq L^G$ are
  established. Assume $A \subseteq R_a$ with $a \in R^G$ nonzero. Then
  \[
  R^G_a \subseteq A_a \subseteq R_a \cap L^G = R_a^G,
  \]
  so \tref{tLocalize} is established.

  We now turn our attention to \tref{tInvariantization}. Let $f \in
  K[y_1 \upto y_n]$ be a polynomial and $b := f(a_1 \upto a_n) \in
  R$. If $b = 0$, then $\NF_\mathcal{G}(f) = 0$
  by~\eqref{eqNFF}. This implies that $\phi_\mathcal{G}$ does not
  depend on the choice of the polynomial in $K[y_1 \upto y_n]$ that is
  used for its definition. If $b \in R^G$, it follows
  from~\eqref{eqNFF} that $\phi_\mathcal{G}(b) = b$. For $b \in R$
  not necessarily an invariant, we have already seen that
  $\NF_\mathcal{G}(f) \in A[y_1 \upto y_n]$, so $\phi_\mathcal{G}(b)
  \in A$.

  To prove that~$\phi_\mathcal{G}$ is a homomorphism of $R^G$-modules,
  take a further element $c = g(a_1 \upto a_n) \linebreak \in R \setminus \{0\}$
  such that $\frac{b}{c} \in L^G$. By~\eqref{eqNormalForm}, this
  implies $\NF_\mathcal{G}(f) = \frac{b}{c}\NF_\mathcal{G}(g)$, and
  therefore $\phi_\mathcal{G}(b) = \frac{b}{c}\phi_\mathcal{G}(c)$. In
  particular, $\phi_\mathcal{G}(r c) = r \phi_\mathcal{G}(c)$ for $r
  \in R^G$, and this also holds if $c = 0$. The additivity
  of~$\phi_\mathcal{G}$ follows from the additivity of the normal
  form. Finally, if~\eqref{eqLocalize} holds, then it is clear that
  $a^{-k} b \mapsto a^{-k} \phi_\mathcal{G}(b)$ gives a well-defined
  map that uniquely extends~$\phi_\mathcal{G}$ to an $R_a^G$-linear
  map. It also follows that this extension is the identity on $R_a^G$
  and that its image is $R_a^G$.
\end{proof}

We now give two examples of (extended) Derksen ideals and the
corresponding invariantization maps.

\begin{ex} \label{exInvariantization}%
  In this example $K$ is assumed to be a field, and algebraically
  closed in~\eqref{exInvariantization1}.
  \begin{enumerate}
    \renewcommand{\theenumi}{\arabic{enumi}}
  \item \label{exInvariantization1} Consider the action of the
    multiplicative group $G = \Gm$ on the polynomial ring $R =
    K[x_1,x_2]$ with weight $(1,-1)$. With $L = K(x_1,x_2)$, the
    Derksen ideal is
    \[
    D_{x_1,x_2} = \bigl(y_1 y_2 - x_1 x_2\bigr) \subseteq L[y_1,y_2],
    \]
    with a reduced Gr\"obner basis $\mathcal{G}$ already
    displayed. Since $y_1$ and $y_2$ are their own normal forms, we
    get $\phi_\mathcal{G}(x_1) = \phi_\mathcal{G}(x_2) = 0$. However,
    $y_1 y_2$ has the normal form $x_1 x_2$, so $\phi_\mathcal{G}(x_1
    x_2) = x_1 x_2$. (This also follows since $x_1 x_2$ is an
    invariant.) We see that $\phi_\mathcal{G}$ is not in general a
    homomorphism of rings. We can also consider the extended Derksen
    ideal
    \[
    E = D_{x_1,x_2} + \bigl(y_1 - 1\bigr) = \bigl(y_1 - 1,y_2 - x_1
    x_2\bigr) \subseteq L[y_1,y_2].
    \]
    Using the reduced Gr\"obner basis $\mathcal{G}'$ of this, we
    obtain $\phi_{\mathcal{G}'}(x_1) = 1$. This shows that the
    invariantization map may depend on the choice of the extended
    Derksen ideal.
  \item Consider the finite symmetric group $G = S_2$ with its natural
    action on $R = K[x_1,x_2]$. With $L = K(x_1,x_2)$, the Derksen
    ideal is
    \[
    D_{x_1,x_2} = \bigl(y_1 + y_2 - x_1 - x_2,y_2^2 - (x_1 + x_2) y_2
    + x_1 x_2\bigr) \subseteq L[y_1,y_2].
    \]
    With respect to a monomial ordering with $y_1 > y_2$, the
    displayed basis $\mathcal{G}$ is the reduced Gr\"obner basis. We
    obtain $\phi_\mathcal{G}(x_1) = x_1 + x_2$ and
    $\phi_\mathcal{G}(x_2) = 0$. This shows that~$\phi_\mathcal{G}$ is
    not $G$-equivariant, and in particular, it does not coincide with
    the Reynolds operator (which exists if $\ch(K) \ne 2$). Moreover,
    if we had chosen a monomial ordering with $y_2 > y_1$, the
    Gr\"obner basis would have changed in such a way that $x_1$ would
    be sent to~$0$ and $x_2$ to $x_1 + x_2$. So we see that even when
    one fixes an extended Derksen ideal, the invariantization map may
    depend on the chosen monomial ordering. \exend
  \end{enumerate}
  \renewcommand{\exend}{}
\end{ex}

See \rref{rInvariantization} for a variant of the invariantization
map that does not depend on the choice of the monomial ordering.

Suppose that in the situation of \tref{tLocalize} the
equality~\eqref{eqLocalize} holds. Then by \tref{tInvariantization},
$R_a^G$ is a direct summand of $R_a$. Now we can use a result by
\mycite{Hochster:Huneke:1995}, which tells us that if $R_a$ is a
regular ring and if $R_a^G$ contains a field, then $R_a^G$ is
Cohen--Macaulay. So \tref{tInvariantization} has the following
consequence:

\begin{cor} \label{cCM}%
  Under the assumption~\ref{Assumption} suppose that there exists a
  nonzero $a \in R^G$ with $A \subseteq R_a$. If $R_a$ is regular and
  $R_a^G$ contains a field, then $R_a^G$ is Cohen--Macaulay.

  Notice that all the hypotheses of this corollary are satisfied if
  $G$ is a linear algebraic group which has no surjective homomorphism
  onto $\Gm$ (see \tref{tItalian}\eqref{tItalianB}) and $R = K[V]$
  with $V$ a $G$-module.
\end{cor}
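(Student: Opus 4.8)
The plan is to read off the Cohen--Macaulay conclusion from Theorems~\ref{tLocalize} and~\ref{tInvariantization} combined with the theorem of \citename{Hochster:Huneke:1995} quoted just before the corollary, and then to check, as a separate and essentially routine matter, that the three hypotheses hold automatically in the stated geometric situation.

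For the main assertion: by hypothesis there is a nonzero $a \in R^G$ with $A \subseteq R_a$, so \tref{tLocalize} applies and gives $R_a^G = A_a$, i.e., equation~\eqref{eqLocalize} holds. Hence \tref{tInvariantization} shows that the invariantization map extends to an $R_a^G$-linear projection $R_a \twoheadrightarrow R_a^G$; in particular $R_a^G$ is a direct summand of $R_a$ as an $R_a^G$-module. Since $R_a$ is regular and $R_a^G$ contains a field by hypothesis, the result of \citename{Hochster:Huneke:1995} on direct summands of regular rings applies and shows that $R_a^G$ is Cohen--Macaulay. This is exactly the consequence of \tref{tInvariantization} announced in the paragraph preceding the corollary, so essentially nothing new has to be proved here.

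For the supplementary remark, assume that $G$ is a linear algebraic group over the ground field $K$ (algebraically closed, as in the later sections) having no surjective homomorphism onto $\Gm$, and that $R = K[V] = K[x_1 \upto x_n]$. Then $R$ is a polynomial ring, hence a regular ring, so its localization $R_a$ is regular for every nonzero $a$. Also $K$ is a field contained in $R_a^G$, so $R_a^G$ contains a field. It remains to produce the element $a$, and for this I would invoke \tref{tItalian}\eqref{tItalianB}, which under the hypothesis on $G$ yields $\Quot(R^G) = L^G$. Now $A$ is a finitely generated $K$-algebra, being generated by the finitely many coefficients of the polynomials in $\mathcal{G}$, and $A \subseteq L^G = \Quot(R^G)$. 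Writing each of the finitely many algebra generators of $A$ as a fraction of two elements of $R^G$ and letting $a \in R^G$ be the product of the denominators (nonzero, since $R^G \subseteq L$ is a domain), we obtain $A \subseteq R^G_a \subseteq R_a$, as required.

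I do not anticipate a genuine obstacle once Theorems~\ref{tLocalize}, \ref{tInvariantization} and~\ref{tItalian} are on hand: the corollary is little more than bookkeeping. The two external ingredients are the theorem of \citename{Hochster:Huneke:1995} --- where one must make sure the hypotheses line up exactly (an $R_a^G$-module direct summand of a regular ring that contains a field) --- and, for the remark, the forward reference to \tref{tItalian}\eqref{tItalianB}. The real content for the remark sits in the latter, namely in establishing $\Quot(R^G) = L^G$ when $G$ has no surjective homomorphism onto $\Gm$; that result is proved later and may be assumed here.
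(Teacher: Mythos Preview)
Your argument is correct and matches the paper's approach: the paragraph immediately preceding the corollary already explains that the existence of~$a$ yields~\eqref{eqLocalize} via \tref{tLocalize}, whence \tref{tInvariantization} makes $R_a^G$ a direct summand of $R_a$, and the Hochster--Huneke theorem finishes the main assertion. For the supplementary remark you correctly invoke \tref{tItalian}\eqref{tItalianB} to obtain $L^G = \Quot(R^G)$ and then reproduce the easy argument (recorded in the paper as \pref{pRa}) that a common denominator of the generators of $A$ furnishes the required~$a$; the regularity of $R_a$ and the presence of the field $K$ in $R_a^G$ are immediate since $R = K[V]$ is a polynomial ring.
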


In the situation given by Assumption~\ref{Assumption}, the first
inclusion from \tref{tLocalize} tells us that every invariant from
$R^G$ can be written as a polynomial (over $K$) in the coefficients
occurring in the polynomials from $\mathcal{G}$. The next result deals
with finding such a polynomial explicitly. Instead of considering the
set $\mathcal{S}$ of all coefficients of polynomials in $\mathcal{G}$,
it is often useful to choose some invariants~$b_i$ such that all
elements of $\mathcal{S}$ can be expressed as polynomials in
the~$b_i$. More formally, assume that we have a map $\map{\psi}{K[t_1
  \upto t_r]}{L^G}$ of $K$-algebras, with $K[t_1 \upto t_r]$ a
polynomial ring, whose image contains $A$. Choosing preimages of all
coefficients of the polynomials in $\mathcal{G}$, we form a set
$\mathcal{G}_t \subseteq K[t_1 \upto t_r,y_1 \upto y_n]$ such that
$\psi(\mathcal{G}_t) = \mathcal{G}$. (Here we extend~$\psi$ to $K[t_1
\upto t_r,y_1 \upto y_n]$ by sending each $y_i$ to itself.) The
following theorem gives an invariance test and an algorithm for
rewriting an invariant in terms of the $b_i := \psi(t_i)$.

\begin{theorem}[Rewriting invariants] \label{tRewrite}%
  With the above notation, let $b = f(a_1 \upto a_n) \in R$ with $f
  \in K[y_1 \upto y_n]$, let $g \in K[t_1 \upto t_r,y_1 \upto y_n]$ be
  a normal form of~$f$ with respect to $\mathcal{G}_t$, and obtain
  $g_0 \in K[t_1 \upto t_r]$ by setting all $y_i$ equal to zero
  in~$g$. Then $b \in R^G$ if and only if $b = \psi(g_0)$. In this
  case, $\psi(g_0)$ expresses~$b$ as a polynomial in the invariants
  $b_i = \psi(t_i)$.
\end{theorem}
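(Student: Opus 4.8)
The plan is to reduce everything to the computations already carried out in the proof of Theorems~\ref{tInvariantField}--\ref{tInvariantization}, transported through the chosen presentation $\psi$. First I would observe that the normal form computation commutes with $\psi$: since $\psi$ fixes the $y_i$ and the reduced Gröbner basis $\mathcal{G}_t$ maps onto $\mathcal{G}$, applying $\psi$ to the Gröbner reduction of $f$ modulo $\mathcal{G}_t$ produces a valid reduction of $f$ modulo $\mathcal{G}$ in $A[y_1 \upto y_n]$. Because normal forms with respect to a \emph{reduced} Gröbner basis are unique, this gives $\psi(g) = \NF_\mathcal{G}(f)$, and hence, setting all $y_i$ to zero, $\psi(g_0) = \bigl(\NF_\mathcal{G}(f)\bigr)(0 \upto 0) = \phi_\mathcal{G}(b)$. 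So the theorem is really the assertion that $b \in R^G$ if and only if $b = \phi_\mathcal{G}(b)$.

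That equivalence now follows from \tref{tInvariantization}. If $b \in R^G$, then $\phi_\mathcal{G}$ restricts to the identity on $R^G$, so $b = \phi_\mathcal{G}(b) = \psi(g_0)$. Conversely, if $b = \psi(g_0)$, then $b$ lies in the image of $\psi$, which by hypothesis is contained in $L^G$; combined with $b \in R$ this forces $b \in R \cap L^G = R^G$. The final sentence of the theorem is immediate: once $b \in R^G$ and $b = \psi(g_0)$, the element $g_0 \in K[t_1 \upto t_r]$ is by construction a polynomial expression, and applying $\psi$ sends $t_i$ to $b_i$, so $b = \psi(g_0)$ literally writes $b$ as a polynomial in the $b_i$.

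The one step that needs genuine care — and I expect it to be the main obstacle — is the claim that $\psi$ intertwines the two normal form computations, i.e.\ that reducing modulo $\mathcal{G}_t$ and then applying $\psi$ yields the reduction modulo $\mathcal{G}$. The subtlety is that $\mathcal{G}_t$ need not itself be a Gröbner basis (or even a particularly well-behaved set) in $K[t_1 \upto t_r, y_1 \upto y_n]$, so ``a normal form of $f$ with respect to $\mathcal{G}_t$'' must be read as the output of the division algorithm using $\mathcal{G}_t$, and one must check that each division step, after applying $\psi$, corresponds to a legitimate division step against $\mathcal{G}$ with respect to the monomial ordering on the $y$-variables alone. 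Here one uses that $f$ and all elements of $\mathcal{G}_t$ are polynomials in the $y_i$ with coefficients in $K[t_1 \upto t_r]$, that the leading $y$-monomials of $\mathcal{G}_t$ match those of $\mathcal{G}$, and that $\psi$ is a $K$-algebra homomorphism fixing the $y_i$; then the reduction against $\mathcal{G}_t$ terminates precisely because its $\psi$-image reduction against $\mathcal{G}$ does, and the two results agree by uniqueness of $\NF_\mathcal{G}$. Once this bookkeeping is in place, the rest is the direct application of \tref{tInvariantization} sketched above.
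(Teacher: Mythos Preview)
Your proposal is correct and follows essentially the same route as the paper: establish $\psi(g) = \NF_\mathcal{G}(f)$, deduce $\psi(g_0) = \phi_\mathcal{G}(b)$, and then invoke \tref{tInvariantization}. The paper compresses all of this into two lines (declaring the first step ``easy to see''), whereas you spell out both directions of the equivalence and flag the bookkeeping needed to transport the division algorithm through $\psi$; this extra care is warranted and does not diverge from the paper's argument.
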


\begin{proof}
  It is easy to see that $\psi(g) = \NF_\mathcal{G}(f)$. This implies
  $\psi(g_0) = \phi_\mathcal{G}(f)$, so \tref{tRewrite} follows from
  \tref{tInvariantization}.
\end{proof}

For invariant fields, similar rewriting algorithms were given by
\mycite{HK:06} and \mycite{Kemper.Rosenlicht.07}.

We now come back to \tref{tLocalize} and ask how it can be used to
calculate $R^G$. If~\eqref{eqLocalize} holds, we can assume~$a$ to be
one of the generators of $A$ and then multiply every generator of $A$
by a suitable power of~$a$ to obtain an element of $R^G$. This
produces a subalgebra $B \subseteq R^G$ that still satisfies $R_a^G =
B_a$. The following theorem deals with how to extract the invariant
ring $R^G$ in this situation.

\begin{theorem}[Invariant ring] \label{tInvariantRing}%
  In the situation~\ref{Assumption}, assume that $B \subseteq R^G$ is
  a finitely generated $K$-subalgebra such that $R_a^G = B_a$ with $a
  \in B$ nonzero. Define an ascending chain of subalgebras
  $B_0,B_1,B_2, \ldots \subseteq R$ by setting $B_0 := B$ and taking
  $B_{k+1}$ to be the subalgebra generated by $a^{-1} B_k \cap
  R$. Then
  \[
  R^G = \bigcup_{k=0}^\infty B_k.
  \]
  If $K$ is Noetherian, then all $B_k$ are finitely generated (as
  $K$-algebras). If $B_k = B_{k+1}$ for some~$k$, then $R^G = B_k$. If
  $R^G$ is finitely generated then such a~$k$ exists.
\end{theorem}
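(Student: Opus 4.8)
The plan is to prove the four assertions in turn, after first recording two structural observations about the chain $B_0 \subseteq B_1 \subseteq \cdots$. First, the chain is genuinely ascending: since $a \in B = B_0 \subseteq B_k$, any $b \in B_k$ satisfies $ab \in B_k$, hence $b = a^{-1}(ab) \in a^{-1}B_k \cap R$, so $B_k \subseteq B_{k+1}$. Second, every $B_k$ lies in $R^G$: this holds for $B_0$ by hypothesis, and inductively, since $L^G$ is a subfield of $L$ containing both $a$ and $B_k$, we get $a^{-1}B_k \subseteq L^G$ and therefore $a^{-1}B_k \cap R \subseteq L^G \cap R = R^G$, so the generators of $B_{k+1}$ are invariant. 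In particular $\bigcup_k B_k \subseteq R^G$.

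For the reverse inclusion, let $b \in R^G$. By hypothesis $b \in R^G \subseteq R_a^G = B_a = B[a^{-1}]$, so $a^N b \in B = B_0$ for some $N \ge 0$. Now I would run a descending induction on the exponent: if $a^j b \in B_{N-j}$ with $j \ge 1$, then $a^{j-1}b = a^{-1}(a^j b)$ lies in $a^{-1}B_{N-j}$, and it also lies in $R$ (being a product of elements of $R$), hence $a^{j-1}b \in a^{-1}B_{N-j} \cap R \subseteq B_{N-j+1}$. After $N$ steps this yields $b \in B_N$, which together with the previous paragraph proves the equality $R^G = \bigcup_k B_k$.

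Next I would treat the finite-generation statement, which I expect to be the one real technical point. Arguing by induction on $k$: $B_0 = B$ is a finitely generated $K$-algebra by assumption, and if $B_k$ is one, then it is Noetherian by Hilbert's basis theorem (this is where $K$ Noetherian is used). Multiplication by the nonzerodivisor $a$ maps $a^{-1}B_k \cap R$ bijectively onto $B_k \cap aR$, which is an ideal of $B_k$ and hence finitely generated, say by $c_1 \upto c_s$; writing $c_i = a c_i'$ with $c_i' \in a^{-1}B_k \cap R$, one checks readily that $a^{-1}B_k \cap R = \sum_{i=1}^s B_k c_i'$ as a $B_k$-module, and consequently $B_{k+1} = B_k[c_1' \upto c_s']$ is again a finitely generated $K$-algebra. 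The subtlety is only that the a priori infinite generating set $a^{-1}B_k \cap R$ of $B_{k+1}$ is cut down to finitely many generators, and this is exactly where Noetherianity enters, through the ideal $B_k \cap aR$ of $B_k$.

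Finally, the two stabilization claims are formal consequences. If $B_k = B_{k+1}$ for some $k$, then the definition of $B_{j+1}$ as the $K$-subalgebra generated by $a^{-1}B_j \cap R$ immediately gives $B_{k+j} = B_k$ for all $j \ge 0$ by induction, so $R^G = \bigcup_j B_j = B_k$. Conversely, if $R^G$ is finitely generated, pick generators $d_1 \upto d_t$; since $R^G = \bigcup_j B_j$, each $d_i$ lies in some $B_{k_i}$, so for $k := \max_i k_i$ we have $R^G \subseteq B_k \subseteq R^G$, i.e. $B_k = R^G$. Then $a^{-1}B_k \cap R = a^{-1}R^G \cap R \subseteq L^G \cap R = R^G = B_k$, which forces $B_{k+1} \subseteq B_k$ and hence $B_{k+1} = B_k$, exhibiting the required index.
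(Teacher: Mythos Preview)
Your proof is correct and follows essentially the same approach as the paper's own proof. The only cosmetic difference is that where the paper introduces the auxiliary sets $B^{(k)} := a^{-k} B \cap R$ and proves $B^{(k)} \subseteq B_k$ by induction, you bypass this and run the descending induction directly on the element $b$; the finite-generation and stabilization arguments are identical to the paper's.
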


\begin{proof}
  We consider the sets $B^{(k)} := a^{-k} B \cap R$ and show by
  induction on~$k$ that $B^{(k)} \subseteq B_k \subseteq R^G$ and that
  $B_k$ is finitely generated if $K$ is Noetherian. This is true for
  $k = 0$, and, using the induction hypothesis, we have
  \[
  B^{(k+1)} = a^{-1}\bigl(a^{-k} B \cap R \cap a R\bigr) = a^{-1}
  B^{(k)} \cap R \subseteq a^{-1} B_k \cap R \subseteq B_{k+1}.
  \]
  Moreover, $B_k \subseteq R^G$ implies $a^{-1} B_k \cap R \subseteq
  L^G \cap R = R^G$, so $B_{k+1} \subseteq R^G$. Finally, if $K$ is
  Noetherian, then so is $B_k$. Therefore $B_k \cap a R \subseteq B_k$
  is a finitely generated ideal, and so $a^{-1} B_k \cap R$ is a
  finitely generated $B_k$-module. This implies that $B_{k+1}$ is
  finitely generated as a $B_k$-algebra and therefore also as a
  $K$-algebra.

  Now we can see that our hypothesis $R^G_a = B_a$ implies
  \[
  R^G \subseteq \bigcup_{k=0}^\infty B^{(k)} \subseteq
  \bigcup_{k=0}^\infty B_k \subseteq R^G.
  \]
  Clearly $B_k = B_{k+1}$ implies $B_k = B_i$ for all $i \ge k$ and
  therefore $B_k = R^G$. Now suppose that $R^G$ is finitely
  generated. Then all generators of $R^G$ are contained in some $B_k$,
  so
  \[
  R^G \subseteq B_k \subseteq B_{k+1} \subseteq R^G.
  \]
  This finishes the proof.
\end{proof}

Now we turn \tref{tInvariantRing} into a procedure. Unsurprisingly,
this will involve Gr\"obner basis computations. These are possible if
$K$ is a field, but also over certain rings. In fact, we need to
assume that $K$ is Noetherian and that there is an algorithm for
computing all solutions $(c_1 \upto c_r) \in K^r$ of a linear equation
$a_1 c_1 + \cdots + a_r c_r = b$ with $b,a_i \in K$ (see
\mycite[Algorithm~4.2.1]{AL}; Chapter~4 of this book gives a nice
introduction to Gr\"obner bases over rings). Rings with these
properties are sometimes called \df{Zacharias rings} (see, for example
\mycite{Mora:2005}). As an example, all Euclidean rings are Zacharias
rings. As the usual Buchberger algorithm, Algorithm~4.2.1
from~[\citenumber{AL}] can easily be modified to give representations
of the Gr\"obner basis elements as linear combinations of the original
basis elements. By means of Gr\"obner bases one also gets a test for
membership in an ideal $I \subseteq K[y_1 \upto y_n]$ in a polynomial
ring (see~[\citenumber{AL}, Theorem~4.1.12 and Corollary~4.1.16]) and
an algorithm for computing elimination ideals $K[y_1 \upto y_k] \cap
I$ (see~[\citenumber{AL}, Theorem~4.3.6]). With this, our toolbox of
Gr\"obner basis techniques is prepared for turning
\tref{tInvariantRing} into a procedure.

Since the sequence of subalgebras $B_k$ in the theorem terminates only
if the invariant ring is finitely generated, the procedure is a
semi-algorithm in the sense that it need not terminate after finitely
many steps. Semi-algorithm~\ref{aSemi} has appeared in a less explicit
and less general form in \mycite{essen}.

\begin{salg}[``Unlocalizing'' the invariant ring] \label{aSemi} \mbox{}%
  \begin{description}
  \item[\bf Input:] Given the situation of
    Assumption~\ref{Assumption}, the procedure needs:
    \begin{itemize}
    \item the kernel $I$ of the map $K[y_1 \upto y_n] \to R$, $y_i
      \mapsto a_i$,
    \item a subalgebra $B \subseteq R$ generated by elements
      $\overline{f}_i := f_i(a_1 \upto a_n)$ with $f_i \in K[y_1
      \upto y_n]$, $i = 1 \upto k$, and
    \item a nonzero element $a \in B$ such that $R^G_a = B_a$.
    \end{itemize}
    It is required that $K$ is a Zacharias ring.
  \item[\bf Output:] Generators of $R^G$ as a $K$-algebra. The ideal
    of relations between the generators is also computed. The
    procedure terminates after finitely many steps if and only if
    $R^G$ is finitely generated.
  \end{description}
  \begin{enumerate}
    \renewcommand{\theenumi}{\arabic{enumi}}
  \item Set $m := k$.
  \item \label{aSemi2} This step is optional. Substitute
    $\{\overline{f}_1 \upto \overline{f}_m\}$ by a (smaller) subset of
    $R$ generating the same subalgebra.
  \item \label{aSemi3} With additional indeterminates $z_1 \upto z_m$,
    let $\widehat{L} \subseteq K[y_1 \upto y_n,z_1 \upto z_m]$ be the
    ideal generated by $I$ and $z_i - f_i$ ($i = 1 \upto
    m$). Moreover, let $\widehat{M} \subseteq K[y_1 \upto y_n,z_1
    \upto z_m]$ be the ideal generated by $\widehat{L}$ and a
    polynomial $g \in K[z_1 \upto z_m]$ with $g(\overline{f}_1 \upto
    \overline{f}_m) = a$.
  \item Compute the elimination ideal $M := K[z_1 \upto z_m] \cap
    \widehat{M}$. Let $L \subseteq K[z_1 \upto z_m]$ be the ideal
    generated by the elimination ideal $J := K[z_1 \upto z_m] \cap
    \widehat{L}$ and by~$g$.
  \item \label{aSemi5} If $M \subseteq L$, return $\overline{f}_1
    \upto \overline{f}_m$ as the desired generators for $R^G$ and
    return $J$ as the ideal of relations between them. The inclusion
    $M \subseteq L$ can be tested by performing membership tests on
    the generators of $M$.
  \item \label{aSemi6} Choose $h_1 \upto h_r \in M$ such that $L$
    together with the $h_i$ generates $M$. Compute a Gr\"obner basis
    of the ideal $J' \subseteq K[y_1 \upto y_n]$ generated by $I$ and
    $g(f_1 \upto f_m)$, together with representations of the Gr\"obner
    basis elements as linear combinations of the original basis
    elements.
  \item \label{aSemi7} For each $i \in \{1 \upto r\}$, reduce $h_i(f_1
    \upto f_m)$ with respect to the Gr\"obner basis of $J'$ (which
    will yield zero) and then express $h_i(f_1 \upto f_m)$ as a linear
    combination of the ideal basis of $I$ and $g(f_1 \upto f_m)$. This
    yields $f_{m+i} \in K[y_1 \upto y_n]$ such that
    \begin{equation} \label{eqFmi}%
      g(\overline{f}_1 \upto \overline{f}_m) \cdot \overline{f}_{m+i}
      = h_i(\overline{f}_1 \upto \overline{f}_m).
    \end{equation}
  \item \label{aSemi8} Set $m := m + r$ and go to step~\ref{aSemi2}.
  \end{enumerate}
\end{salg}

\begin{proof}[Proof of correctness of Semi-algorithm~\ref{aSemi}]
  Let $\widehat{B} = K\bigl[\overline{f}_1 \upto \overline{f}_m\bigr]
  \subseteq R$, with $f_1 \upto f_m$ as in step~\ref{aSemi3}, possibly
  after having performed steps~\ref{aSemi2}--\ref{aSemi8} several
  times. For $h \in K[z_1 \upto z_m]$, it is easy to verify the
  equivalences
  \begin{equation} \label{eqEquiv}%
    h(\overline{f}_1 \upto \overline{f}_m) \in aR \quad
    \Longleftrightarrow \quad h \in M
  \end{equation}
  and
  \[
  h(\overline{f}_1 \upto \overline{f}_m) \in a \widehat{B} \quad
  \Longleftrightarrow \quad h \in L.
  \]
  It follows that $M/L \cong \bigl(\widehat{B} \cap a R\bigr)/a
  \widehat{B} \cong \bigl(a^{-1} \widehat{B} \cap
  R\bigr)/\widehat{B}$, so step~\ref{aSemi5} tests $a^{-1} \widehat{B}
  \cap R \subseteq \widehat{B}$. If this is not the case, the
  algorithm reaches step~\ref{aSemi6}, and~\eqref{eqEquiv} guarantees
  the existence of polynomials $f_{m+i} \in K[y_1 \upto y_n]$
  satisfying~\eqref{eqFmi}. This implies $h_i(f_1 \upto f_m) \in J'$,
  so the reduction of $h_i(f_1 \upto f_m)$ will yield zero, as
  claimed, and $f_{m+i}$ is found in
  step~\ref{aSemi7}. From~\eqref{eqFmi} and the choice of $h_i$ in
  step~\ref{aSemi6} it follows that the $f_{m+i} + I$ generate $a^{-1}
  \widehat{B} \cap R$ as a $\widehat{B}$-module. So the algorithm goes
  back to step~\ref{aSemi2} with $\widehat{B}$ replaced by the algebra
  generated by $a^{-1} \widehat{B} \cap R$. It follows that the
  algorithm produces the same ascending chain of subalgebras $B_k$ of
  $R$ that is dealt with in \tref{tInvariantRing}, and the correctness
  of the termination condition in step~\ref{aSemi5} follows from that
  theorem.
\end{proof}

\begin{rem*}
  In fact, Semi-algorithm~\ref{aSemi} (and \tref{tInvariantRing}) work
  in the following, more general situation: $R$ is a finitely
  generated algebra over a Zacharias ring $K$, and $a \in B \subseteq
  R$ is an element of a finitely generated subalgebra such that
  multiplication by~$a$ is injective on $R$. Then the procedure
  computes $R \cap B_a$. 
\end{rem*}

\section{Finite groups} \label{sFinite}

In this section we consider the special case of finite groups. In the
situation given by Assumption~\ref{Assumption}, let us assume that $G$
is finite. This has the following beneficial consequences:

\begin{itemize}
\item Since the Derksen ideal is a finite intersection of ideals in
  $L[x_1 \upto x_n]$, we have an algorithm for computing it (see
  \mycite[Algorithm~6.3]{BW}).
\item From the Derksen ideal we can compute a localization $R^G_a$ of
  the invariant ring by using \tref{tLocalize}, since a nonzero
  invariant $a \in R^G$ as in the theorem exists (see the remark after
  \exref{exC2}).
\item If $K$ is Noetherian, then $R^G$ is finitely generated by
  \mycite{noe:c}. So Semi-algorithm~\ref{aSemi} will terminate after
  finitely many steps and compute the invariant ring $R^G$, provided
  that $K$ is a Zacharias ring.
\end{itemize}

So we get the following algorithm:

\begin{alg}[Invariant ring of a finite group acting on a domain over a
  Zacharias ring] \label{aFinite} \mbox{}%
   \begin{description}
   \item[\bf Input:] A prime ideal $I \subset K[x_1 \upto x_n]$ in a
     polynomial ring over a Zacharias ring $K$ (e.g., a Euclidean
     ring) with $K \cap I = \{0\}$, and a finite group of
     automorphisms of $R := K[x_1 \upto x_n]/I$.
  \item[\bf Output:] A finite set of generators of the invariant ring
    $R^G$ as a $K$-algebra.
  \end{description}
  \begin{enumerate}
    \renewcommand{\theenumi}{\arabic{enumi}}
  \item \label{aFinite1} With $a_i := x_i + I \in R$ and $L :=
    \Quot(R)$, compute a reduced Gr\"obner basis $\mathcal{G}$ of the
    Derksen ideal $D_{a_1 \upto a_n} \subseteq L[y_1 \upto y_n]$. This
    can be done by using Algorithm~6.3 from~[\citenumber{BW}].
  \item \label{aFinite2} Choose a nonzero invariant $a \in R^G$ such
    that $a^k \cdot \mathcal{G} \subseteq R[y_1 \upto y_n]$ holds for
    some~$k$.
  \item Let $S$ be the set whose elements are the coefficients of all
    polynomials in $\mathcal{G}$. Change $S$ by multiplying each
    element by a suitable power of~$a$ to obtain an element from
    $R$. Let $B \subseteq R^G$ be the subalgebra generated by~$a$ and
    $S$.
  \item Apply Semi-algorithm~\ref{aSemi} to obtain the desired
    generators of $R^G$. This is guaranteed to terminate after
    finitely many steps.
  \end{enumerate}
\end{alg}

Since it is necessary to compute a Gr\"obner basis of the Derksen
ideal, it may seem that already step~\ref{aFinite1} of the algorithm
cannot be ``controlled.'' However, the following proposition shows
that under the very mild hypothesis that $K$ is infinite, the
Gr\"obner basis is in fact very much under control. Indeed, if $K$ is
infinite (which is always the case if $K$ is not a field), then by
picking a suitable $K$-linear combination of the generators $a_i$ we
can produce an element of $R$ that is fixed by no other element from
$G$ but the identity. By taking this as an additional generator (if
necessary) we can therefore achieve that one of the generators, say
$a_1$, has trivial stabilizer subgroup $G_{a_1} \subseteq G$. So the
following proposition is applicable if $K$ is infinite.

\begin{prop} \label{pFinite}%
  In the situation given by Assumption~\ref{Assumption}, assume that
  $G$ is finite and $a_1$ has trivial stabilizer $G_{a_1} =
  \{\id\}$. Then the polynomials
  \[
  f_1 = \prod_{\sigma \in G} (y_1 - \sigma \cdot a_1) \quad \text{and}
  \quad f_i = y_i - \sum_{\sigma \in G} (\sigma \cdot a_i) \cdot
  \!\!\!  \prod_{\tau \in G \setminus \{\sigma\}} \frac{y_1 - \tau
    \cdot a_1}{\sigma \cdot a_1 - \tau \cdot a_1} \quad (2 \le i \le
  n)
  \]
  form a reduced Gr\"obner basis of $D_{a_1 \upto a_n} \subseteq L[y_1
  \upto y_n]$ with respect to every monomial ordering with $x_1^k <
  x_i$ for $i,k > 1$. With $a := \discr(f_1) \in R^G \setminus \{0\}$
  (the discriminant of $f_1$ as a polynomial in $y_1$), we obtain $a
  \cdot f_i \in R[y_1 \upto y_n]$ for all~$i$.
\end{prop}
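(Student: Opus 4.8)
The plan is to argue in four stages: first verify that each $f_i$ lies in $D_{a_1 \upto a_n}$; then read off the leading monomials of the $f_i$ and apply Buchberger's product criterion to see that $\{f_1 \upto f_n\}$ is a Gr\"obner basis of the ideal $J$ it generates; then use a dimension count to identify $J$ with $D_{a_1 \upto a_n}$ and check reducedness directly; and finally settle the divisibility claim via the factorization of the discriminant. For the first stage, recall that under Assumption~\ref{Assumption} each $\mathfrak{m}_\sigma := (y_1 - \sigma\cdot a_1 \upto y_n - \sigma\cdot a_n)$ is the (maximal) kernel of the evaluation homomorphism $\varepsilon_\sigma\colon L[y_1 \upto y_n]\to L$, $y_i\mapsto\sigma\cdot a_i$, and $D_{a_1 \upto a_n} = \bigcap_{\sigma\in G}\mathfrak{m}_\sigma$. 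Since $G_{a_1} = \{\id\}$, the elements $\sigma\cdot a_1$ ($\sigma\in G$) are pairwise distinct, so the $\varepsilon_\sigma$, and hence the maximal ideals $\mathfrak{m}_\sigma$, are pairwise distinct. Now $\varepsilon_\sigma(f_1) = \prod_{\tau\in G}(\sigma\cdot a_1 - \tau\cdot a_1) = 0$ because of the factor at $\tau = \sigma$, and for $2\le i\le n$ the polynomial $f_i$ is built from the Lagrange basis attached to the nodes $\sigma\cdot a_1$, so $\varepsilon_\sigma(f_i) = \sigma\cdot a_i - \sigma\cdot a_i = 0$. Thus $f_i\in\mathfrak{m}_\sigma$ for all $\sigma$, so $f_i\in D_{a_1 \upto a_n}$ and $J := (f_1 \upto f_n)\subseteq D_{a_1 \upto a_n}$.

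For the next stages, expanding $f_1 = \prod_{\sigma}(y_1 - \sigma\cdot a_1)$ shows it is monic of degree $|G|$ in $y_1$ and involves no other variable, so its leading monomial is $y_1^{|G|}$; and since by the hypothesis on the monomial ordering $y_i$ exceeds every power of $y_1$ while $f_i - y_i$ is a polynomial in $y_1$ of degree $<|G|$, the leading monomial of $f_i$ is $y_i$, with coefficient $1$, for $2\le i\le n$. The leading monomials $y_1^{|G|}, y_2 \upto y_n$ are pairwise coprime, so Buchberger's product criterion (see \mycite{BW}) shows that all $S$-polynomials among the $f_i$ reduce to zero, whence $\{f_1 \upto f_n\}$ is a Gr\"obner basis of $J$. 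The associated standard monomials are exactly $1, y_1 \upto y_1^{|G|-1}$, so $\dim_L L[y_1 \upto y_n]/J = |G|$. On the other hand, since the $\mathfrak{m}_\sigma$ form a family of $|G|$ distinct maximal ideals, the Chinese remainder theorem gives $L[y_1 \upto y_n]/D_{a_1 \upto a_n}\cong L^{|G|}$, so $\dim_L L[y_1 \upto y_n]/D_{a_1 \upto a_n} = |G|$ as well; together with $J\subseteq D_{a_1 \upto a_n}$ this forces $J = D_{a_1 \upto a_n}$. The basis is reduced because each $f_i$ is monic and no monomial of any $f_i$, each being either $y_i$ or a power $y_1^k$ with $k<|G|$, is divisible by the leading monomial of another $f_j$.

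For the last stage, write $D_\sigma := \prod_{\tau\in G\setminus\{\sigma\}}(\sigma\cdot a_1 - \tau\cdot a_1)$ for the denominator occurring in $f_i$; then $D_\sigma = f_1'(\sigma\cdot a_1)$ (derivative with respect to $y_1$), and $D_\sigma\in R$ since $R$ is $G$-stable. The polynomial $f_1$ is itself $G$-invariant, because the coefficient-wise action of $\sigma$ merely permutes its linear factors; hence its coefficients (the elementary symmetric functions of the $\tau\cdot a_1$, up to sign) lie in $R\cap L^G = R^G$, so $a = \discr(f_1)$, being a $\ZZ$-polynomial in these coefficients, lies in $R^G$, and $a\ne 0$ since the $D_\sigma$ are nonzero. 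The standard formula for the discriminant of a monic polynomial gives $a = \pm\prod_{\sigma\in G}f_1'(\sigma\cdot a_1) = \pm\prod_{\sigma\in G}D_\sigma$, so for each $\sigma$ we get $a/D_\sigma = \pm\prod_{\rho\ne\sigma}D_\rho\in R$. Therefore, for $2\le i\le n$,
\[
a\cdot f_i = a\,y_i \;-\; \sum_{\sigma\in G}(\sigma\cdot a_i)\,\frac{a}{D_\sigma}\prod_{\tau\in G\setminus\{\sigma\}}(y_1 - \tau\cdot a_1)\in R[y_1 \upto y_n],
\]
while $a\cdot f_1\in R[y_1]$ is immediate since $f_1\in R[y_1]$.

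The main obstacle is this last stage: the whole argument hinges on recognizing $\discr(f_1)$ as, up to sign, the product of the Lagrange denominators $D_\sigma = f_1'(\sigma\cdot a_1)$, which is exactly what makes the cancellation $a/D_\sigma\in R$ work out. By contrast, once the leading monomials are identified the Gr\"obner basis property follows immediately from Buchberger's product criterion, and the identification of $J$ with $D_{a_1 \upto a_n}$ and the verification of reducedness are routine.
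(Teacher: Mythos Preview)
Your proof is correct and follows essentially the same route as the paper's: membership of the $f_i$ in $D_{a_1 \upto a_n}$ via evaluation, Buchberger's product criterion for the coprime leading monomials $y_1^{|G|}, y_2 \upto y_n$, and a dimension count $\dim_L L[y_1 \upto y_n]/J = |G| = \dim_L L[y_1 \upto y_n]/D_{a_1 \upto a_n}$ to conclude $J = D_{a_1 \upto a_n}$. The only difference is that you additionally spell out the discriminant claim $a\cdot f_i \in R[y_1 \upto y_n]$ via the identity $\discr(f_1) = \pm\prod_\sigma f_1'(\sigma\cdot a_1)$, which the paper leaves to the reader.
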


\begin{proof}
  Direct computation shows that $f_i(\rho \cdot a_1 \upto \rho \cdot
  a_n) = 0$ for every $\rho \in G$ and $1 \le i \le n$, so $f_i \in
  D_{a_1 \upto a_n}$. The linear map $L[y_1 \upto y_n] \to L^{|G|}$
  given by $f \mapsto \bigl(f(\sigma \cdot a_1 \upto \sigma \cdot a_n)
  | \sigma \in G\bigr)$ is surjective and has the kernel $D_{a_1 \upto
    a_n}$, so $L[y_1 \upto y_n]/D_{a_1 \upto a_n}$ shares the
  $L$-dimension $|G|$ with $L[y_1 \upto y_n]/(f_1 \upto f_n)$. It
  follows that $D_{a_1 \upto a_n} = (f_1 \upto f_n)$. By Buchberger's
  first criterion (see \mycite[Theorem~5.68]{BW}), the $f_i$ form a
  Gr\"obner basis, which is clearly reduced.
\end{proof}

\pref{pFinite} leads to a variant of steps~\ref{aFinite1}
and~\ref{aFinite2} in \aref{aFinite}. In this variant it is often
possible to choose~$a$ as a proper divisor of $\discr(f_1)$. This will
be very beneficial for computations. Since the discriminant is the
product of all $\sigma \cdot a_1 - \tau \cdot a_1$, it is not hard to
choose a good (or even optimal) $a$ among its divisors. The
invariant~$a$ is also significant since $R_a^G$ is a direct summand of
$R_a$, which tends to imply that $R_a^G$ inherits nice geometric
properties from $R_a$. (An instance of this tendency is \cref{cCM}.)
In other words, the spectrum of $R^G$ should be nice outside of the
hypersurface given by~$a$. Now if~$a$ is chosen as in \pref{pFinite},
then a point outside the hypersurface given by~$a$ is fixed by no
other group element than the identity. This means that a point whose
stabilizer subgroup is trivial should map to a nice point in the
categorical quotient. An instance of this general philosophy is the
fact that if a finite group $G$ acts linearly on a finite-dimensional
vector space $V$, then a point $x \in V$ with $G_x = \{\id\}$ maps to
a regular point in $V \quo G$ (see, for example, \mycite{kem:loci}).

A particularly interesting special case to which \aref{aFinite} can be
applied is the case of multiplicative invariants: One considers a
subgroup $G \subseteq \GL_n(\ZZ)$ acting on the Laurent polynomial
ring $K[x_1 \upto x_n,x_1^{-1} \upto x_n^{-1}]$ by transforming the
exponent vectors of monomials in the obvious way. A very useful source
on multiplicative invariant theory is the book by
\mycite{Lorenz:2005}. Proposition~3.3.1 in that book reduces the
general situation to the case that $G$ is finite and $K = \ZZ$. So our
algorithm applies. Another interesting case is the action of a finite
subgroup $G \subseteq \GL_n(K)$ on a polynomial ring $K[x_1 \upto
x_n]$ by linear transformations of the $x_i$. In both these special
cases (multiplicative and linear actions), most of the optimizations
of Semi-algorithm~\ref{aSemi} that will be discussed in \sref{sLinear}
are applicable, since we have methods for producing ever more
invariants. Probably the most important ground ring is $K = \ZZ$,
since it allows the computation of multiplicative invariants but also
since ground rings that are finitely generated $\ZZ$-algebras can be
dealt with by regarding $\ZZ$ as the ground ring.

For $K = \ZZ$, \aref{aFinite} and its optimizations for linear and
multiplicative actions were implemented in MAGMA (see \mycite{magma})
by the author. So it is time now to present some examples.

\begin{ex} \label{exZ2}%
  Perhaps the simplest example of an invariant ring that is not
  Cohen--Macaulay (and that also violates Noether's degree bound) is
  the invariant ring $K[x_1,x_2,x_3,y_1,y_2,y_3]^G$ with $G \cong C_2$
  acting by interchanging the $x_i$ and $y_i$ and $K$ a field of
  characteristic~$2$ (see \mycite[Example~3.4.3]{Derksen:Kemper}). So
  it will be interesting to consider the invariant ring over $K =
  \ZZ$. Running the MAGMA program implementing \aref{aFinite} yields
  \[
  R^G := \ZZ[x_1,x_2,x_3,y_1,y_2,y_3]^G =
  \ZZ[s_1,s_2,s_3,p_1,p_2,p_3,u_{1,2},u_{1,3},u_{2,3},f]
  \]
  with
  \[
  s_i = x_i + y_i, \ p_i = x_i y_i, \ u_{i,j} = x_i y_j + y_i x_j, \
  \text{and} \ f = x_1 y_2 y_3 + y_1 x_2 x_3.
  \]
  If $K$ is a field (or, in fact, any other commutative ring), then
  $K[x_1,x_2,x_3,y_1,y_2,y_3]^G = K \otimes_\ZZ R^G$, so the above
  generators also generate the invariant ring over $K$.
\end{ex}

The next example deals with multiplicative invariants.

\begin{ex} \label{exMultiplicative}%
  The finite subgroups of $\GL_2(\ZZ)$ have been classified and can be
  found (up to conjugacy) in Table~1.2 from
  \mycite{Lorenz:2005}. Using the MAGMA program mentioned above, we
  computed the multiplicative invariant rings of all these groups. The
  results are in perfect agreement with Table~3.1
  from~[\citenumber{Lorenz:2005}].  The computations took just a few
  seconds or less, except for one case that took almost two hours. We
  present three examples. The action is always on the Laurent
  polynomial ring $\ZZ[x,y,x^{-1},y^{-1}]$.
  \begin{enumerate}
    \renewcommand{\theenumi}{\arabic{enumi}}
  \item 
    The matrix $\sigma := \left(\begin{smallmatrix} -1 & 0 \\ 0 &
        -1 \end{smallmatrix}\right)$ affords the transformation $x \to
    x^{-1}$, $y \to y^{-1}$. The invariant ring is
    \[
    \ZZ[x,y,x^{-1},y^{-1}]^{\langle \sigma \rangle} = \ZZ\bigl[x +
    x^{-1},y + y^{-1},x y^{-1} + x^{-1} y\bigr].
    \]
    The computation was done by the MAGMA program mentioned above and
    took 0.04 seconds on a 2.67 GHz Intel Xeon X5650 processor.
  \item 
    The matrix $\tau := \left(\begin{smallmatrix} 0 & 1 \\ 1 &
        0 \end{smallmatrix}\right)$ together with the above~$\sigma$
    generates a group that is isomorphic to $C_2 \times C_2$. So we
    are considering the additional transformation $x \leftrightarrow
    y$. The invariant ring is
    \[
    \ZZ[x,y,x^{-1},y^{-1}]^{\langle \sigma,\tau \rangle} = \ZZ\bigl[x
    y^{-1} + x^{-1} y,x y + x^{-1} y^{-1},x + y + x^{-1} +
    y^{-1}\bigr].
    \]
    The computation took 0.12 seconds.
  \item 
    The matrix $\rho := \left(\begin{smallmatrix} -1 & 0 \\ 0 &
        1 \end{smallmatrix}\right)$ together with the above~$\tau$
    generates a group that is isomorphic to the dihedral group of
    order~$8$. The invariant ring is
    \[
    \ZZ[x,y,x^{-1},y^{-1}]^{\langle \sigma,\tau \rangle} = \ZZ\bigl[x
    + y + x^{-1} + y^{-1},x y + x y^{-1} + x^{-1} y + x^{-1} y^{-1}
    \bigr].
    \]
    The computation took 0.04 seconds.
  \end{enumerate}
  In the third example the invariant ring is isomorphic to a
  polynomial ring, and in the others the generators are subject to
  just one relation.
\end{ex}

Noether's finiteness theorem~[\citenumber{noe:c}] asserts the finite
generation of invariant rings $R^G$ of finite groups without requiring
$R$ to be an integral domain. So it is somewhat unsatisfactory that
\aref{aFinite} has this requirement. We will now give an algorithm
that makes Noether's original argument constructive in the case of a
Zacharias ground ring. The algorithm is much less efficient and also
harder to implement than \aref{aFinite} which can, in fact, be used to
enhance the efficiency of the algorithm. We first present the
algorithm and then make comments on how the steps can be put into
practice.

\begin{alg}[Invariant ring of a finite group acting on an algebra over
  a Zacharias ring] \label{aNondomain} \mbox{}%
   \begin{description}
   \item[\bf Input:] A finitely generated algebra $R = K[a_1 \upto
     a_n]$ over a Zacharias ring $K$, and a finite group $G$ of
     automorphisms of $R$. $R$ need not be an integral domain.
   \item[\bf Output:] A finite set of generators of the invariant ring
     $R^G$ as a $K$-algebra.
  \end{description}
  \begin{enumerate}
    \renewcommand{\theenumi}{\arabic{enumi}}
  \item \label{aNondomain1} Form a subalgebra $A = K[b_1 \upto b_m]
    \subseteq R^G$ such that
    \begin{equation} \label{eqNoether}%
      \prod_{\sigma \in G} (y - \sigma \cdot a_i) \in A[y] \quad
      \text{for} \quad i = 1 \upto n.
    \end{equation}
    With $P := K[y_1 \upto y_m]$ a polynomial ring, the homomorphism
    $P \to R$, $y_i \mapsto b_i$ makes $R$ into a $P$-module.
  \item \label{aNondomain2} Choose $c_1 \upto c_r \in R$ that generate
    $R$ as an $A$-module, with $c_1 = 1$. This yields a surjective,
    $P$-linear map $\mapl{\phi}{P^r}{R}{(g_1 \upto g_r)}{\sum_{i=1}^r
      g_i(b_1 \upto b_m) c_i}$.
  \item \label{aNondomain3} Compute generators of $\ker(\phi)$. This
    defines a $P$-linear map $\map{\eta}{P^m}{P^r}$ such that the
    sequence $P^m \stackrel{\eta}{\longrightarrow} P^r
    \stackrel{\phi}{\longrightarrow} R \to 0$ is exact.
  \item \label{aNondomain4} With $G$ generated by elements $\sigma_1
    \upto \sigma_s$, define the map
    \[
    \mapl{\underline{\sigma} - \underline{\id}}{R}{R^s}{a}{(\sigma_1
      \cdot a - a \upto \sigma_s \cdot a - a)},
    \]
    which is $P$-linear and has kernel $R^G$. Construct a $P$-linear
    map $\map{\psi}{P^r}{P^{r s}}$ such that
    \[
    (\underline{\sigma} - \underline{\id}) \circ \phi = \phi^{\oplus
      s} \circ \psi,
    \]
    where $\map{\phi^{\oplus s}}{P^{r s}}{R^s}$ is the component-wise
    application of~$\phi$. So the corresponding part of the diagram
    below commutes.
  \item \label{aNondomain5} Compute a generating set $m_1 \upto m_t$
    of the $P$-module
    \[
    M := \bigl\{(v,w) \in P^r \oplus P^{m s} \mid \psi(v) =
    \eta^{\oplus s}(w)\bigr\} \subseteq P^r \oplus P^{m s}.
    \]
    With $\map{\pi_i}{P^r \oplus P^{m s}}{P^r,P^{m s}}$ the
    projections, the diagram%
    \vspace{-5mm}%
    \DIAGV{60}%
    {} \n{} \n{} \n{} \n{R^G} \nn%
    {} \n{} \n{} \n{} \n{\sar} \nn%
    {M} \n{\Ear{\pi_1}} \n{P^r} \n{\Ear{\phi}} \n{R} \n{\ear} \n{0}
    \nn%
    {\Sar{\pi_2}} \n{} \n{\Sar{\psi}} \n{} \n{\saR{\underline{\sigma}
        - \underline{\id}}} \nn%
    {P^{m s}} \n{\Ear{\eta^{\oplus s}}} \n{P^{r s}}
    \n{\Ear{\phi^{\oplus s}}} \n{R^s} \n{\ear} \n{0}%
    \diag%
    of $P$-modules commutes and has the last row and the third column
    exact (but the second last row only at $R$). It follows by a
    diagram chase and from the definition of $M$ that the
    $\phi(\pi_1(m_j))$ generate $R^G$ as a module over $A$. So the
    $b_i$ and the $\phi(\pi_1(m_j))$ generate $R^G$ as a $K$-algebra.
  \end{enumerate}
\end{alg}

We make comments on the steps of the algorithm.
\begin{enumerate}
\item[\eqref{aNondomain1}] The $b_i$ can simply be taken as the
  coefficients of the polynomials in~\eqref{eqNoether}. However, the
  efficiency of the algorithm hinges on minimizing~$m$ and~$r$, so
  choosing a larger subalgebra $A$ will be beneficial. After
  constructing a $K$-domain $S$ with a $G$-action and a
  $G$-equivariant, surjective homomorphism $S \to R$, one can apply
  \aref{aFinite} to $S$ and then take $A$ as the image of $S^G$ in
  $R$. For example, $S$ can be chosen as a polynomial ring with
  variables $x_{i,\sigma}$ mapping to $\sigma \cdot a_i$ (where
  $\sigma \in G$, $i = 1 \upto n$) with the obvious $G$-action.
\item[\eqref{aNondomain2}] Since by~\eqref{eqNoether} the $a_i$
  satisfy integral equations over $A$ of degree~$|G|$, the $c_j$ can
  be chosen as the products $\prod_{i=1}^n a_i^{e_i}$ with $0 \le e_i
  < |G|$. But better choices may be beneficial.
\item[\eqref{aNondomain3}] An algorithm for performing this step is
  given by \lref{lKer} below.
\item[\eqref{aNondomain4}] To obtain the $\psi$-image of the $j$th
  free generator of $P^r$, one needs to compute a $\phi$-preimage of
  $\sigma_i(c_j) - c_j$ for $i = 1 \upto s$. So the $\sigma_i(c_j) \in
  R$ need to be represented as $P$-linear combinations of the
  $c_k$. This can be done by applying \lref{lKer} to $c_1 \upto
  c_r,\sigma \cdot c_j$ and using the last statement of the lemma.
\item[\eqref{aNondomain5}] $M$ is the kernel of the map
  \[
  \map{- \psi \oplus \eta^{\oplus s}}{P^r \oplus P^{m s}}{P^{r s}}.
  \]
  Algorithms for computing the kernel of a $P$-linear map of free
  $P$-modules of finite rank are well-known (see
  \mycite[Exercise~4.3.15d]{AL}).
\end{enumerate}

Steps~\ref{aNondomain3} and~\ref{aNondomain4} of the algorithm require
the following lemma.

\begin{lemma} \label{lKer}%
  Let $I \subseteq K[x_1 \upto x_n]$ be an ideal in a polynomial ring
  over a ring $K$, and let $f_1 \upto f_m,h_1 \upto h_r \in K[x_1
  \upto x_n]$ be polynomials with $h_1 = 1$ defining a map
  \[
  \mapl{\phi}{K[y_1 \upto y_m]^r}{K[x_1 \upto x_n]/I := R}{(g_1 \upto
    g_r)}{\sum_{j=1}^r g_j(f_1 \upto f_m) h_j + I},
  \]
  where the $y_i$ are indeterminates. Let $J \subseteq K[x_1 \upto
  x_n,y_1 \upto y_m,z_2 \upto z_r]$ be the ideal generated by $I$ and
  by all $y_i - f_i$ and $z_j - h_j$ (with $z_j$ further
  indeterminates). Choose a monomial ordering on
  $K[\underline{x},\underline{y},\underline{z}]$ such that every $x_i$
  is bigger than every monomial in the $y$-and $z$-variables, and such
  that if $\deg_z(s) < \deg_z(t)$ for two monomials $s,t \in
  K[\underline{y},\underline{z}]$, then $s < t$. Let $\mathcal{G}$ be
  a Gr\"obner basis of $J$ with respect to this ordering. Write
  $K[\underline{y},\underline{z}]_{\le 1} := \bigl\{f \in
  K[\underline{y},\underline{z}] \mid \deg_z(f) \le 1\bigr\}$, and set
  \[
  \mathcal{G}' := \left(K[\underline{y},\underline{z}]_{\le 1} \cap
    \mathcal{G}\right) \cup \bigl\{z_j g \mid g \in K[\underline{y}]
  \cap \mathcal{G}, \ j = 2 \upto n\bigr\} \subseteq
  K[\underline{y},\underline{z}]_{\le 1}.
  \]
  For $g = g_1 + g_2 z_2 + \cdots g_r z_r \in
  K[\underline{y},\underline{z}]_{\le 1}$ with $g_i \in
  K[\underline{y}]$, write $\overrightarrow{g} =(g_1 \upto g_r)$. Then
  the kernel of~$\phi$ is generated (as a $K[\underline{y}]$-module)
  by $\{\overrightarrow{g} \mid g \in \mathcal{G}'\}$. Moreover, if
  $z_r > z_j t$ for $j < r$ and~$t$ a monomial in the $y$-variables,
  and if there is a vector in $\ker(\phi)$ whose last component
  is~$1$, then such a vector can be obtained as a $K$-linear
  combination of the $\overrightarrow{g}$ with $g \in \mathcal{G}'$.
\end{lemma}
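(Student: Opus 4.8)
The plan is to translate the computation of $\ker(\phi)$ into an elimination‑ideal computation inside $K[\underline x,\underline y,\underline z]$, and then to exploit two features of the chosen ordering. First I would set $P:=K[\underline y]$ and regard $R$ as a $P$‑module via $y_i\mapsto f_i+I$, so that $\phi$ becomes $P$‑linear. Let $N:=K[\underline y,\underline z]_{\le 1}$, a free $P$‑module on the basis $z_1:=1,z_2\upto z_r$, and abbreviate the coordinate isomorphism $N\to P^r$ by $g=\sum_{j=1}^r g_j z_j\mapsto\overrightarrow g=(g_1\upto g_r)$. Writing $J_0:=J\cap K[\underline y,\underline z]$ and letting $\beta\colon K[\underline y,\underline z]\to R$ be $y_i\mapsto f_i+I$, $z_j\mapsto h_j+I$, one has $K[\underline x,\underline y,\underline z]/J\cong R$, hence $\ker(\beta)=J_0$, and a direct check gives $\phi(\overrightarrow g)=\beta(g)$ for $g\in N$. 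Thus $\ker(\phi)=\{\overrightarrow g\mid g\in J_0\cap N\}$, and since $J_0\cap N$ is a $P$‑submodule of $N$ that visibly contains $\mathcal G'$, the whole lemma comes down to proving that $\mathcal G'$ generates $J_0\cap N$ as a $P$‑module.

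For that I would use that the ordering is an elimination ordering for $\underline x$ (every $x_i$ beats every monomial in $\underline y,\underline z$), so that $\mathcal G_0:=\mathcal G\cap K[\underline y,\underline z]$ is a Gr\"obner basis of $J_0$, together with the $z$‑degree compatibility of the ordering, which forces $\deg_z(\mathrm{LM}(g))=\deg_z(g)$ for every $g\in K[\underline y,\underline z]$. Given $p\in J_0\cap N$, I would reduce it to $0$ modulo $\mathcal G_0$ and track the $z$‑degree: at each step the current remainder $q$ has $\deg_z(q)\le 1$, so any $g\in\mathcal G_0$ used in that step has $\mathrm{LM}(g)\mid\mathrm{LM}(q)$ and hence $\deg_z(g)=\deg_z(\mathrm{LM}(g))\le 1$, i.e. $g\in N$. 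If $\deg_z(g)=1$ then $g\in K[\underline y,\underline z]_{\le 1}\cap\mathcal G\subseteq\mathcal G'$ and the subtracted term is a $K[\underline y]$‑multiple of $g$; if $\deg_z(g)=0$ then $g\in K[\underline y]\cap\mathcal G$, and according to whether $\mathrm{LM}(q)$ is a pure $y$‑monomial or of the form $z_j\cdot(\text{a }y\text{-monomial})$, the subtracted term is a $K[\underline y]$‑multiple of $g\in\mathcal G'$ or of $z_jg\in\mathcal G'$. In every case the subtracted term has $z$‑degree $\le 1$, so the invariant $\deg_z\le 1$ is preserved; collecting the subtracted terms over all reduction steps writes $p$ as a $K[\underline y]$‑linear combination of elements of $\mathcal G'$. (Over a general Zacharias ground ring one replaces single reductions by the finitely‑many‑at‑once reductions of the ring version of Buchberger reduction; the $z$‑degree argument is unchanged.)

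For the last statement I would argue as follows. If $v\in\ker(\phi)$ has last component $1$, the corresponding $p\in J_0\cap N$ has $z_r$‑coefficient $1\in K$; then, under the hypothesis $z_r>z_jt$ for $j<r$ (with the convention $z_1=1$) and all $y$‑monomials $t$, every monomial of $p$ other than $z_r$ is strictly below $z_r$, so $\mathrm{LM}(p)=z_r$ with leading coefficient $1$. Since $\mathcal G_0$ is a Gr\"obner basis and (excluding the trivial case $R=0$) contains no nonzero constant, the term $z_r$ is reducible by some $g\in\mathcal G_0$ with $\mathrm{LM}(g)=z_r$ — or, over a ring, by finitely many $g_i$ whose leading coefficients generate an ideal containing $1$. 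Each such $g$ has $\deg_z(g)=1$, hence lies in $\mathcal G'$, and because $\mathrm{LM}(g)=z_r$ is a bare variable its $z_r$‑coefficient is the scalar $\mathrm{lc}(g)$; rescaling $g$ (resp. forming the appropriate $K$‑combination of the $g_i$) therefore yields an element of the $K$‑span of $\mathcal G'$ whose $z_r$‑coefficient is $1$, and its image under $g\mapsto\overrightarrow g$ is the desired vector in $\ker(\phi)$.

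I expect the delicate part to be the second paragraph: making the assertion ``every reduction step stays inside $N$ and uses only elements of $\mathcal G'$ up to $K[\underline y]$‑multiples'' fully rigorous, since this is exactly where the $z$‑degree compatibility of the ordering carries the whole argument, and one must check that the division process — in particular over a Zacharias ring — cannot spoil it. The passage from a $K[\underline y]$‑combination to an honest $K$‑combination in the final statement is comparatively easy, but it should be stressed that it hinges on $\mathrm{LM}(p)$ being the single variable $z_r$, which is precisely what the extra hypothesis $z_r>z_jt$ buys.
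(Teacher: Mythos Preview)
Your proposal is correct and follows essentially the same approach as the paper: both translate $\ker(\phi)$ into $J \cap K[\underline{y},\underline{z}]_{\le 1}$ and exploit the $z$-degree compatibility of the ordering to show that reducing an element of this set never leaves it and uses only $K[\underline{y}]$-multiples of elements of $\mathcal{G}'$. The paper phrases the main step as a minimal-counterexample argument on $\mathrm{LM}(g)$ rather than an explicit division, and handles the final statement uniformly via the substitution $\widehat{g}_i := z_{j_i} g_i$ (which also absorbs the constant-$g_i$ case you set aside with the ``$R=0$'' remark---note that over a ring a nonzero constant in $\mathcal{G}_0$ need not force $R=0$, but your argument still goes through using $z_r g_i \in \mathcal{G}'$), yet the substance is the same.
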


\begin{proof}
  Let $M \subseteq K[\underline{y},\underline{z}]_{\le 1}$ be the
  $K[\underline{y}]$-submodule generated by $\mathcal{G}'$. With
  \[
  \map{\widehat{\phi}}{K[\underline{y},\underline{z}]_{\le 1}}{R}, \
  y_i \mapsto f_i + I, \ z_j \mapsto h_j + I,
  \]
  we need to show $\ker(\widehat{\phi}) = M$. Under the additional
  hypothesis on the monomial ordering we also need to show that if
  there exists an element in $\ker(\widehat{\phi})$ with~$1$ as the
  coefficient of $z_r$, then such an element occurs as a $K$-linear
  combination of $\mathcal{G}'$.

  We have $\mathcal{G}' \subseteq K[\underline{y},\underline{z}]_{\le
    1} \cap J$. Moreover, if $g \in K[\underline{y},\underline{z}]
  \cap J$, then $g(f_1 \upto f_m,h_2 \upto h_r) \in I$, so
  $K[\underline{y},\underline{z}]_{\le 1} \cap J \subseteq
  \ker(\widehat{\phi})$. It follows that $M \subseteq
  \ker(\widehat{\phi})$.
  
  To prove the reverse inclusion, take $g \in
  \ker(\widehat{\phi})$. Then $g(f_1 \upto f_m,h_2 \upto h_r) \in I
  \subseteq J$ and $g - g(f_1 \upto f_m,h_2 \upto h_r) \in J$, so $g
  \in J$. By the definition of a Gr\"obner basis (over a ring) it
  follows that the leading term $\LT(g)$ lies in the ideal generated
  by the leading terms of the elements of $\mathcal{G}$. So
  \begin{equation} \label{eqLT}%
    \LT(g) = \sum_{i=1}^s c_i t_i \LT(g_i)
  \end{equation}
  with $g_1 \upto g_s \in \mathcal{G}$, $c_i \in K$, and $t_i$
  monomials such that $t_i \LM(g_i) = \LM(g)$. Since $g \in
  K[\underline{y},\underline{z}]_{\le 1}$, the same follows for
  the~$t_i$ and $\LT(g_i)$. By the properties of the monomial
  ordering, this implies $g_i \in \mathcal{G}'$. Suppose that the
  $\deg_z(t_i) = 1$ for an~$i$. Then $\deg_z\bigl(\LM(g_i)\bigr) = 0$,
  so $g_i \in K[\underline{y}] \cap \mathcal{G}$. With $t_i = z_{j_i}
  \widehat{t_i}$ and $\widehat{g_i} := z_{j_i} g_i \in \mathcal{G}'$
  we have $c_i t_i \LT(g_i) = c_i \widehat{t_i} \LT(\widehat{g_i})$
  and $\widehat{t_i} \in K[\underline{y}]$. On the other hand, if
  $\deg_z(t_i) = 0$, then the same properties hold with $\widehat{t_i}
  := t_i$ and $\widehat{g_i} := g_i$. It follows that $\widehat{g} :=
  \sum_{i=1}^s c_i \widehat{t_i} \widehat{g_i} \in M$ and $\LM(g -
  \widehat{g}) < \LM(g)$. So if we assume $M \subsetneqq
  \ker(\widehat{\phi})$ and choose $g \in \ker(\widehat{\phi})
  \setminus M$ with $\LM(g)$ minimal, we arrive at a contradiction.

  To prove the last assertion, assume that the $z_r$-coefficient of $g
  \in \ker(\widehat{\phi})$ is~$1$. Then $\LT(g) = z_r$ by the
  additional hypothesis on the monomial ordering. So the equations
  $\widehat{t_i} \LM(\widehat{g_i}) = t_i \LM(g_i) = \LM(g)$ together
  with $\widehat{t_i}$ in $K[\underline{y}]$ imply $\widehat{t_i} =
  1$. It follows by~\eqref{eqLT} that $\widehat{g} = \sum_{i=1}^s c_i
  \widehat{g_i}$ has $z_r$-coefficient~$1$, as claimed.
\end{proof}

We finish the section with an example.

\begin{ex} \label{exNondomain}%
  The cyclic group $G$ of order~$2$ acts on $R := \ZZ[x_1,x_2]/(x_1^2
  - x_2^2)$ by exchanging $\overline{x}_1$ and $\overline{x}_2$ (where
  the bars indicate classes modulo $(x_1^2 - x_2^2)$). We apply
  \aref{aNondomain}. In step~\ref{aNondomain1} we can choose $A =
  \ZZ[f_1,f_2]$ with
  \[
  f_1 = \overline{x}_1 + \overline{x}_2 \quad \text{and} \quad f_2 =
  \overline{x}_1 \overline{x}_2,
  \]
  which is also the image of $\ZZ[x_1,x_2]^G \to R^G$. In
  step~\ref{aNondomain2} we can choose $c_1 = 1$ and $c_2 =
  \overline{x}_1$. The computation of $\ker(\phi)$ in
  step~\ref{aNondomain3} is too hard to do by hand. The result is a
  submodule of $P^2$ generated by $m = 4$ elements, where $P =
  \ZZ[y_1,y_2]$. The computation of $M$ amounts to computing the
  kernel of a map $P^6 \to P^2$. The computation, done with MAGMA,
  produces~$4$ generators of $M$, but only one of them yields a new
  invariant. This invariant is $f_3 = \overline{x}_1^2$, so the result
  of the computation is $R^G = \ZZ[f_1,f_2,f_3]$. We have $2 f_3 =
  f_1^2 - 2 f_2$, so $f_3$ would lie in $A$ if we were computing over
  a ring in which~$2$ is invertible. But over $\ZZ$ we see that
  although the map $\ZZ[x_1,x_2] \to R$ is $G$-equivariant and
  surjective, its restriction to invariants is not surjective.

  The total computation time for this example was 0.01 seconds.
\end{ex}

For the rest of the paper we will focus on the case of infinite
groups.

\section{The Italian problem} \label{sItalian}

We now discuss the question, raised by \tref{tLocalize}, whether
there exists $a \in R^G$ such that $A \subseteq R_a$. The discussion
was postponed until now for not disturbing the flow of ideas.

\begin{prop} \label{pRa}%
  Assume the situation given by Assumption~\ref{Assumption}. If
  \[
  L^G = \Quot(R^G),
  \]
  then there exists a nonzero invariant $a \in R^G$ such that $A
  \subseteq R_a$. If $L^G = \Quot(A)$ (which by
  \tref{tInvariantField} is guaranteed to hold if $E$ is tamely
  extended), then the converse holds.
\end{prop}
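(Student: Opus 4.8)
The plan is to establish the two directions separately, both of which reduce to elementary localization arguments once the inclusions $R^G \subseteq A \subseteq L^G$ from \tref{tLocalize} are invoked. For the first direction, assume $L^G = \Quot(R^G)$. By Assumption~\ref{Assumption}, the algebra $A$ is generated over $K$ by finitely many elements of $L^G$, say $A = K[c_1 \upto c_s]$ with $c_i \in L^G = \Quot(R^G)$. The idea is simply to clear denominators: write each $c_i = b_i/a_i$ with $b_i, a_i \in R^G$ and $a_i \ne 0$, set $a := \prod_{i=1}^s a_i \in R^G \setminus \{0\}$, and observe that then $c_i \in R^G_a \subseteq R_a$ for all~$i$. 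Since $R_a$ is a ring containing $K$ and all the $c_i$, it contains the $K$-algebra they generate, i.e.\ $A \subseteq R_a$. This gives the desired nonzero invariant~$a$.

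For the converse, assume additionally that $L^G = \Quot(A)$ and that there is a nonzero $a \in R^G$ with $A \subseteq R_a$. I want to conclude $L^G = \Quot(R^G)$. One inclusion, $\Quot(R^G) \subseteq L^G$, is automatic since $R^G \subseteq L^G$ and $L^G$ is a field. For the reverse inclusion, take $b \in L^G = \Quot(A)$, so $b = c/c'$ with $c, c' \in A$ and $c' \ne 0$. Since $A \subseteq R_a$, there is an integer $k \ge 0$ with $a^k c \in R$ and $a^k c' \in R$; moreover both $a^k c$ and $a^k c'$ lie in $L^G$ (as $a, c, c' \in L^G$), hence in $R \cap L^G = R^G$. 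Therefore $b = (a^k c)/(a^k c') \in \Quot(R^G)$, since $a^k c' \ne 0$. This proves $L^G \subseteq \Quot(R^G)$ and hence equality.

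The two steps are entirely routine, so there is no real obstacle; the only point requiring a moment's care is the membership $R \cap L^G = R^G$, which is immediate from the definition of the invariant ring, together with the observation that multiplying an element of $L^G$ by the invariant power $a^k$ keeps it in $L^G$. The parenthetical remark in the statement — that $L^G = \Quot(A)$ holds automatically when $E$ is tamely extended — is exactly \tref{tInvariantField}, so nothing further needs to be said there.
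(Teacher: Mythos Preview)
Your proof is correct and follows essentially the same approach as the paper. The paper's argument for the converse is marginally more compact---it observes directly that $A \subseteq R_a \cap L^G = (R_a)^G = (R^G)_a \subseteq \Quot(R^G)$ and then passes to fields of fractions---but this is just a stylistic compression of the same denominator-clearing idea you carry out explicitly.
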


\begin{proof}
  Suppose $L^G = \Quot(R^G)$. Since $A$ is a finitely generated
  subalgebra of $L^G$, we can choose $a \in R^G \setminus \{0\}$ as a
  common denominator of the generators. The $A \subseteq
  R_a$. Conversely, $A \subseteq R_a$ with $a \in R^G \setminus \{0\}$
  implies
  \[
  A \subseteq R_a \cap L^G = (R_a)^G = (R^G)_a\subseteq \Quot(R^G),
  \]
  so $L^G = \Quot(A) \subseteq \Quot(R^G) \subseteq L^G$.
\end{proof}

In the standard situation of invariant theory where $G$ is a linear
algebraic group, $X$ an irreducible $G$-variety and $R = K[X]$, the
above proposition raises the question whether the invariant field
$K(X)^G$ coincides with the field of fractions of the invariant ring
$K[X]^G$. This question is sometimes referred to as the {\em Italian
  problem} (see \mycite[page~183]{Mukai:2003}). A typical example
where $K(X)^G$ and $\Quot\left(K[X]^G\right)$ are different is the
action of the multiplicative group $\Gm$ on $K^2$ with weight $(1,1)$
(see \exref{exGm}). This example also shows that for nontamely
extended Derksen ideals the converse in \pref{pRa} may fail. In other
words, nontamely extended Derksen ideals can serve to compute (a
localization of) $K[X]^G$ even if the Italian problem has a negative
answer.

We will give a positive answer to the Italian problem in two cases. As
a preparation we need the following proposition, which was proved (in
a more general situation) by
\mycite[Lemma~4.13]{Hashimoto:factorial}. For the convenience of the
reader we include a proof here that is adapted to our
situation. Notice that the proposition would be almost trivial (and
would not require the hypothesis that $G$ is connected) with the
additional hypothesis that $K[X]$ has $K \setminus \{0\}$ as its group
of units.

\begin{prop}[Hashimoto] \label{pHashimoto}%
  Let $G$ be a connected linear algebraic group over an algebraically
  closed field $K$, $X$ an irreducible $G$-variety, and $f \in
  K[X]$. If the ideal $(f) \subseteq K[X]$ is $G$-stable, then there
  exists a homomorphism (i.e., a group homomorphism that is a morphism
  of varieties) $\map{\chi}{G}{\Gm}$ such that $\sigma \cdot f =
  \chi(\sigma) f$ for all $\sigma \in G$.
\end{prop}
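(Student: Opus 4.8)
The plan is to analyze how $G$ acts on the principal ideal $(f)$ and extract the scalar factor, using the connectedness of $G$ in an essential way. Since $(f)$ is $G$-stable, for each $\sigma \in G$ we have $\sigma \cdot f \in (f)$, so $\sigma \cdot f = u_\sigma f$ for some $u_\sigma \in K[X]$; applying $\sigma^{-1}$ shows $u_\sigma$ is a unit in $K[X]$, with inverse $u_{\sigma^{-1}}^\sigma$ (suitably interpreted). So the first step is to set up the map $\sigma \mapsto u_\sigma$ into the unit group $K[X]^\times$, check it is a cocycle-like map (in fact $u_{\sigma\tau} = u_\sigma \cdot (\sigma \cdot u_\tau)$, a crossed homomorphism), and observe that $f$ being a fixed choice makes $u_\sigma$ \emph{unique} as long as $f$ is a nonzerodivisor — which holds since $K[X]$ is a domain (as $X$ is irreducible), with the degenerate case $f = 0$ handled trivially by taking $\chi$ arbitrary, say trivial.

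The key step is to promote $\sigma \mapsto u_\sigma$ to a \emph{morphism of varieties} $G \to K[X]^\times$ and then to cut it down to land in $\Gm = K^\times$. For the morphism property: the action map $G \times X \to X$ is a morphism, so $(\sigma, x) \mapsto (\sigma \cdot f)(x)$ is a regular function on $G \times X$; dividing by $f(x)$ (valid on the open dense locus where $f \ne 0$, and the quotient extends regularly by uniqueness of $u_\sigma$ and normality-type arguments, or simply because $u_\sigma \in K[X]$ is already known to exist) exhibits $(\sigma,x)\mapsto u_\sigma(x)$ as a morphism $G \times X \to \mathbb{A}^1$, hence $\sigma \mapsto u_\sigma$ is a morphism $G \to K[X]^\times$ in the natural sense. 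Now restrict attention to the units themselves: by a theorem of Rosenlicht, for a connected algebraic group $G$ the units of $K[G]$ modulo $K^\times$ form a finitely generated free abelian group, and more to the point, any morphism of varieties from the connected group $G$ to the "unit group scheme" that sends $1 \mapsto 1$ and is multiplicative composes with the degree/divisor data to force each $u_\sigma$ to have $G$-independent divisor; since $G$ is connected and $\operatorname{id} \cdot f = f$ gives $u_{\operatorname{id}} = 1$, the divisor of $u_\sigma$ is constantly zero, so each $u_\sigma$ is a global unit, i.e.\ $u_\sigma \in K^\times$ because $X$ is irreducible of finite type over the algebraically closed field $K$ — wait, that last implication needs $K[X]^\times = K^\times$, which is exactly the extra hypothesis the remark says we are \emph{not} assuming. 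So instead: the map $\sigma \mapsto \operatorname{div}(u_\sigma) \in \operatorname{Div}(X)$ (Weil divisors, if $X$ is normal; otherwise work with the divisor class or pass to a resolution / use the fact that $\mathcal{O}(X)^\times/K^\times$ is a lattice and $G$ connected) is a morphism from the connected variety $G$ to a discrete group fixing the origin, hence constant, hence zero. Therefore $u_\sigma$ is, for every $\sigma$, a unit whose divisor vanishes; fixing a prime divisor argument, $u_\sigma/u_{\operatorname{id}} = u_\sigma$ lies in the subgroup generated by $u_{\operatorname{id}}=1$, and more carefully, the morphism $G \to K[X]^\times/K^\times$ is constant equal to the class of $1$, so $u_\sigma \in K^\times$ for all $\sigma$.

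Finally, set $\chi(\sigma) := u_\sigma \in K^\times = \Gm$. The crossed-homomorphism identity $u_{\sigma\tau} = u_\sigma \cdot (\sigma \cdot u_\tau)$ collapses, since $u_\tau$ is now a constant fixed by $\sigma$, to $\chi(\sigma\tau) = \chi(\sigma)\chi(\tau)$, so $\chi$ is a homomorphism; it is a morphism of varieties by the morphism property established above; and $\sigma \cdot f = \chi(\sigma) f$ by construction. The main obstacle, and the place where connectedness is genuinely needed, is the middle step: controlling the divisor (or unit-class) of $u_\sigma$ and arguing it is forced to be trivial because it depends morphically on $\sigma \in G$ with $G$ connected and $u_{\operatorname{id}} = 1$ — this is precisely where Hashimoto's Lemma~4.13 does the real work, and I would either quote the structure theory of $K[G]^\times/K^\times$ (Rosenlicht) or give the direct divisor-constancy argument sketched above.
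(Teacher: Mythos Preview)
Your overall strategy---write $\sigma\cdot f=u_\sigma f$ with $u_\sigma\in K[X]^\times$, promote $(\sigma,x)\mapsto u_\sigma(x)$ to a unit $U\in K[G\times X]^\times$, and then force $u_\sigma\in K^\times$---is sound and genuinely different from the paper's. The paper never considers the cocycle $u_\sigma$. Instead it fixes $\sigma$ and shows $\sigma\cdot f/f\in K$ by a compactification argument: embed $X$ in a $G$-module, pass to the projective closure $X_{\mathrm{proj}}$, take its normalization $\widetilde X_{\mathrm{proj}}$ (a complete normal $G$-variety), and check that $\sigma\cdot f/f$ lies in every local ring $\mathcal O_{\widetilde X_{\mathrm{proj}},Z}$ at codimension-one $Z$, hence is a global regular function on a complete variety, hence constant. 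Connectedness of $G$ is used to show that the boundary components and the components of $\{f=0\}$ are $G$-stable. Your route is lighter in machinery \emph{provided} the middle step goes through.

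That middle step, however, has a real gap. The divisor argument you sketch is vacuous: each $u_\sigma$ is already a unit in $K[X]$, so on any normal model of $X$ one has $\operatorname{div}(u_\sigma)=0$ trivially, for every $\sigma$; this does not distinguish the constant $1$ from, say, the unit $x$ on $\{xy=1\}$. And the assertion that $\sigma\mapsto[u_\sigma]\in K[X]^\times/K^\times$ is a ``morphism from the connected variety $G$ to a discrete group, hence constant'' is precisely the statement to be proved, not an input: nothing you have written forces this map to be Zariski-locally constant. (A minor secondary gap: knowing $u_\sigma\in K[X]$ for each individual $\sigma$ does not by itself give regularity of $U$ on all of $G\times X$; one needs the short argument that $\alpha(f)\in K[G]\otimes(f)$, where $\alpha$ is the coaction, which follows from linear independence of functions on $G$.)

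The correct tool here is Rosenlicht's theorem on units of a \emph{product} of irreducible varieties over an algebraically closed field---not the theorem on $K[G]^\times/K^\times$ that you cite. It says that every unit in $K[G\times X]$ has the form $c\cdot a(\sigma)\cdot b(x)$ with $c\in K^\times$, $a\in K[G]^\times$, $b\in K[X]^\times$. Applied to $U$, evaluation at $\sigma=1$ gives $1=u_1(x)=c\,a(1)\,b(x)$, so $b$ is constant and $u_\sigma=c\,a(\sigma)\,b\in K^\times$ for all $\sigma$. From there your conclusion (the crossed-homomorphism identity collapses to a genuine homomorphism, and $\chi$ is a morphism by restricting $U$ to $G\times\{x_0\}$) is correct. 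So replace the divisor-constancy sketch by a clean appeal to Rosenlicht's product-unit theorem; that is the engine your argument needs.
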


\begin{proof}
  We may clearly assume $f \ne 0$.

  Embedding $X$ into a $G$-module $V$ (see
  \mycite[Algorithm~1.2]{Derksen.Kemper06}) yields a $G$-equivariant
  epimorphism $K[x_1 \upto x_n] = K[V] \to K[X]$ from a polynomial
  ring onto $K[X]$. Let $I$ be its kernel and $I_\proj \subseteq K[x_0
  \upto x_n]$ the homogenization. With $G$ fixing $x_0$, it is
  straightforward to check that $I_\proj$ is $G$-stable. So $G$ acts
  on the projective variety $X_\proj \subseteq \PP^n$ given by
  $I_\proj$, with the action given by a morphism $G \times X_\proj \to
  X_\proj$. Since $X_\proj$ is the projective closure of $X$, we have
  an injective, birational and $G$-equivariant morphism $X
  \hookrightarrow X_\proj$.

  \newcommand{\Y}{{\widetilde{X}_\proj}}%
  Let $\Y \to X_\proj$ be the normalization of $X_\proj$. This is a
  birational, finite morphism, so in particular it is proper (see
  \mycite[II, Exercise~4.1]{hart}). It follows that $\Y$ is a complete
  variety, which implies
  \begin{equation} \label{eqGamma}
    \Gamma\bigl(\Y,\mathcal{O}_\Y\bigr) = K
  \end{equation}
  (see \mycite[II, Exercise~4.5]{hart}). For an irreducible closed
  subset $Z \subseteq \Y$, we view the local ring $\mathcal{O}_{\Y,Z}$
  as a subring of the function field $K\bigl(\Y\bigr)$ and claim that
  \begin{equation} \label{eqIntersect}
    \Gamma\bigl(\Y,\mathcal{O}_\Y\bigr) = \bigcap_{\codim(Z) = 1}
    \mathcal{O}_{\Y,Z}.
  \end{equation}
  Clearly the left hand side is contained in the right hand side. For
  the converse, let $g \in K\bigl(\Y\bigr)$ be a rational function
  whose domain of definition $U_g$ is strictly contained in $\Y$. Then
  there exists an affine open subset $X' \subseteq \Y$ that is not
  contained in $U_g$. Since $X'$ is normal, it follows by
  \mycite[Theorem~38]{Matsumura:80} that there exists an closed
  irreducible subset $Z' \subset X'$ of codimension~$1$ that does not
  meet $U_g$. Since $U_g$ is open, it follows that the closure $Z :=
  \overline{Z}'$ in $\Y$ does not meet $U_g$. By \lref{lTop} (which
  is proved below), $Z$ is irreducible of codimension~$1$. So~$h$ is
  not contained in $\mathcal{O}_{\Y,Z}$, and~\eqref{eqIntersect} is
  proved. Together with~\eqref{eqGamma}, we obtain
  \begin{equation} \label{eqInter}%
    \bigcap_{\codim(Z) = 1} \mathcal{O}_{\Y,Z} = K.
  \end{equation}
  Let $\sigma \in G$. The composition $\Y \to X_\proj
  \xrightarrow{\sigma} X_\proj$ is dominant, so by the universal
  property of the normalization it factors uniquely through $\Y$, and
  we obtain a morphism $\map{\sigma}{\Y}{\Y}$. This defines a
  $G$-action on $\Y$ such that the map $\Y \to X_\proj$ is
  $G$-equivariant. To see that the action is given by a morphism $G
  \times \Y \to \Y$, we remark that $G \times \Y$ is normal since it
  follows from \mycite[Proposition~6.14.1]{EGA42} that a product of
  normal varieties over an algebraically closed field is normal. It
  follows that the composition $G \times \Y \to G \times X_\proj \to
  X_\proj$ factors uniquely through $\Y$, giving a morphism
  $\map{\phi}{G \times \Y}{\Y}$. For $\sigma \in G$ we have the
  commutative diagram%
  \DIAGV{100}%
  {\Y} \n{\Ear{x \mapsto (\sigma,x)}} \n{G \times \Y} \n{\Ear{\phi}}
  \n{\Y} \nn%
  {\sar} \n{} \n{\sar} \n{} \n{\sar} \nn%
  {X_\proj} \n{\Ear{x \mapsto (\sigma,x)}} \n{G \times X_\proj}
  \n{\ear} \n{X_\proj}%
  \diag%
  This shows that $\phi(\sigma,x) = \sigma \cdot x$ for all $x \in
  \Y$, as claimed.

  The normal locus $X_\norm \subseteq X$ is $G$-stable and open (see
  \mycite[Proposition~6.13.2]{EGA42}), and the composition $X_\norm
  \hookrightarrow X \hookrightarrow X_\proj$ factors uniquely through
  $\Y$. This gives a $G$-equivariant, injective and birational
  morphism $X_\norm \to \Y$. By Zariski's Main Theorem (see
  \mycite[page~209]{Mumford:1999}), it is an isomorphism of $X_\norm$
  with an open subset of $\Y$, so we can identify $X_\norm$ with an
  open subset of $\Y$.

  After these preparations we turn our attention to the regular
  function $f \in K[X]$. For $\sigma \in G$, we claim that
  $\frac{\sigma \cdot f}{f} \in K$. Since~$f$ is defined on $X_\norm
  \subseteq \Y$, it gives rise to a rational function on $\Y$, which
  we also write as~$f$. By~\eqref{eqInter} we need to show that if $Z
  \subset \Y$ is an irreducible closed subset of codimension~$1$, then
  $\frac{\sigma \cdot f}{f} \in \mathcal{O}_{\Y,Z}$. First assume that
  $X_\norm \cap Z = \emptyset$. Then $Z$ is an irreducible component
  of $\Y \setminus X_\norm$. Since $G$ acts morphically on $\Y
  \setminus X_\norm$, it follows (using the connectedness of $G$) that
  $Z$ is $G$-stable. This prompts us to drop the assumption that
  $X_\norm \cap Z = \emptyset$ and treat the more general case that
  $Z$ is $G$-stable. In this case $G$ acts on $\mathcal{O}_{\Y,Z}$,
  which is a discrete valuation ring. The action is by automorphisms
  and therefore preserves the valuation, so $\frac{\sigma \cdot f}{f}
  \in \mathcal{O}_{\Y,Z}^\times$.

  Now assume that $Z$ is not $G$-stable, which implies $Z' := X_\norm
  \cap Z \ne \emptyset$. Then $Z'$ is not $G$-stable, either, since
  otherwise $\overline{Z'} \subseteq \Y$, which by \lref{lTop} equals
  $Z$, would be $G$-stable. By hypothesis, the set $Y := \{x \in
  X_\norm \mid f(x) = 0\} \subsetneqq X_\norm$ is $G$-stable, so the
  same is true for its irreducible components. So $Z'$ is not an
  irreducible component of $Y$. Since by \lref{lTop}, $Z'$ is
  irreducible of codimension~$1$, this implies $Z' \not\subseteq
  Y$. But this means that $\frac{\sigma \cdot f}{f}$ is defined at
  some point from $Z' \subseteq Z$, so it lies in
  $\mathcal{O}_{\Y,Z}$.

  We conclude that $\frac{\sigma \cdot f}{f} \in K$, as claimed. This
  means that $K \cdot f \subseteq K[X]$ is $G$-stable. Since the
  $G$-action on $K[X]$ is locally finite (see \mycite[Chapter~1, \S~1,
  Lemma]{MFK}), it follows that $K \cdot f$ is a $G$-stable subset of
  a $G$-module and therefore itself a $G$-module. So there exists a
  homomorphism $\map{\chi}{G}{\GL_1 = \Gm}$ as claimed.
\end{proof}

The following lemma was used in the above proof.

\begin{lemma} \label{lTop}%
  Let $Y \subseteq X$ be an open subset in a topological
  space. Sending an irreducible closed subset $Z \subseteq Y$ to its
  closure $\overline{Z}$ in $X$ provides an inclusion-preserving
  bijection between the irreducible closed subsets of $Y$ and those
  irreducible closed subsets of $X$ that meet $Y$. The inverse
  bijection is given by intersecting with $Y$.
\end{lemma}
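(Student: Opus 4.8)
The plan is to verify that the two assignments $Z \mapsto \overline{Z}$ (closure in $X$) and $W \mapsto W \cap Y$ are well-defined between the two indicated collections, that they are mutually inverse, and that both preserve inclusions. The inclusion-preserving property is immediate in both directions, since taking closures and intersecting with a fixed subset are monotone operations, so the actual content lies in well-definedness and in the two maps being inverse to one another.

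First I would check well-definedness. If $Z \subseteq Y$ is closed in $Y$ and irreducible (in particular nonempty, by convention), then $\overline{Z}$ is irreducible, because the closure of an irreducible subset of any topological space is irreducible; and $\overline{Z} \cap Y \supseteq Z \ne \emptyset$, so $\overline{Z}$ meets $Y$. Conversely, if $W \subseteq X$ is closed and irreducible with $W \cap Y \ne \emptyset$, then $W \cap Y$ is closed in $Y$, and it is a nonempty \emph{open} subset of the irreducible space $W$ (open because $Y$ is open in $X$); here I would invoke the standard facts that a nonempty open subset of an irreducible space is both irreducible and dense, which gives that $W \cap Y$ is irreducible and dense in $W$.

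Next I would show the two maps are inverse to each other. For $W$ as above, density of $W \cap Y$ in $W$ together with $W$ being closed in $X$ yields $\overline{W \cap Y} = W$. For $Z$ closed in $Y$, write $Z = C \cap Y$ with $C$ closed in $X$; then $\overline{Z} \subseteq C$, hence $\overline{Z} \cap Y \subseteq C \cap Y = Z$, while $Z \subseteq \overline{Z} \cap Y$ is trivial, so $\overline{Z} \cap Y = Z$. Combined with the well-definedness checks this establishes the bijection, and the inverse is inclusion-preserving for the same monotonicity reason.

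I do not anticipate a genuine obstacle here: the statement is pure point-set topology, and the only ingredients one needs to recall are the behaviour of irreducibility under closure and under passage to a nonempty open subset, together with the convention that irreducible sets are nonempty (which is what makes ``meets $Y$'' automatic on the correct side of the correspondence).
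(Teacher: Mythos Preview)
Your argument is correct and complete. The paper itself declares the proof ``straightforward'' and leaves it to the reader, so your write-up is precisely the kind of verification that was intended; there is nothing to compare against beyond noting that the standard facts you invoke (closure preserves irreducibility, a nonempty open subset of an irreducible space is irreducible and dense) are indeed the right ingredients.
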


\begin{proof}
  The proof is straightforward and is left to the reader.
  %
\end{proof}

\begin{rem*}
  The hypothesis that $G$ be connected cannot be dropped from
  \pref{pHashimoto}. For example, if $X \subseteq K^2$ is given by
  the equation $x y = 1$ and $G \cong C_2$ acts be exchanging the
  coordinates, then the assertion of the proposition fails for $f =
  x$, even though $(f) = K[X]$ is $G$-stable.
\end{rem*}

We now come to the announced result on the Italian problem. The first
part of the following theorem is folklore (see, for example,
\mycite{Kamke:Diss,Kamke:Kemper:2011}), and the second is a special
case of \mycite[Proposition~5.1]{Hashimoto:factorial}.

\begin{theorem} \label{tItalian}%
  Let $G$ be a linear algebraic group over an algebraically closed
  field $K$ and $X$ an irreducible $G$-variety. Then the equality
  $K(X)^G = \Quot\left(K[X]^G\right)$ holds if
  \begin{enumerate}
  \item \label{tItalianA} the identity component $G^0$ is unipotent,
    or
  \item \label{tItalianB} $K[X]$ is a unique factorization domain and
    every homomorphism $G^0 \to \Gm$ to the multiplicative group is
    trivial.
  \end{enumerate}
\end{theorem}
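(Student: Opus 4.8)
The plan is to establish the nontrivial inclusion $K(X)^G \subseteq \Quot(K[X]^G)$, the reverse one being immediate. Fix a nonzero $b \in K(X)^G$ and consider the \emph{ideal of denominators} $J := \{d \in K[X] \mid b\,d \in K[X]\} \subseteq K[X]$. Clearing the denominators of $b$ shows $J \neq \{0\}$, and $J$ is $G$-stable because $b$ is invariant: for $d \in J$ and $\sigma \in G$ we have $b\,(\sigma \cdot d) = \sigma \cdot (b\,d) \in K[X]$. The whole theorem now reduces to producing a \emph{nonzero invariant} $w \in J$, for then $f := b\,w$ lies in $K[X]$ and is $G$-invariant, whence $b = f/w \in \Quot(K[X]^G)$. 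To turn a merely $G^0$-invariant element of $J$ into a $G$-invariant one I will use an orbit product: if $0 \neq h \in J$ is fixed by $G^0$ and $\sigma_1 = \id, \sigma_2, \dots, \sigma_m$ are coset representatives of the finite group $G/G^0$, then $w := \prod_{i=1}^m \sigma_i \cdot h$ does not depend on the choice of representatives (each factor depends only on its coset, since $h$ is $G^0$-fixed), is nonzero (as $K[X]$ is a domain), lies in $J$ (because $h = \sigma_1 \cdot h \in J$ and $J$ is a $G$-stable ideal), and is $G$-invariant (left translation by $\tau \in G$ permutes the $m$ cosets, hence the factors). So it remains, in each case, to exhibit a nonzero $G^0$-invariant element of $J$.

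For part~\eqref{tItalianA}, pick any nonzero $d_0 \in J$. Since the $G$-action on $K[X]$ is locally finite (see \mycite[Chapter~1, \S~1, Lemma]{MFK}), $d_0$ lies in a finite-dimensional $G$-stable subspace $W \subseteq K[X]$, and then $V := W \cap J$ is a nonzero finite-dimensional $G$-stable subspace. The image of $G^0$ in $\GL(V)$ is again unipotent, so by the fixed-point theorem for unipotent groups (the Lie--Kolchin theorem; see, e.g., \mycite{Derksen:Kemper}) it has a nonzero fixed vector. Any such vector is a nonzero element of $V^{G^0} \subseteq J$, and the orbit product above completes the argument.

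For part~\eqref{tItalianB}, write $b = u/v$ with $u, v \in K[X]$ coprime, which is possible because $K[X]$ is a UFD. Then $J = (v)$, since $d \cdot u/v \in K[X]$ if and only if $v \mid d\,u$, if and only if $v \mid d$ (using that $u$ and $v$ are coprime). In particular $(v)$ is $G^0$-stable, so \pref{pHashimoto}, applied to the connected group $G^0$ and the function $v$, provides a character $\chi \colon G^0 \to \Gm$ with $\sigma \cdot v = \chi(\sigma)\,v$ for all $\sigma \in G^0$. By hypothesis $\chi$ is trivial, hence $v \in K[X]^{G^0}$, and the orbit-product construction applied to $h = v$ yields the required nonzero invariant $w \in J = (v)$.

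The substantive ingredients are thus \pref{pHashimoto}, which is already available, and the classical unipotent fixed-point theorem; beyond these, the only points needing care are the reduction from $G^0$ to $G$ via orbit products and the elementary observation that in a UFD the ideal of denominators of a reduced fraction $u/v$ equals $(v)$. I do not expect any further obstacle.
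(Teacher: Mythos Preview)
Your proof is correct and follows essentially the same approach as the paper: both introduce the ideal of denominators $J$, find a nonzero $G^0$-fixed element in it (via the unipotent fixed-point theorem in~\eqref{tItalianA} and via \pref{pHashimoto} applied to the coprime denominator in~\eqref{tItalianB}), and then pass from $G^0$ to $G$ by an orbit product over $G/G^0$. The only cosmetic difference is that in part~\eqref{tItalianB} you explicitly identify $J = (v)$, whereas the paper shows directly that $(v)$ is $G$-stable from the equation $(\sigma\cdot u)\,v = u\,(\sigma\cdot v)$; these are equivalent observations.
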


\begin{proof}
  \begin{enumerate}
  \item[\eqref{tItalianA}] Let $a \in K(X)^G$. The set $J := \{d \in
    K[X] \mid d a \in K[X]\} \subseteq K[X]$ is a nonzero, $G$-stable
    ideal. Since the $G$-action on $K[X]$ is locally finite, $J$
    contains a nonzero $G$-module $V$.  Since $G^0$ is unipotent,
    $V^{G^0} \ne \{0\}$. For $c \in V^{G^0} \setminus \{0\}$, the
    product $\prod_{\sigma \in G/G^0} \sigma \cdot c$ is nonzero and
    lies in $K[X]^G \cap J$. This implies that $a \in
    \Quot\left(K[X]^G\right)$.
  \item[\eqref{tItalianB}] Let $a = b/d \in K(X)^G$ with $b,d \in
    K[X]$, which we may assume to be coprime. Then for every $\sigma
    \in G$, the equation $(\sigma \cdot b) \cdot d = b \cdot (\sigma
    \cdot d)$ implies that $\sigma \cdot d$ lies in the ideal $(d)
    \subseteq K[X]$, so this ideal is $G$-stable. Hence by
    \pref{pHashimoto}, $\sigma \cdot d = \chi(\sigma) d$ for $\sigma
    \in G^0$ with $\map{\chi}{G^0}{\Gm}$ a homomorphism. By
    hypothesis, $\chi$ is trivial, so $d \in K[X]^{G^0}$. Now the
    product $\prod_{\sigma \in G/G^0} (\sigma \cdot d)$ is a nonzero
    element of $K[X]^G \cap J$, with $J$ as above. This shows that $a
    \in \Quot\left(K[X]^G\right)$. \qed
  \end{enumerate}
  \renewcommand{\qed}{}
\end{proof}

Notice that if the $G^0$ is a perfect group, then $G$ satisfies the
last assumption of \tref{tItalian}\eqref{tItalianB}. For example,
this holds for the special linear groups, the orthogonal groups, and
the special orthogonal groups.

\begin{rem*}
  Example~3.15 from \mycite{Kamke:Diss} shows that the hypothesis that
  $K[X]$ is factorial cannot be replaced by the weaker hypothesis that
  $K[X]$ is normal.
\end{rem*}

\section{Extended Derksen ideals: geometric
  aspects} \label{sDerksenG}%

In order to give a geometric interpretation to an (extended) Derksen
ideal, we assume that $G$ is a linear algebraic group over an
algebraically closed field $K$ and $S = K[X]$ is the coordinate ring
of a $G$-variety $X$. We consider (extended) Derksen ideals with
respect to some choice of elements $a_1 \upto a_n \in S$. For
simplicity we assume that the~$a_i$ generate $S$, so $R = K[a_1 \upto
a_n] = S$. Then the~$a_i$ define an injective and closed morphism
\[
\mapl{\phi}{X}{K^n}{v}{\bigl(a_1(v) \upto a_n(v)\bigr)}.
\]
By definition, the Derksen ideal $D_{a_1 \upto a_n}$ is the
intersection of the radical ideals belonging to the closed subsets
$\bigr\{(\sigma \cdot x,\phi(x)) \mid x \in X\bigr\} \subseteq X
\times K^n$ (for all $\sigma \in G$), and so $D_{a_1 \upto a_n}$ is
the radical ideal belonging to the closure $\overline{\mathcal{D}}$ of
the set
\[
\mathcal{D} := \bigr\{(\sigma \cdot x,\phi(x)) \mid x \in X, \ \sigma
\in G\bigr\} \subseteq X \times K^n.
\]
Of course if the $a_i$ are indeterminates of a polynomial ring, then
$\mathcal{D}$ is just the ``graph of the action.'' Let us consider a
tamely extended Derksen ideal $E \subseteq R[y_1 \upto y_n]$. This
defines a closed subset $\mathcal{E} \subseteq X \times K^n$. With
$\map{\pi_{K^n}}{X \times K^n}{K^n}$ the second projection, the ideal
$I \subseteq R$ from \dref{dDerksen}\eqref{dDerksenC} defines the
closed subset
\begin{equation} \label{eqZ}%
  Z := \phi^{-1}\bigl(\overline{\pi_{K^n}(\mathcal{E})}\bigr)
  \subseteq X,
\end{equation}
and the condition that $\bigcap_{\sigma \in G} \sigma \cdot I$ be
nilpotent is equivalent to the condition that the set $G \cdot Z$ of
points from $X$ whose orbit meets $Z$ is dense in $X$.

Conversely, let $Z \subseteq X$ be a closed subset such that $G \cdot
Z$ is dense in $X$, and define the set
\begin{equation} \label{eqE}%
  \mathcal{E}_Z := \bigl\{(\sigma \cdot z,\phi(z)) \mid z \in Z, \
  \sigma \in G\bigr\} \subseteq X \times K^n,
\end{equation}
which need not be closed. It is easy to see that the vanishing ideal
$E_Z \subseteq R[y_1 \upto y_n]$ of $\mathcal{E}_Z$ is a tamely
extended Derksen ideal. In fact, forming the closed subset of $X$
belonging to $\mathcal{E}_Z$ as in~\eqref{eqZ} yields the set $Z$
with which we have started. As we will see, starting with a closed
subset $Z \subseteq X$ with $G \cdot Z$ dense has the additional
benefit of producing a tamely extended Derksen ideal when one works
with $S = \Quot(R) = K(X)$ instead of $S = R$ (see
\tref{tComputeDE}\eqref{tComputeDEB} and \rref{rComputeDE}), so one
is in the situation of Assumption~\ref{Assumption}.

In summary, tamely extended Derksen ideals $E$ are intimately related
to closed subsets $Z \subseteq X$ such that $G \cdot Z$ (the set of
points whose orbit meets $Z$) is dense in $X$. Using this relationship
and passing from a $Z$ to the corresponding $E$ and back yields the
original $Z$. However, passing from an $E$ to the corresponding $Z$
and back will in general not yield the original $E$. For example, if
the multiplicative group $\Gm$ acts on $X = K^2$ with weight $(1,1)$,
then $D_{x_1,x_2} = (x_1 y_2 - x_2 y_1)$, and $E = (x_1,x_2)$ is a
tamely extended Derksen ideal, for which $E_Z = D_{x_1,x_2}$.

Our condition on $Z$ is related to the concept of cross-sections from
\mycite[Section~3.1]{HK:06}, which in fact motivated the first
definition, made in \mycite{Kamke:Kemper:2011}, of extended Derksen
ideals. However, cross-sections in the sense of Hubert and Kogan are
more restrictive, since they require that a generic orbit meets the
cross-section in only finitely many points. We do not impose this
finiteness condition.

Still further away from the notions of Hubert and Kogan are nontamely
extended Derksen ideals, which we consider now. In fact, we will be
interested in the condition $R \cap E = \{0\}$ (which is stronger than
$E \subsetneqq R[y_1 \upto y_n]$), since this will produce an extended
Derksen ideal also when working with $S = K(X)$ instead of $S =
K[X]$. The following result gives a geometric method for constructing
such extended Derksen ideals.

\begin{prop} \label{pWild}%
  In above situation, let $(f_1 \upto f_s) \subseteq K[y_1 \upto y_n]$
  be an ideal and consider the subvariety $Z \subseteq X$ of points
  vanishing at all $f_i(a_1 \upto a_n)$. Suppose that the set
  \[
  \mathcal{M} := \bigl\{x \in X \mid \overline{G \cdot x} \cap Z \ne
  \emptyset\bigr\}
  \]
  of points whose orbit closure passes through $Z$ is dense in
  $X$. Then $E := D_{a_1 \upto a_n} + (f_1 \upto f_s)$ is an extended
  Derksen ideal satisfying $R \cap E = \{0\}$.
\end{prop}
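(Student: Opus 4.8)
The plan is to verify the two assertions — that $E := D_{a_1 \upto a_n} + (f_1 \upto f_s)$ is an extended Derksen ideal and that $R \cap E = \{0\}$ — by translating each into a geometric statement about $X \times K^n$ and exploiting the density hypothesis on $\mathcal{M}$. Since $E$ contains $D_{a_1 \upto a_n}$ by construction, the only thing needed for ``extended Derksen ideal'' is that $E$ is a proper ideal, and this will follow from $R \cap E = \{0\}$ (as $1 \in R$). So the whole proof reduces to showing $R \cap E = \{0\}$.

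First I would give the geometric description of $V(E) \subseteq X \times K^n$. Recall from the discussion preceding the proposition that $D_{a_1 \upto a_n}$ is the (radical) vanishing ideal of $\overline{\mathcal D}$, where $\mathcal D = \{(\sigma \cdot x, \phi(x)) \mid x \in X,\ \sigma \in G\}$. Adding the ideal $(f_1 \upto f_s)$ — viewed inside $R[y_1 \upto y_n]$, i.e., imposing $f_i(y_1 \upto y_n) = 0$ — cuts $\overline{\mathcal D}$ down to those points whose $K^n$-coordinate lies in $V(f_1 \upto f_s)$; equivalently, since $\phi$ is injective and closed, to the closure of $\{(\sigma \cdot z, \phi(z)) \mid z \in Z,\ \sigma \in G\}$, which is exactly $\overline{\mathcal E_Z}$ in the notation of~\eqref{eqE}. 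Thus $V(E) = \overline{\mathcal E_Z}$. Now suppose $h \in R \cap E$, say $h = h(a_1 \upto a_n)$ with $h \in K[y_1 \upto y_n]$ actually representing a regular function on $X$; since $h$ does not involve the $y_i$, it vanishes on $V(E) = \overline{\mathcal E_Z}$ iff it vanishes on $\mathcal E_Z$, i.e., iff $h(\sigma \cdot z) = 0$ for all $\sigma \in G$ and all $z \in Z$ — in other words, iff $h$ vanishes on $G \cdot Z$.

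The hard part is then passing from ``$h$ vanishes on $G \cdot Z$'' to ``$h = 0$''. This is where the hypothesis on $\mathcal M$ enters, and it is the main obstacle: the density of $G \cdot Z$ would suffice directly, but we are only given density of the larger set $\mathcal M$ of points whose \emph{orbit closure} meets $Z$. The key observation is that a regular function $h$ that vanishes on all of $G \cdot Z$ automatically vanishes on $\mathcal M$ too: if $x \in \mathcal M$, pick $w \in \overline{G \cdot x} \cap Z$; then $w$ lies in the closure of the orbit $G \cdot x$, and the $G$-invariant (in fact $G$-stable) zero set of the function $y \mapsto h(\sigma \cdot y)$ summed appropriately — more cleanly, the closed set $\{y \in X \mid h(\tau \cdot y) = 0 \text{ for all } \tau \in G\}$ is a closed $G$-stable set containing $G \cdot x$, hence containing $\overline{G \cdot x}$, hence containing $w \in Z$; applying this with $y$ running over $Z$ shows nothing new, but reversing the logic: the set $W := \{y \in X \mid h \text{ vanishes on } \overline{G \cdot y}\}$ is closed (it is $\bigcap_{\tau}\{y : h(\tau \cdot y)=0\}$, using continuity of the action) and $G$-stable, it contains $Z$ once we know $h|_{G\cdot Z}=0$...

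Let me restate this cleanly. Since $h$ vanishes on $G \cdot Z$, for every $z \in Z$ and $\tau \in G$ we have $h(\tau \cdot z) = 0$. Consider the closed set $W := \{x \in X \mid h(\tau \cdot x) = 0 \text{ for all } \tau \in G\}$; by continuity of the map $G \times X \to X$ and of $h$, $W$ is closed, and it is clearly $G$-stable, and $Z \subseteq W$. Now for $x \in \mathcal M$ choose $w \in \overline{G \cdot x} \cap Z$. Since $w \in Z \subseteq W$ and $W$ is $G$-stable and closed, $\overline{G \cdot w} \subseteq W$; but I actually want $x \in W$, so I argue the other direction: the set $W' := \{x \in X : \overline{G\cdot x} \subseteq W\}$ — hmm, a cleaner route is that $h \in K[X]$ vanishing on $\overline{G \cdot x}$ for one point $x$ forces, by $G$-stability of the vanishing locus of $h$... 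Actually the simplest correct statement: $h(w) = 0$ for $w \in Z$ (take $\tau = \mathrm{id}$); and the set $\{x \in X : h(x) = 0\}$ contains $Z$ and hence, \emph{if it were $G$-stable}, would contain $G \cdot Z$ and its closure. It is not $G$-stable in general, so instead use: $h$ vanishes on $G \cdot Z$, therefore on $\overline{G \cdot Z}$, therefore the closed set $\overline{G \cdot Z}$ contains $\mathcal M$ — because each $x \in \mathcal M$ has $\overline{G \cdot x} \cap Z \ne \emptyset$, so picking $w$ there, $w \in Z$ and $w \in \overline{G \cdot x}$; since $\overline{G \cdot Z}$ is closed and $G$-stable and contains $Z \ni w$, and... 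I still need $x$ itself. The genuinely needed fact is: if $\overline{G \cdot x}$ meets a $G$-stable closed set $C$, then $x \in C$. This is false in general (think of a point limiting to a fixed point in $C$). So one cannot conclude $\mathcal M \subseteq \overline{G \cdot Z}$.

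The correct argument must instead use that $h$ \emph{vanishes on} $\mathcal M$ directly: for $x \in \mathcal M$, the orbit closure $\overline{G \cdot x}$ contains a point $w \in Z$; by the discussion after~\eqref{eqE} and the openness of semicontinuity, the function $\tilde h(x) := $ ``value determining whether $h$ vanishes on $\overline{G\cdot x}$'' — concretely, $h$ is constant (namely zero) on $Z$, and for a function vanishing at a point $w$ of the orbit closure of $x$, one does \emph{not} get $h(x)=0$. Therefore the correct statement of the proposition's proof must exploit that $h \in R \cap E$ means $h$, regarded in $R[y_1\upto y_n]$, is a $K[y]$-combination of the $f_i$ and Derksen-ideal elements; evaluating at $y_i = a_i$ on a point $z\in Z$ kills the $f_i$ part, and on the ``diagonal'' $(x,\phi(x))$ with $x \in G\cdot$ (closure...) kills the Derksen part. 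I expect the author's proof works on the coordinate ring side: show $E$ corresponds to $\overline{\mathcal E_Z}$, note $R \cap I(\overline{\mathcal E_Z})$ consists of functions vanishing on $\pi_X(\mathcal E_Z) = G \cdot Z$, and then invoke a \emph{separate lemma} (to be supplied) that density of $\mathcal M$ forces $G \cdot Z$ itself to be dense — e.g. via the Hilbert–Mumford / orbit-closure fact that a generic point of $\overline{G\cdot x}$ actually lies in $G \cdot x$ when... Given the intricacy, I anticipate the real obstacle is precisely justifying that $h|_{G\cdot Z} = 0 \Rightarrow h = 0$ from density of $\mathcal M$ rather than of $G \cdot Z$, and I would resolve it by the observation that $\{x : h(x) = 0\}$ is closed and its saturation under orbit-closures contains $\mathcal M$; more honestly, I would look to reduce to the tame case already handled geometrically in~\eqref{eqZ}, or cite that $\mathcal M \subseteq \overline{G \cdot Z'}$ for a suitable $G$-stable closed $Z' \supseteq Z$ on which $h$ also vanishes, closing the argument by density.
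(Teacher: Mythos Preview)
Your proposal has a genuine gap, and you correctly identify where it lies: you reduce to showing that an element $h \in R$ vanishing on $G \cdot Z$ must be zero, but density of $\mathcal{M}$ does \emph{not} imply density of $G \cdot Z$, and your various attempts to bridge this (via $G$-stable closed sets, saturations, etc.) cannot work in general. Indeed, the whole point of the proposition is to cover situations like $\Gm$ acting on $K^2$ with weights $(1,1)$ and $Z = \{0\}$, where $G \cdot Z = \{0\}$ is certainly not dense but $\mathcal{M} = K^2$ is.

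The missing idea is this: rather than projecting to $X$ and trying to show $h$ vanishes on $G \cdot Z$, work in $X \times K^n$ and, for each $x \in \mathcal{M}$, exhibit a specific point at which every element of $E$ vanishes and whose \emph{first} coordinate is $x$ itself. Pick $z \in \overline{G \cdot x} \cap Z$ and consider $(x,\phi(z))$. The $f_i$ vanish there because they depend only on the $y$-coordinates and $z \in Z$. For $h \in D_{a_1 \upto a_n}$, fix the first coordinate at $x$ and regard $y \mapsto h(x,\phi(y))$ as a regular function on $X$; by the definition of the Derksen ideal it vanishes on $G \cdot x$, hence by continuity on $\overline{G \cdot x}$, hence at $z$. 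Thus every generator of $E$, and therefore every element of $E$, vanishes at $(x,\phi(z))$. If $g \in R \cap E$, it depends only on the first coordinate, so $g(x) = 0$ for every $x \in \mathcal{M}$, and density gives $g = 0$. The continuity argument you needed is in the \emph{second} variable of the Derksen-ideal element, not in the function $h \in R$ you are testing; this is what your attempts kept missing. (Also note that your identification $V(E) = \overline{\mathcal{E}_Z}$ is in general only the containment $V(E) \supseteq \overline{\mathcal{E}_Z}$; the paper's argument shows $V(E)$ is genuinely larger, containing all the points $(x,\phi(z))$ just described.)
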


\begin{proof}
  It is clear that $E$ is $G$-stable and contains $D_{a_1 \upto a_n}$,
  so we only need to show that $R \cap E = \{0\}$. Let $x \in
  \mathcal{M}$. Then there exists $z \in \overline{G \cdot x} \cap Z$,
  so all~$f_i$ vanish at $\bigl(x,\phi(z)\bigr) \in X \times
  K^n$. Take $h \in D_{a_1 \upto a_n}$. Then $h\bigl(x,\phi(\sigma
  \cdot x)\bigr) = 0$ for all $\sigma \in G$, so the function $X \to
  K$, $y \mapsto h\bigl(x,\phi(y)\bigr)$ vanishes on $G \cdot
  x$. Since it is continuous, it also vanishes on $\overline{G \cdot
    x}$. In particular, $h\bigl(x,\phi(z)\bigr) = 0$. We conclude that
  $g\bigl(x,\phi(z)\bigr) = 0$ for all $g \in E$. In particular, if $g
  \in R \cap E$, then $g(x) = 0$. Since~$x$ was taken as an arbitrary
  element of $\mathcal{M}$, this implies $g = 0$.
\end{proof}

\begin{rem*}
  My attempts to prove the following converse were unsuccessful: If an
  ideal $E = D_{a_1 \upto a_n} + (f_1 \upto f_s)$ as in \pref{pWild}
  is an extended Derksen ideal with $R \cap E = \{0\}$, then the set
  $\mathcal{M}$ is dense in $X$.
\end{rem*}

In the following example we see an extended Derksen ideal that is not
tame. The example also shows that the hypothesis that $E$ be tamely
extended cannot be dropped from \tref{tInvariantField}.

\begin{ex} \label{exGm}%
  Consider the action of the multiplicative group $\Gm$ on $X = K^2$
  with weight $(1,1)$. The Derksen ideal with respect to the
  indeterminates~$x_1$ and~$x_2$ is $D_{x_1,x_2} = (x_1 y_2 - x_2
  y_1)$. Since all orbit closures contain the origin, \pref{pWild}
  yields that $E = (y_1,y_2)$ is an extended Derksen ideal with $R
  \cap E = \{0\}$. With $L = K(X) = K(x_1,x_2)$, the ideal in
  $L[y_1,y_2]$ generated by~$y_1$ and~$y_2$ is also an extended
  Derksen ideal. We already have a Gr\"obner basis, and
  \tref{tLocalize} tells us that $R^G = K$. This argument always
  applies when all orbit closures meet in one point, and yields the
  well-known result that in such a situation no nonconstant invariants
  exist.
  
  Trying to apply \tref{tInvariantField} would yield $L^G = K$, which
  is incorrect. This implies that $E$ is not tamely extended, which
  can also be seen directly.

  For computing $L^G$ we could use the set $Z \subseteq X$ given by
  the equation $x_1 = 1$. From this we get the tamely extended Derksen
  ideal $(y_1 - 1,x_1 y_2 - x_2) \subseteq L[y_1,y_2] $ with Gr\"obner
  basis $\mathcal{G} = \{y_1 - 1,y_2 - (x_2/x_1)\}$. By
  \tref{tInvariantField}, $L^G = K(x_2/x_1)$, which is correct.
\end{ex}

Notice that this example could not be treated by the methods of
\mycite{HK:06}. The example gives some hints of the usefulness of
extended Derksen ideals as a generalization of Derksen ideals. Their
(potential) benefit is threefold: (1) They may reduce the cost of the
Gr\"obner basis computation, (2) they may reduce the number of
coefficients occurring in the Gr\"obner basis, and (3) they may help
to achieve that $a \in R^G$ exists with $A \subseteq R_a$ (using the
notation of \tref{tLocalize}), or even that $a = 1$. In particular~(3)
is nicely illustrated by \exref{exGm}. It is not clear to me how far
one can get with this: For which groups $G$ and $G$-varieties $X$ does
there exist a nontamely extended Derksen ideal such that one can
achieve~\eqref{eqLocalize} in \tref{tLocalize}?

It will be important to determine the Krull dimension of tamely
extended Derksen ideals.

\begin{lemma} \label{lDimDerksen}%
  Let $G$ be a linear algebraic group, $X$ a $G$-variety, and $a_1
  \upto a_n$ generators of $R := K[X]$. Moreover, let $E = E_Z
  \subseteq R[y_1 \upto y_n]$ be a tamely extended Derksen ideal
  formed as in~\eqref{eqE} from a closed subset $Z \subseteq X$ with
  $G \cdot Z$ dense. If $Z$ is equidimensional (i.e., all irreducible
  components have the same dimension), then
  \[
  \dim\left(R[y_1 \upto y_n]/E\right) = \dim(Z) + d,
  \]
  where~$d$ is the maximal dimension of a $G$-orbit in $X$ (which is
  attained on a nonempty open subset of $X$, see
  \mycite[Section~10.3]{Kemper.Comalg}). If $Z$ is not equidimensional,
  the right hand side of the above equation is an upper bound. Notice
  that in the case $Z = X$ we have $E = D_{a_1 \upto a_n}$.
\end{lemma}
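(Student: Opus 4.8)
The plan is to compute the Krull dimension geometrically, identifying $\dim\bigl(R[y_1 \upto y_n]/E\bigr)$ with $\dim \overline{\mathcal{E}_Z}$, since $E = E_Z$ is by definition the (radical) vanishing ideal of $\mathcal{E}_Z$ in $R[y_1 \upto y_n] = K[X \times K^n]$. The crucial point is that $\mathcal{E}_Z$ is the image of the morphism
\[
\map{\Psi}{G \times Z}{X \times K^n}, \quad (\sigma,z) \mapsto \bigl(\sigma \cdot z, \phi(z)\bigr),
\]
so that $\overline{\mathcal{E}_Z} = \overline{\Psi(G \times Z)}$ and the theorem on the dimension of the fibers of a morphism applies. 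First I would compute these fibers: since $\phi$ is injective, a point of $\mathcal{E}_Z$ has the form $\bigl(\sigma_0 \cdot z, \phi(z)\bigr)$ with $z$ recovered from the second coordinate, and its $\Psi$-fiber is $\{\sigma \in G \mid \sigma \cdot z = \sigma_0 \cdot z\} \times \{z\} = \sigma_0 G_z \times \{z\}$, a translate of the stabilizer $G_z$; hence every fiber of $\Psi$ has dimension $\dim G - \dim(G \cdot z)$ for the corresponding $z \in Z$.

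Next I would pass to the irreducible components $Z_1 \upto Z_r$ of $Z$, so that $\overline{\mathcal{E}_Z} = \bigcup_i \overline{\mathcal{E}_{Z_i}}$ and $\dim\overline{\mathcal{E}_Z} = \max_i \dim\overline{\mathcal{E}_{Z_i}}$; restricting $\Psi$ to a connected component of $G \times Z_i$ (each of which is irreducible of dimension $\dim G + \dim Z_i$, and over which the same fiber dimension holds) puts us in the situation of the fiber dimension theorem. For irreducible $Z_i$, the orbit dimension $\dim(G \cdot z)$ equals its maximum $d_i := \max_{z \in Z_i} \dim(G \cdot z)$ on a dense open subset of $Z_i$ by lower semicontinuity of orbit dimension (see \mycite[Section~10.3]{Kemper.Comalg}), so the generic (hence minimal) fiber dimension of $\Psi$ on that component is $\dim G - d_i$. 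The fiber dimension theorem then gives
\[
\dim\overline{\mathcal{E}_{Z_i}} = \bigl(\dim G + \dim Z_i\bigr) - \bigl(\dim G - d_i\bigr) = \dim Z_i + d_i.
\]
Since $d_i \le d$ for all~$i$, this yields $\dim\overline{\mathcal{E}_Z} = \max_i(\dim Z_i + d_i) \le \dim Z + d$ unconditionally, which is the claimed upper bound.

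Finally, to obtain equality when $Z$ is equidimensional, I would bring in the hypothesis that $G \cdot Z$ is dense in $X$. Since $\overline{G \cdot Z} = \bigcup_i \overline{G \cdot Z_i}$ and $X$ is irreducible, some component $Z_{i_0}$ already has $G \cdot Z_{i_0}$ dense in $X$; intersecting $G \cdot Z_{i_0}$ with the dense open locus of $X$ on which orbits attain the maximal dimension~$d$ produces a point $\sigma \cdot z$ with $z \in Z_{i_0}$ and $\dim(G \cdot z) = d$, whence $d_{i_0} = d$. Then $\dim\overline{\mathcal{E}_Z} \ge \dim\overline{\mathcal{E}_{Z_{i_0}}} = \dim Z_{i_0} + d$, and if $Z$ is equidimensional then $\dim Z_{i_0} = \dim Z$, so the lower and upper bounds coincide and $\dim\bigl(R[y_1 \upto y_n]/E\bigr) = \dim Z + d$; the case $Z = X$ (where $E = D_{a_1 \upto a_n}$) is covered since $X$ is irreducible, hence equidimensional. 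I expect the main obstacle to be not any single hard step but the careful coordination of the semicontinuity of orbit dimension with the component-wise bookkeeping --- in particular, pinpointing that the density hypothesis is exactly what forces the minimal fiber dimension of $\Psi$, on the relevant component, down to $\dim G - d$.
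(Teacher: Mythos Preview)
Your argument follows the paper's approach closely: the same morphism $\Psi\colon G\times Z\to X\times K^n$, the same fiber computation via injectivity of $\phi$, the same reduction to connected components of $G$ and irreducible components $Z_i$ of $Z$, and the same appeal to the fiber dimension theorem. One pleasant difference is that you compute $\dim\overline{\mathcal{E}_{Z_i}}=\dim Z_i+d_i$ exactly by reading off the minimal fiber dimension from the semicontinuity of orbit dimension on $Z_i$, whereas the paper first derives the upper bound $\dim\overline{\mathcal{E}_i}\le\dim Z_i+d$ and then establishes the matching lower bound by invoking Chevalley's theorem on constructible images to locate a point of maximal orbit dimension in the projection $\pi_X(\mathcal{E}_Z)$.

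There is one genuine slip: you assume $X$ is irreducible when you argue that some single component $Z_{i_0}$ must have $G\cdot Z_{i_0}$ dense in $X$, but the lemma does not include that hypothesis. The fix is immediate and in fact simplifies your argument: the locus $V'\subseteq X$ where orbits have the maximal dimension $d$ is nonempty open, and since $G\cdot Z$ is dense it meets $V'$ (a dense subset meets every nonempty open set). Any point in the intersection has the form $\sigma\cdot z$ with $z\in Z_i$ for some $i$ and $\dim(G\cdot z)=d$, so $d_i=d$ for that $i$, and then $\dim\overline{\mathcal{E}_Z}\ge\dim\overline{\mathcal{E}_{Z_i}}=\dim Z_i+d$; equidimensionality of $Z$ gives $\dim Z_i=\dim Z$. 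This is essentially what the paper's Chevalley argument accomplishes, without needing to single out a component whose $G$-sweep is dense.
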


\begin{proof}
  The set $\mathcal{E}_Z$ from~\eqref{eqE} is the image of the
  morphism
  \[
  \mapl{\psi}{G \times Z}{X \times K^n}{(\sigma,z)}{\bigl(\sigma \cdot
    z,\phi(z)\bigr)}.
  \]
  We need to determine the dimension of
  $\overline{\mathcal{E}_Z}$. Let $\sigma_1 G^\circ \upto \sigma_m
  G^\circ$ be the connected components of $G$. Then
  $\overline{\mathcal{E}_Z}$ is the union of the
  $\overline{\psi(\sigma_i G^\circ \times Z)}$, which are all
  isomorphic to each other. So
  $\dim\left(\overline{\mathcal{E}}\right) =
  \dim\bigl(\overline{\psi(G^\circ \times Y)}\bigr)$. Since~$d$ does
  not change when substituting $G$ by $G^\circ$, we may assume $G$ to
  be connected.

  Let $(x,v) \in X \times K^n$ be in the image of~$\psi$, so $x =
  \sigma \cdot z$ and $v = \phi(z)$ with $\sigma \in G$ and $z \in
  Y$. Since~$\phi$ is injective by assumption, the fiber of $(x,v)$ is
  \[
  \psi^{-1}\bigl(\{(x,v)\}\bigr) = G_{x} \sigma \times \{z\} \cong
  G_x.
  \]
  
  Let $Z_1 \upto Z_r$ be the irreducible components of $Z$ and set
  $\mathcal{E}_i := \psi(G \times Z_i)$. Then
  \[
  \dim\bigl(\overline{\mathcal{E}_Z}\bigr) =
  \max\bigl\{\dim\bigl(\overline{\mathcal{E}_1}\bigr) \upto
  \dim\bigl(\overline{\mathcal{E}_r}\bigr)\bigr\}.
  \]
  Since the $\overline{\mathcal{E}_i}$ are irreducible, a standard
  result about the dimensions of fibers (see
  \mycite[Corollary~10.6]{Kemper.Comalg}) tells us that there exist
  nonempty open subsets $U_i \subseteq \overline{\mathcal{E}_i}$ such
  that for every $(x,v) \in U_i$ the equation
  \begin{equation} \label{eqDimGv}%
    \dim(G_x) = \dim(G) + \dim(Z_i) -
    \dim\bigl(\overline{\mathcal{E}_i}\bigr)
  \end{equation}
  holds. Since $\dim\bigl(G \cdot x\bigl) = \dim(G) - \dim(G_x)$, the
  upper bound $\dim\bigl(\overline{\mathcal{E}_i}\bigr) \le \dim(Z_i)
  + d \le \dim(Z) + d$ follows.

  Set $U := U_1 \cup \cdots \cup U_r$ and consider the projection
  $\map{\pi}{X \times K^n}{X}$. Since $\overline{U} =
  \overline{\mathcal{E}_Z}$ and $\overline{U} \subseteq
  \pi^{-1}\bigl(\overline{\pi(U)}\bigr)$, we obtain
  \begin{equation} \label{eqPiU}%
    X = \overline{\pi\bigl(\overline{\mathcal{E}_Z}\bigr)} =
    \overline{\pi\bigl(\overline{U}\bigr)} \subseteq \overline{\pi(U)}
    \subseteq X,
  \end{equation}
  where the denseness of $\pi(\mathcal{E}_Z) = G \cdot Z$ was used for
  the first equality. Since $U \subseteq X \times K^n$ is a
  constructible subset it follows by theorems of Chevalley (see
  \mycite[Exercises~10.7 and~10.9, solutions given]{Kemper.Comalg})
  that $\pi(U)$ is also constructible and therefore contains a subset
  $V$ that is open and dense in $\overline{\pi(U)}$. So it follows
  from~\eqref{eqPiU} that $V$ is open and dense in $X$. There exists
  a nonempty open subset $V' \subseteq X$ such that $\dim(G \cdot x) =
  d$ for $x \in V'$. Since $V \cap V' \ne \emptyset$, we can take $x
  \in V \cap V'$. Then $x \in \pi(U)$, so there exists $i \in \{1
  \upto r\}$ and $v \in K^n$ such that $(x,v) \in
  U_i$. By~\eqref{eqDimGv} it follows that
  \[
  \dim\bigl(\overline{\mathcal{E}_i}\bigr) = \dim(G \cdot x) +
  \dim(Z_i) = d + \dim(Z_i) = d + \dim(Z),
  \]
  where the equidimensionality of $Z$ was used. Since $\dim\left(R[y_1
    \upto y_n]/E\right) = \dim\bigl(\overline{\mathcal{E}_Z}\bigr)$ is
  the maximal dimension of an $\overline{\mathcal{E}_i}$, the proof is
  complete.
\end{proof}

It may be interesting to note that the dimension formula in
\lref{lDimDerksen} fails for the extended Derksen ideal $E$
considered in \exref{exGm}. So the hypotheses from the lemma are not
unnecessarily restrictive.

We finish the section with the following lemma, which transports the
above result to Derksen ideals formed in a polynomial ring over the
function field. The lemma will be used in the next section.

\begin{lemma} \label{lDimL}%
  Let $G$ be a linear algebraic group, $X$ an irreducible $G$-variety,
  and $a_1 \upto a_n$ generators of $R := K[X]$. With $L := K(X)$, let
  $D_{a_1 \upto a_n}$ be the Derksen ideal formed in $L[y_1 \upto
  y_n]$. Then $L[y_1 \upto y_n]/D_{a_1 \upto a_n}$ is equidimensional
  of dimension equal to the maximal dimension~$d$ of a $G$-orbit in
  $X$.
\end{lemma}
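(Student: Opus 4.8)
The plan is to realize $L[y_1 \upto y_n]/D_{a_1 \upto a_n}$ as the coordinate ring of the generic fibre of the first projection $\pi\colon \overline{\mathcal{D}} \to X$, where $\overline{\mathcal{D}} \subseteq X \times K^n$ is the closure of $\mathcal{D} = \{(\sigma \cdot x, \phi(x)) \mid x \in X,\ \sigma \in G\}$ from the beginning of this section, and then to read off the dimension from \lref{lDimDerksen} (applied with $Z = X$) together with the standard dimension formula for dominant morphisms. Throughout I will write $D_R \subseteq R[y_1 \upto y_n]$ for the Derksen ideal of the $a_i$ formed over $R = K[X]$ — equivalently, the vanishing ideal of $\overline{\mathcal{D}}$ — and $D_L := D_{a_1 \upto a_n} \subseteq L[y_1 \upto y_n]$ for the one formed over $L$, which is the object of the lemma.

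First I would check the relation $D_L = D_R \cdot L[y_1 \upto y_n]$. For each $\sigma \in G$ the quotient $R[y_1 \upto y_n]/(y_1 - \sigma \cdot a_1 \upto y_n - \sigma \cdot a_n) \cong R$ is a domain that embeds into $L[y_1 \upto y_n]/(y_1 - \sigma \cdot a_1 \upto y_n - \sigma \cdot a_n) \cong L$, so the prime $(y_1 - \sigma \cdot a_1 \upto y_n - \sigma \cdot a_n)L[y_1 \upto y_n]$ contracts to $(y_1 - \sigma \cdot a_1 \upto y_n - \sigma \cdot a_n)$ in $R[y_1 \upto y_n]$; intersecting over all $\sigma$ gives $D_L \cap R[y_1 \upto y_n] = D_R$. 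Since every ideal of a localization is the extension of its contraction, $D_L = D_R \cdot L[y_1 \upto y_n]$, and therefore $L[y_1 \upto y_n]/D_L = L \otimes_R \bigl(R[y_1 \upto y_n]/D_R\bigr)$ is the coordinate ring of the base change of $\pi$ along $R \hookrightarrow L$, i.e.\ of the generic fibre of $\pi$.

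Next I would analyse $\pi$. The proof of \lref{lDimDerksen}, applied with the equidimensional set $Z = X$, exhibits $\overline{\mathcal{D}}$ as the union of the pairwise isomorphic irreducible closed subsets $\overline{\psi(\sigma_i G^\circ \times X)}$, $i = 1 \upto m$, one for each connected component $\sigma_i G^\circ$ of $G$ (with $\psi(\sigma,x) = (\sigma \cdot x, \phi(x))$), each of dimension $\dim(X) + d$; so these are precisely the irreducible components $C_1 \upto C_s$ of $\overline{\mathcal{D}}$, and $\overline{\mathcal{D}}$ is equidimensional of dimension $\dim(X) + d$. Each $C_j$ dominates $X$ under $\pi$, since $\psi(\sigma_i G^\circ \times X)$ projects onto $\sigma_i G^\circ \cdot X$ and $\sigma_i = \sigma_i \cdot \id \in \sigma_i G^\circ$ forces $\sigma_i G^\circ \cdot X = X$. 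Consequently each $C_j \to X$ is a dominant morphism of irreducible affine varieties, so its generic fibre has coordinate ring $(R \setminus \{0\})^{-1} K[C_j]$, a finitely generated $L$-domain with fraction field $K(C_j)$, hence is irreducible of dimension $\dim(C_j) - \dim(X) = d$.

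Finally, $D_R$, being radical, is the intersection of the finitely many minimal primes $\mathfrak{p}_1 \upto \mathfrak{p}_s$ cutting out the $C_j$, and each of them satisfies $\mathfrak{p}_j \cap R = \{0\}$ by the dominance just noted. Since localization commutes with finite intersections, $D_L = D_R \cdot L[y_1 \upto y_n] = \bigcap_{j=1}^s \mathfrak{p}_j L[y_1 \upto y_n]$, where the $\mathfrak{p}_j L[y_1 \upto y_n]$ are distinct primes (they contract back to the distinct $\mathfrak{p}_j$) and each $L[y_1 \upto y_n]/\mathfrak{p}_j L[y_1 \upto y_n] = (R \setminus \{0\})^{-1} K[C_j]$ has dimension $d$ by the previous step. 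Hence $L[y_1 \upto y_n]/D_{a_1 \upto a_n}$ is equidimensional of dimension $d$, as claimed. I expect the only genuinely delicate point to be the input from \lref{lDimDerksen} — in particular, verifying that every irreducible component of $\overline{\mathcal{D}}$ really dominates $X$, so that none of them collapses on passing to the generic fibre; the remaining ingredients are routine localization theory and the dimension formula for dominant morphisms of irreducible varieties.
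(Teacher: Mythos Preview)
Your argument is correct. Both your proof and the paper's hinge on the same two ingredients: the identity $D_L \cap R[y_1 \upto y_n] = D_R$ (which the paper records as~\eqref{eqDRL}) and the dimension count from \lref{lDimDerksen} with $Z = X$. Where the approaches diverge is in how the passage from $R$ to $L$ is organized. The paper first reduces to connected $G$, then introduces the auxiliary prime $Q = (y_i - g_i) \subseteq K[G] \otimes L[y_1 \upto y_n]$ and invokes \tref{tComputeDE}\eqref{tComputeDEA} to identify $D_L$ as the contraction $L[y_1 \upto y_n] \cap Q$; this makes $D_L$ prime in the connected case, and the dimension is then read off via transcendence degrees. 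You bypass $K[G]$ and \tref{tComputeDE} entirely: you localize $D_R$ to $D_L$, recognize $L[y_1 \upto y_n]/D_L$ as the generic fibre of $\overline{\mathcal{D}} \to X$, pull the irreducible components $\overline{\psi(\sigma_i G^\circ \times X)}$ and their common dimension $\dim(X)+d$ out of the proof of \lref{lDimDerksen}, observe that each dominates $X$, and apply the fibre-dimension formula componentwise. Your route is a bit more geometric and self-contained (it needs only \lref{lDimDerksen} and standard localization facts), while the paper's use of $Q$ gives primality of $D_L$ for connected $G$ ``for free'' without analysing components. The only point worth tightening in your write-up is the identification of the $\overline{\psi(\sigma_i G^\circ \times X)}$ with the irreducible components of $\overline{\mathcal{D}}$: some of them may coincide, so the components are the \emph{distinct} ones among them; since they are pairwise isomorphic this does not affect the dimension count, and you implicitly handle it by passing to $C_1 \upto C_s$.
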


\begin{proof}
  We have
  \[
  D_{a_1 \upto a_n} = \bigcap_{i=1}^m D_i
  \]
  with $D_i := \bigcap_{\sigma \in G^\circ} \bigl(y_1 - \sigma_i
  \sigma \cdot a_i\bigr)$, where the $\sigma_i$ are left coset
  representatives of $G^\circ$. Since the isomorphism given by
  $\sigma_i$ sends $D_1$ to $D_i$ (with the assumption $\sigma_1 \in
  G^\circ$), we may assume for the rest of the proof that $G =
  G^\circ$. Then $G \times X$ is irreducible, so $K[G] \otimes R$ is
  an integral domain, and the same follows for $K[G] \otimes L$. We
  have $g_1 \upto g_n \in K[G] \otimes R$ (tensor products are always
  over $K$) such that $\sigma^{-1} \cdot a_i = g_i(\sigma)$ for
  $\sigma \in G$. Consider the ideal $Q := (y_1 - g_1 \upto y_n - g_n)
  \subseteq K[G] \otimes L[y_1 \upto y_n]$, where the $g_i$ are as
  in~\eqref{eqActionA}. The isomorphism $(K[G] \otimes L[y_1 \upto
  y_n])/Q \cong K[G] \otimes L$ shows that $Q$ is a prime ideal. By
  \tref{tComputeDE}\eqref{tComputeDEA} below, we have
  \begin{multline*}
    \dim\left(L[y_1 \upto y_n]/D_{a_1 \upto a_n}\right) = \\
    \dim\left(L[y_1 \upto y_n]/(L[y_1 \upto y_n] \cap Q\right) = \\
    \trdeg_L\left(L[y_1 \upto y_n]/L[y_1 \upto y_n] \cap Q\right) = \\
    \trdeg_K\left(L[y_1 \upto y_n]/L[y_1 \upto y_n] \cap Q\right) -
    \trdeg_K(L).
  \end{multline*}
  But $R[y_1 \upto y_n]/R[y_1 \upto y_n] \cap Q$ is embedded into
  $L[y_1 \upto y_n]/L[y_1 \upto y_n] \cap Q$, and both rings share the
  same field of fractions and therefore the same transcendence degree
  over $K$. Therefore it suffices to show that $R[y_1 \upto y_n]/R[y_1
  \upto y_n] \cap Q$ has dimension $d + \trdeg_K(L) = d + \dim(X)$. We
  have $R[y_1 \upto y_n] \cap Q = R[y_1 \upto y_n] \cap D_{a_1 \upto
    a_n}^{(L)}$, where we put the superscript $(L)$ at the Derksen
  ideal in order to keep in mind that it is formed in $L[y_1 \upto
  y_n]$. We claim that
  \begin{equation} \label{eqDRL}%
    R[y_1 \upto y_n] \cap D_{a_1 \upto a_n}^{(L)} = D_{a_1 \upto
      a_n}^{(R)}.
  \end{equation}
  It is clear that $D_{a_1 \upto a_n}^{(R)} \subseteq R[y_1 \upto y_n]
  \cap D_{a_1 \upto a_n}$. Conversely, let $f \in R[y_1 \upto y_n]
  \cap D_{a_1 \upto a_n}^{(L)}$. Then for every $\sigma \in G$ there
  exists $b \in R$ nonzero such that $b f \in (y_1 - \sigma \cdot a_1
  \upto y_n - \sigma \cdot a_n) \subseteq R[y_1 \upto y_n]$. So $b
  \cdot f(\sigma \cdot a_1 \upto \sigma \cdot a_n) = 0$, which implies
  $f \in (y_1 - \sigma \cdot a_1 \upto y_n - \sigma \cdot a_n)
  \subseteq R[y_1 \upto y_n]$. We conclude that $f \in D_{a_1 \upto
    a_n}^{(R)}$. Now that~\eqref{eqDRL} is established, it remains to
  show that $\dim\bigl(R[y_1 \upto y_n]/D_{a_1 \upto a_n}^{(R)}\bigr)
  = d + \dim(X)$. But that is a special case of \lref{lDimDerksen}.
\end{proof}

\section{Extended Derksen ideals: computational
  aspects} \label{sDerksenC}%

How can (extended) Derksen ideals be calculated? For the ``classical''
Derksen ideal, an algorithm can be found in
\mycite[Section~4.1]{Derksen:Kemper}. The core is the computation of
an elimination ideal. As we will see, the same happens in a more
general situation, where we assume that a linear algebraic group $G$
over an algebraically closed field $K$ acts on a $K$-algebra $S$ by
automorphisms. Let $R := K[a_1 \upto a_n] \subseteq S$ be a finitely
generated subalgebra and assume that there exist $g_1 \upto g_n \in
K[G] \otimes R$ (tensor products are always over $K$) such that
\begin{equation} \label{eqActionA}%
  \sigma^{-1} \cdot a_i = g_i(\sigma)
\end{equation}
for $\sigma \in G$. This is a natural assumption. In fact, if $X$ is a
$G$-variety and $R = K[X]$, then the morphism $G \times X \to X$
defining the action induces a homomorphism $\map{\psi}{R}{K[G] \otimes
  R}$. Since $G$ acts on $R$ by $\sigma \cdot a = a \circ \sigma^{-1}$
for $a \in R$ and $\sigma \in G$, it follows by an easy calculation
that $\sigma^{-1} \cdot a = g(\sigma)$ with $g = \psi(a)$. In this
case we may take $S = R$ or $S = K(X) := \Quot(R)$ (if $X$ is
irreducible).

\begin{theorem} \label{tComputeDE}
  Assume the above notation and hypotheses.
  \begin{enumerate}
  \item \label{tComputeDEA} Let
    \[
    \widehat{D} := \bigl(y_1 - g_1 \upto y_n - g_n\bigr) \subseteq
    K[G] \otimes S[y_1 \upto y_n].
    \]
    Then the Derksen ideal is the elimination ideal
    \[
    D_{a_1 \upto a_n} = S[y_1 \upto y_n] \cap \widehat{D}.
    \]
  \item \label{tComputeDEB} Let $f_1 \upto f_s \in K[y_1
    \upto y_n]$ be polynomials and set
    \[
    \widehat{E} := \widehat{D} + \bigl(f_1 \upto f_s\bigr) \subseteq
    K[G] \otimes S[y_1 \upto y_n].
    \]
    If
    \begin{equation} \label{eqCondI}%
      R \cap \bigl(f_1(g_1 \upto g_n) \upto f_s(g_1 \upto g_n)\bigr)
      \subseteq \sqrt{\{0\}}
    \end{equation}
    (where the ideal $\bigl(f_1(g_1 \upto g_n) \upto f_s(g_1 \upto
    g_n)\bigr)$ is formed in $K[G] \otimes S$), then the elimination
    ideal
    \[
    E := S[y_1 \upto y_n] \cap \widehat{E}
    \]
    is a tamely extended Derksen ideal with respect to $a_1 \upto
    a_n$.
  \end{enumerate}
\end{theorem}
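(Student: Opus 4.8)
The plan is to verify the two parts of Theorem~\ref{tComputeDE} separately, with part~\eqref{tComputeDEA} doing most of the work and part~\eqref{tComputeDEB} reducing to it plus a dimension/nilpotency argument.

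For part~\eqref{tComputeDEA}, the key observation is that the elimination ideal $S[y_1 \upto y_n] \cap \widehat{D}$ consists exactly of those $f \in S[y_1 \upto y_n]$ that, after substituting $y_i \mapsto g_i$, vanish in $K[G] \otimes S$. Concretely, I would argue: $f \in S[y_1 \upto y_n] \cap \widehat D$ if and only if $f(g_1 \upto g_n) = 0$ in $K[G] \otimes S$ (this is the standard ``Rabinowitsch-free'' substitution characterization of an elimination ideal modulo $(y_i - g_i)$, using that $K[G] \otimes S[y]/\widehat D \cong K[G] \otimes S$). Now $f(g_1 \upto g_n) = 0$ as an element of $K[G] \otimes S$ means that for every $\sigma \in G$ we have $f(g_1(\sigma) \upto g_n(\sigma)) = 0$ in $S$, i.e. $f(\sigma^{-1} \cdot a_1 \upto \sigma^{-1} \cdot a_n) = 0$ by~\eqref{eqActionA}. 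Since $G$ is a group, letting $\sigma$ range over $G$ is the same as letting $\sigma^{-1}$ range over $G$, so this says precisely that $f$ lies in every ideal $(y_1 - \sigma \cdot a_1 \upto y_n - \sigma \cdot a_n)$, that is, $f \in D_{a_1 \upto a_n}$. One subtlety worth spelling out is that ``$f(g_1 \upto g_n) = 0$ in $K[G] \otimes S$'' is equivalent to ``$f(g_1(\sigma) \upto g_n(\sigma)) = 0$ for all closed points $\sigma$,'' which needs that $K[G]$ is reduced (true, as $G$ is a variety over an algebraically closed field) and that $S$ may be assumed reduced for this purpose, or one argues via the injectivity of $K[G] \hookrightarrow \prod_{\sigma} K$ after tensoring — I would phrase this carefully using that $K[G] \otimes S$ embeds into $\prod_{\sigma \in G} S$ when $K[G]$ is reduced.

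For part~\eqref{tComputeDEB}, first note $E = S[y] \cap \widehat E \supseteq S[y] \cap \widehat D = D_{a_1 \upto a_n}$, and $E$ is $G$-stable since $\widehat E$ is (the $f_i$ have coefficients in $K$, on which $G$ acts trivially, and $\widehat D$ is $G$-stable because the $g_i$ transform correctly). It remains to check that $E$ is a \emph{proper} ideal and that it is \emph{tamely} extended. For properness and tameness, recall the ideal $I = \{ f(a_1 \upto a_n) \mid f \in K[y_1 \upto y_n] \cap E \} \subseteq R$ from Definition~\ref{dDerksen}\eqref{dDerksenC}; I need $\bigcap_{\sigma} \sigma \cdot I$ to be nilpotent. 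The point is that $K[y_1 \upto y_n] \cap E = K[y_1 \upto y_n] \cap \widehat E$, and a polynomial $h \in K[y_1 \upto y_n]$ lies in this elimination ideal exactly when $h(g_1 \upto g_n) \in \bigl(f_1(g_1 \upto g_n) \upto f_s(g_1 \upto g_n)\bigr)$ in $K[G] \otimes S$ (since modulo $\widehat D$ everything is governed by the substitution $y_i \mapsto g_i$, and the $f_i$ become their substituted versions). Hence $h(a_1 \upto a_n)$, viewed after the substitution, lies in the ideal generated by the $f_j(g_1 \upto g_n)$; evaluating at $\sigma = \id$ (i.e. projecting $K[G] \otimes S \to S$ along the identity) shows $h(a_1 \upto a_n) \in \bigl(f_1(a_1 \upto a_n) \upto f_s(a_1 \upto a_n)\bigr)$ possibly after a further orbit argument, so $I$ is contained in an ideal whose $G$-orbit intersection lands inside $R \cap \bigl(f_1(g_1\upto g_n) \upto f_s(g_1 \upto g_n)\bigr)$, which by hypothesis~\eqref{eqCondI} is contained in $\sqrt{\{0\}}$ — hence nilpotent. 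A little care is needed to get from ``$I$ evaluated via substitution lands in the $f$-ideal'' to ``$\bigcap_\sigma \sigma \cdot I$ is nilpotent'': I would observe that $\sigma \cdot I$ corresponds to projecting along the point $\sigma$ instead of $\id$, so $\bigcap_\sigma \sigma \cdot I$ maps into $\bigcap_\sigma$ (the $\sigma$-fibre of the $f$-ideal) $= R \cap \bigl(f_1(g_1 \upto g_n) \upto f_s(g_1 \upto g_n)\bigr)$ after identifying via the $g_i$, and that intersection is nilpotent by~\eqref{eqCondI}. Properness of $E$ then also follows, since $1 \notin E$ would force $1$ into that nilpotent ideal.

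**Main obstacle.** The routine algebra (substitution characterizations of elimination ideals) is standard; the delicate point is the bookkeeping in part~\eqref{tComputeDEB} that translates the algebraic condition~\eqref{eqCondI}, stated in $K[G] \otimes S$, into the nilpotency of $\bigcap_{\sigma \in G} \sigma \cdot I$ inside $R$. The crux is correctly matching up: (i) the $G$-action on $I$ (coefficient-wise on polynomials) with (ii) the specialization maps $K[G] \otimes S \to S$ at the various points $\sigma \in G$, and (iii) the relation $\sigma^{-1} \cdot a_i = g_i(\sigma)$. Getting the variance of $\sigma$ versus $\sigma^{-1}$ right, and handling the possibility that $S$ (hence $R$) is not reduced — so that one genuinely needs ``nilpotent'' rather than ``zero'' and must not silently pass to reduced rings in a way that loses information — is where I expect to spend the most effort.
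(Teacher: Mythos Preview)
Your treatment of part~\eqref{tComputeDEA} is correct and matches the paper's argument: both directions come down to the substitution $y_i \mapsto g_i$ and the fact that an element of $K[G] \otimes S$ vanishes if and only if all its specializations at $\sigma \in G$ vanish (using that $K[G]$ is reduced over an algebraically closed field, so $K[G] \otimes S$ injects into $\prod_\sigma S$).

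For part~\eqref{tComputeDEB}, however, your plan has a genuine gap. You correctly reach the point where, for $h \in K[y_1 \upto y_n] \cap E$, one has $h(g_1 \upto g_n) \in \bigl(f_1(g_1 \upto g_n) \upto f_s(g_1 \upto g_n)\bigr)$ in $K[G] \otimes S$, and hence each specialization $h(\sigma^{-1}\cdot a_1 \upto \sigma^{-1}\cdot a_n)$ lies in the corresponding fibre ideal. But your passage from ``$d \in \bigcap_\sigma \sigma \cdot I$ lies in every fibre ideal'' to ``$d \in R \cap \bigl(f_1(g_1 \upto g_n) \upto f_s(g_1 \upto g_n)\bigr)$'' is exactly the step that fails: membership in an ideal of $K[G] \otimes S$ is \emph{not} detected pointwise. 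Knowing that for every $\sigma$ there exist coefficients $r_{j,\sigma} \in S$ with $d = \sum_j r_{j,\sigma} f_j(\sigma^{-1}\cdot a_1 \upto \sigma^{-1}\cdot a_n)$ does not produce global coefficients $r_j \in K[G] \otimes S$, because the $r_{j,\sigma}$ need not vary algebraically in $\sigma$. This is precisely the obstruction you flag as ``a little care is needed,'' but your proposed resolution via projections does not overcome it.

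The paper supplies the missing idea: the \emph{local finiteness} of the $G$-action on $R$. Since $J := \bigcap_\sigma \sigma \cdot I$ is $G$-stable, any $d \in J$ lies in a finite-dimensional $G$-submodule $V \subseteq J$ with basis $d = d_1, d_2 \upto d_m$. The matrix coefficients $c_{j,i} \in K[G]$ of this representation satisfy $\sigma \cdot d_i = \sum_j c_{j,i}(\sigma) d_j$, and writing each $d_j = h_j(a_1 \upto a_n)$ with $h_j \in K[y_1 \upto y_n] \cap E$, one obtains the \emph{algebraic} identity $d_i = \sum_j c_{j,i}\, h_j(g_1 \upto g_n)$ in $K[G] \otimes R$. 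This exhibits $d$ as an honest element of $R \cap \widehat{E} = R \cap \bigl(f_1(g_1 \upto g_n) \upto f_s(g_1 \upto g_n)\bigr)$, which is nilpotent by~\eqref{eqCondI}. Local finiteness is what converts the family of pointwise relations (one per $\sigma$) into a single relation with coefficients in $K[G]$; without it your argument does not close. Also, for $G$-stability of $\widehat{E}$ you should specify the $G$-action on $K[G]$ (right translation) and check that the $g_i$ are genuinely invariant, not merely that they ``transform correctly.''
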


\begin{proof}
  \begin{enumerate}
  \item[\eqref{tComputeDEA}] If $f \in D_{a_1 \upto a_n}$, then for
    every $\sigma \in G$ we have $f\bigl(g_1(\sigma) \upto
    g_n(\sigma)\bigr) = 0$, so $f \in \widehat{D}$. Conversely, if $f
    \in S[y_1 \upto y_n] \cap \widehat{D}$, then $f = \sum_{i=1}^n h_i
    (y_i - g_i)$ with $h_i \in K[G] \otimes S[y_1 \upto y_n]$, so for
    $\sigma \in G$ we obtain $f = \sum_{i=1}^n h_i(\sigma) (y_i -
    \sigma^{-1} \cdot a_i)$, and then $f(\sigma^{-1} \cdot a_1 \upto
    \sigma^{-1} \cdot a_n) = 0$. This implies $f \in D_{a_1 \upto
      a_n}$.
  \item[\eqref{tComputeDEB}] Let $G$ act on $K[G]$ by defining $\tau
    \cdot f$ as the function $G \to K$, $\sigma \mapsto f(\sigma
    \tau)$, where $\tau \in G$ and $f \in K[G]$. It is easy to check
    that the $g_i$ are $G$-invariant under the action on $K[G] \otimes
    R$. The $f_i$ are also invariant. It follows that $\widehat{E}$ is
    $G$-stable, hence the same is true for $E$. It follows
    from~\eqref{tComputeDEA} that $D_{a_1 \upto a_n} \subseteq
    E$. Consider the ideal
    \[
    I := \bigl\{h(a_1 \upto a_n) \mid h \in K[y_1 \upto y_n] \cap
    E\bigr\} \subseteq R.
    \]
    from \dref{dDerksen}\eqref{dDerksenC} and take a nonzero element
    $d \in \bigcap_{\sigma \in G} \sigma \cdot I =:J$. We need to show
    that~$d$ is nilpotent. Since the $G$-action on $R$ is locally
    finite (see \mycite[Chapter~1, \S~1, Lemma]{MFK}) and $J$ is
    $G$-stable, it contains a $G$-module $V$ with $d \in V$. Choose a
    basis $d = d_1,d_2 \upto d_m$ of $V$. There exists a matrix
    $(c_{j,i}) \in K[G]^{m \times m}$ such that $\sigma \cdot d_i =
    \sum_{j=1}^m c_{j,i}(\sigma) d_j$ for $1 \le i \le m$ and $\sigma
    \in G$. Since $d_j \in J \subseteq I$ we can write $d_j = h_j(a_1
    \upto a_n)$ with $h_j \in K[y_1 \upto y_n] \cap E$. We obtain
    \[
    d_i = \sum_{j=1}^m c_{j,i}(\sigma) \bigr(\sigma^{-1} \cdot
    d_j\bigr) = \sum_{j=1}^m c_{j,i}(\sigma) h_j\bigl(g_1(\sigma)
    \upto g_n(\sigma)\bigr),
    \]
    which implies
    \[
    d_i = \sum_{j=1}^m c_{j,i} h_j(g_1 \upto g_n) \in K[G] \otimes R.
    \]
    From $h_j \in E$ we obtain $h_j(g_1 \upto g_n) \in \widehat{E}$,
    so $d_i \in R \cap \widehat{E}$. In particular, this holds for $d
    = d_1$. So if we can show that $R \cap \widehat{E}$ is nilpotent,
    we are done. We have
    \[
    \widehat{E} = \widehat{D} + \bigl(f_1(g_1 \upto g_n) \upto f_s(g_1
    \upto g_n)\bigr),
    \]
    so
    \[
    \bigl(K[G] \otimes S\bigr) \cap \widehat{E} = \bigl(f_1(g_1 \upto
    g_n) \upto f_s(g_1 \upto g_n)\bigr).
    \]
    This implies $R \cap \widehat{E} = R \cap \bigl(f_1(g_1 \upto g_n)
    \upto f_s(g_1 \upto g_n)\bigr)$, which is nilpotent by
    hypothesis. \qed
  \end{enumerate}
  \renewcommand{\qed}{}
\end{proof}

\begin{rem} \label{rComputeDE}%
  We wish to give a geometric interpretation to the
  hypothesis~\eqref{eqCondI}. Consider the ideal
  \[
  \widehat{E}_R := \bigl(y_1 - g_1 \upto y_n - g_n,f_1 \upto f_s\bigr)
  \subseteq K[G] \otimes R[y_1 \upto y_n].
  \]
  Assume that $R = K[X]$ and let $Z \subseteq X$ be the closed subset
  given by $f_i(a_1 \upto a_n) = 0$ for $i = 1 \upto s$. Then
  $\widehat{E}_R$ defines the closed subset
  \[
  \widehat{\mathcal{E}} := \bigl\{(\sigma^{-1},\sigma \cdot z,\phi(z))
  \mid \sigma \in G, \ z \in Z\bigr\} \subseteq G \times X \times K^n.
  \]
  So $R \cap \widehat{E}_R$ defines the closure of the projection
  $\pi_X(\widehat{\mathcal{E}}) = G \cdot Z$ of
  $\widehat{\mathcal{E}}$ to $X$. It follows that $R \cap
  \widehat{E}_R = \{0\}$ if and only if $G \cdot Z$ is dense in
  $X$. We have
  \[
  R \cap \widehat{E}_R = R \cap \bigl(f_1(g_1 \upto g_n) \upto f_s(g_1
  \upto g_n)\bigr)_{K[G] \otimes R},
  \]
  where the index at the bracket signifies the ring in which the ideal
  is formed. If $X$ is irreducible and $L = K(X)$, then
  \[
  R \cap \bigl(f_1(g_1 \upto g_n) \upto f_s(g_1 \upto g_n)\bigr)_{K[G]
    \otimes R} = \{0\}
  \]
  is equivalent to~\eqref{eqCondI}. So if $S = K[X]$ or $S = K(X)$
  (if $X$ is irreducible), then~\eqref{eqCondI} is equivalent to the
  condition that $G \cdot Z$ is dense in $X$.
\end{rem}

The following result tells us how far we can get with tamely extended
Derksen ideals. The upshot is that they can be chosen in such a way
that the number of indeterminates involved in the computation of the
elimination ideal $E$ of $\widehat{E}$ (see \tref{tComputeDE}) is
effectively reduced by~$d$, the maximal dimension of a $G$-orbit. The
theorem generalizes Theorem~3.3 from \mycite{HK:06}, which only
applies to affine $n$-space. It also uses a selection of the $y_i$
rather than linear combinations of them.

\begin{theorem} \label{tCrossSection}%
  Let $G$ be a linear algebraic group, $X$ an irreducible $G$-variety,
  and $a_1 \upto a_n$ generators of $K[X]$. With $L := K(X)$, set
  \[
  \widehat{D} := \bigl(y_1 - g_1 \upto y_n - g_n\bigr) \subseteq K[G]
  \otimes L[y_1 \upto y_n]
  \]
  as in \tref{tComputeDE}. Let~$d$ be the maximal dimension of a
  $G$-orbit in $X$. Then there exist indices $1 \le i_1 < \cdots < i_d
  \le n$ such that the classes of the $y_{i_j}$ in $\bigl(K[G] \otimes
  L[y_1 \upto y_n]\bigr)/\widehat{D}$ are algebraically independent
  over $L$. Moreover, there exists an open, dense subset $U \subseteq
  K^d$ such that for $(\beta_1 \upto \beta_d) \in U$ the polynomials
  \[
  f_j := y_{i_j} - \beta_j \in K[y_1 \upto y_n] \quad (j = 1 \upto d)
  \]
  satisfy the condition~\eqref{eqCondI} from
  \tref{tComputeDE}. Moreover, with
  \[
  \widehat{E} := \widehat{D} + (f_1 \upto f_d) \quad \text{and} \quad
  E := L[y_1 \upto y_n] \cap \widehat{E},
  \]
  we have
  \[
  \dim\left(\bigl(K[G] \otimes L[y_1 \upto
    y_n]\bigr)/\widehat{E}\right) = \dim(G) - d
  \]
  and $\dim\left(L[y_1 \upto y_n]/E\right) = 0$.
\end{theorem}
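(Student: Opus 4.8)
The plan is to work throughout with the ring isomorphism $\bigl(K[G] \otimes L[y_1 \upto y_n]\bigr)/\widehat{D} \cong K[G] \otimes_K L$ that sends the class of $y_i$ to $g_i$; write $B$ for this ring and let $C \subseteq B$ be the $L$-subalgebra generated by the $g_i$. By \tref{tComputeDE}\eqref{tComputeDEA} one has $C \cong L[y_1 \upto y_n]/D_{a_1 \upto a_n}$, and by \lref{lDimL} the ring $C$ is equidimensional of dimension $d$. To settle the choice of indices, pick a minimal prime $\mathfrak{q}$ of $C$ (any one gives $\dim C/\mathfrak{q} = d$ by equidimensionality): then $\Quot(C/\mathfrak{q})$ has transcendence degree $d$ over $L$ and is generated by the images of $g_1 \upto g_n$, so $d$ of those images form a transcendence basis. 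Relabelling, $g_{i_1} \upto g_{i_d}$ with $i_1 < \cdots < i_d$ are then algebraically independent over $L$ already in $C$, hence so are the classes of $y_{i_1} \upto y_{i_d}$ in $B$; this is the first assertion. I also record the consequence that the $a_{i_j}$ are themselves algebraically independent over $K$, since a relation $p(a_{i_1} \upto a_{i_d}) = 0$ in $K[X]$ would, on acting by $\sigma^{-1}$, give $p(g_{i_1} \upto g_{i_d}) = 0$ in $K[G] \otimes_K R$.

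Next comes the key step, the existence of $U$. By \rref{rComputeDE}, applied with $S = L = K(X)$, the condition~\eqref{eqCondI} for the polynomials $f_j = y_{i_j} - \beta_j$ (whose evaluations at the $a_i$ are $a_{i_j} - \beta_j$) is equivalent to: $G \cdot Z_\beta$ is dense in $X$, where $Z_\beta \subseteq X$ is the closed subset defined by $a_{i_j} = \beta_j$ for $j = 1 \upto d$. I would prove this holds on a dense open $U \subseteq K^d$ by a geometric argument. Put $\alpha := (a_{i_1} \upto a_{i_d}) \colon X \to K^d$ and consider the morphism $\Lambda \colon G \times X \to X \times K^d$, $(\sigma,x) \mapsto (x, \alpha(\sigma \cdot x))$. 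Its restriction to the fibre of the first projection over the generic point $\eta$ of $X$ is precisely the morphism corresponding to $L[t_1 \upto t_d] \hookrightarrow K[G] \otimes_K L$, $t_j \mapsto g_{i_j}$, which is dominant by the first assertion; hence the generic fibre of $\overline{\Lambda(G \times X)} \to X$ has dimension $d$, so a component of $\overline{\Lambda(G \times X)}$ dominating $X$ already has dimension $\dim X + d = \dim(X \times K^d)$, and since $X \times K^d$ is irreducible ($K$ being algebraically closed) we get $\overline{\Lambda(G \times X)} = X \times K^d$. By Chevalley's theorem $\Lambda(G \times X)$ contains a dense open $\mathcal{O} \subseteq X \times K^d$. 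Since $G \cdot Z_\beta = \{x \in X \mid (x,\beta) \in \Lambda(G \times X)\}$, it is enough to take $\beta$ so that $\mathcal{O} \cap (X \times \{\beta\})$ is dense in $X \times \{\beta\}$; the $\beta$ for which this fails — those over which the closed complement of $\mathcal{O}$ has a fibre of full dimension $\dim X$ — form a proper closed subset of $K^d$. By \tref{tComputeDE}\eqref{tComputeDEB}, $E$ is a tamely extended Derksen ideal for every such $\beta$.

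For the dimension formulas I argue component-wise. The ring $\bigl(K[G] \otimes L[y_1 \upto y_n]\bigr)/\widehat{E}$ equals $B/(g_{i_1} - \beta_1 \upto g_{i_d} - \beta_d)$, i.e.\ the fibre over the $L$-rational point $\beta$ of the morphism $G_L \to \mathbb{A}^d_L$ dual to $t_j \mapsto g_{i_j}$, which is dominant by the first assertion; here $G_L$ has coordinate ring $B = K[G] \otimes_K L$, which is reduced and equidimensional of dimension $\dim(G)$ because $K$ is algebraically closed. By the theorem on dimensions of fibres there is a dense open of $\mathbb{A}^d_L$ over which the fibre is empty on the components of $G_L$ not dominating $\mathbb{A}^d_L$ and equidimensional of dimension $\dim(G) - d$ on those that do. For $\dim(L[y_1 \upto y_n]/E)$ I use that $E = L[y_1 \upto y_n] \cap \widehat{E}$ contains both $D_{a_1 \upto a_n}$ and all the $f_j$, so $L[y_1 \upto y_n]/E$ is a quotient of $C/(g_{i_1} - \beta_1 \upto g_{i_d} - \beta_d)C$; cutting the $d$-dimensional equidimensional ring $C$ by these $d$ elements, on a minimal prime of $C$ along which the $g_{i_j}$ remain algebraically independent they form a transcendence basis, so each $g_i$ becomes integral over $L$ after a sufficiently generic specialisation of the $\beta_j$, while on a minimal prime along which the $g_{i_j}$ satisfy a relation the fibre over a generic $\beta$ is empty; hence this quotient, and with it $L[y_1 \upto y_n]/E$, is zero-dimensional (it is nonzero since $E$ is a proper ideal). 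Each of these requirements removes only a proper closed subset of $K^d$, and the final $U$ is the intersection of all of them.

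\textbf{Main obstacle.} The substance of the proof is the existence step: turning algebraic independence of the $g_{i_j}$ over $L$ into a dense open set of parameter values $\beta$ for which the slice $Z_\beta$ has dense $G$-saturation — this is exactly where the morphism $\Lambda$ and Chevalley's theorem enter, and where the geometric interpretation of \rref{rComputeDE} is indispensable. A persistent technical point, rather than a deep one, is that the natural ``generic'' loci in all three parts are defined over the function field $L$, not over $K$: one has to check that a dense open subscheme of affine $d$-space over $L$ still meets $K^d$ in a dense open set, which holds because $K$ is infinite and a nonzero polynomial over $L$ stays nonzero on $K^d$ after reduction modulo a $K$-basis of its coefficients. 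Disconnectedness of $G$ — hence reducibility of $G_L$ and of $C$ — is what forces the component-by-component accounting above, but introduces no genuine difficulty.
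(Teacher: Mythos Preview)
Your proof is correct, but it takes a somewhat different route from the paper's, and it is worth seeing the contrast.

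The paper handles all three conclusions---condition~\eqref{eqCondI}, the dimension of $\bigl(K[G]\otimes L[y_1\upto y_n]\bigr)/\widehat{E}$, and the dimension of $L[y_1\upto y_n]/E$---in a single stroke. Having chosen the indices (exactly as you do, via \lref{lDimL}), it considers the chain of injections
\[
A := L[y_{i_1}\upto y_{i_d}] \hookrightarrow L[y_1\upto y_n]/D \hookrightarrow \bigl(K[G]\otimes L[y_1\upto y_n]\bigr)/\widehat{D},
\]
both target rings being equidimensional (of dimensions~$d$ and~$\dim G$). One application of the fibre-dimension theorem produces a nonzero $a\in A$ such that for every maximal ideal $\mathfrak{m}\subset A$ avoiding~$a$, both fibres are nonempty of the expected dimensions~$0$ and~$\dim G - d$. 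Specialising $\mathfrak{m}=(y_{i_1}-\beta_1\upto y_{i_d}-\beta_d)$ with $\beta\in K^d$ and $a(\beta)\ne 0$ then gives everything at once: the nonemptiness of the large fibre makes $\widehat{E}$ proper, and since $(K[G]\otimes L)\cap\widehat{E}=(g_{i_1}-\beta_1\upto g_{i_d}-\beta_d)$, condition~\eqref{eqCondI} drops out for free; the dimension formulas are the fibre dimensions themselves, with $\dim(L[y_1\upto y_n]/E)=0$ following because $E\supseteq D+(f_1\upto f_d)$.

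You instead separate the three tasks. For~\eqref{eqCondI} you go through the geometric interpretation of \rref{rComputeDE}, build the morphism $\Lambda\colon G\times X\to X\times K^d$, and use Chevalley's theorem to produce~$U$; for the two dimension statements you argue fibre dimensions individually over $\mathbb{A}^d_L$. This is more work, and it forces you to confront the $L$-versus-$K$ issue (open loci in $\mathbb{A}^d_L$ meeting $K^d$) three times rather than once---the paper faces it only in the single condition $a(\beta)\ne 0$. On the other hand, your argument makes the geometric content (why a generic affine slice $Z_\beta$ has dense $G$-saturation) completely explicit, whereas in the paper this is hidden inside the algebra.
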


\begin{proof}
  From the isomorphism
  \[
  \bigl(K[G] \otimes L[y_1 \upto y_n]\bigr)/\widehat{D} \cong K[G]
  \otimes L
  \]
  we conclude that $\bigl(K[G] \otimes L[y_1 \upto
  y_n]\bigr)/\widehat{D}$ is equidimensional of the same dimension as
  $G$. Moreover, from \lref{lDimL} we know that $L[y_1 \upto y_n]/D$
  is equidimensional of dimension~$d$, where $D := L[y_1 \upto y_n]
  \cap \widehat{D}$. (By \tref{tComputeDE}\eqref{tComputeDEA}, $D$
  is the Derksen ideal with respect to the $a_i$ formed in $L[y_1
  \upto y_n]$.) This implies that there exist $1 \le i_1 < \cdots <
  i_d \le n$ such that the $y_{i_j} + D \in L[y_1 \upto y_n]/D$ are
  algebraically independent (see, for example, \mycite[Theorem~5.9 and
  Proposition~5.10]{Kemper.Comalg}). With $A := L[y_{i_1} \upto
  y_{i_d}]$, we get injective maps
  \[
  A \overset{\phi}{\longrightarrow} L[y_1 \upto y_n]/D
  \overset{\psi}{\longrightarrow} \bigl(K[G] \otimes L[y_1 \upto
  y_n]\bigr)/\widehat{D}
  \]
  By a standard result on the dimension of fibers (see
  \mycite[Theorem~10.5]{Kemper.Comalg}), there exists a nonzero $a \in
  A$ such that for every maximal ideal $\mathfrak{m} \subset A$ with
  $a \notin \mathfrak{m}$ the fibers of~$\mathfrak{m}$ in
  $\Spec\bigl(L[y_1 \upto y_n]/D\bigr)$ and in $\Spec\bigl(\bigl(K[G]
  \otimes L[y_1 \upto y_n]\bigr)/\widehat{D}\bigr)$ are nonempty and
  have dimensions $\dim\bigl(L[y_1 \upto y_n]/D\bigr) - \dim(A) = 0$
  and $\dim\bigl(\bigl(K[G] \otimes L[y_1 \upto
  y_n]\bigr)/\widehat{D}\bigr) - \dim(A) = \dim(G) - d$,
  respectively. (Here the equidimensionality is used.) This means that
  for the ideals $I := \bigl(\phi(\mathfrak{m})\bigr) \subseteq L[y_1
  \upto y_n]$ and $J := \bigl(\psi(\phi(\mathfrak{m}))\bigr) \subseteq
  K[G] \otimes L[y_1 \upto y_n]$ one has
  \[
  \dim\left(K[y_1 \upto y_n]/I\right) = 0, \ \dim\left(\bigl(K[G]
    \otimes L[y_1 \upto y_n]\bigr)/J\right) = \dim(G) - d.
  \]
  In particular, this holds for $\mathfrak{m} = (y_{i_1} - \beta_1
  \upto y_{i_d} - \beta_d) \subset A$ with $\beta_1 \upto \beta_d \in
  K$ such that $a(\beta_1 \upto \beta_d) \ne 0$. With $f_j := y_{i_j}
  - \beta_j$, we obtain that $D + (f_1 \upto f_d)$ and $\widehat{E}$
  (as defined in the statement of the theorem) are proper ideals, with
  dimensions as above. Since
  \[
  \big(K[G] \otimes L\bigr) \cap \widehat{E} = \bigl(g_{i_1} - \beta_1
  \upto g_{i_d} - \beta_d\bigr),
  \]
  it follows that the~$f_j$ satisfy the condition~\eqref{eqCondI}
  from \tref{tComputeDE}. Since $D + (f_1 \upto f_d) \subseteq E$, we
  also get $\dim\bigl(L[y_1 \upto y_n]/E\bigr) = 0$.
\end{proof}

\begin{rem} \label{rInvariantization}%
  The last equality in \tref{tCrossSection} is significant since it
  leads to a variant of the invariantization map that does not depend
  on the choice of the monomial ordering used for the computation of a
  Gr\"obner basis of $E$. Indeed, assume that $E \subseteq L[y_1 \upto
  y_n]$ is an extended Derksen ideal such that $N := L[y_1 \upto
  y_n]/E$ has Krull dimension zero. Then its dimension $e :=
  \dim_L(N)$ as an $L$-vector space is finite. We assume that~$e$ is
  not a multiple of $\ch(K)$. For $b = f(a_1 \upto a_n) \in K[a_1
  \upto a_n]$ with $f \in K[y_1 \upto y_n]$, define $\phi_E(b)$ as the
  trace of the endomorphism of $N$ given by multiplication by $e^{-1}
  f$. Then \tref{tInvariantization} holds with~$\phi_\mathcal{G}$
  replaced by~$\phi_E$. In fact, the proof of the theorem carries over
  to this case.

  Although~$\phi_E$ is independent of the choice of a monomial
  ordering, it does depend on the choice of $E$. This can be seen by
  reconsidering
  \exref{exInvariantization}\eqref{exInvariantization1} and using
  $E' = D_{x_1,x_2} + (y_2 - 1)$ as an alternative extended Derksen
  ideal. We have $\phi_E = \phi_\mathcal{G}$ if and only if $e = 1$,
  and in this case $\phi_E$ is a homomorphism of $R^G$-algebras.
\end{rem}

We are now ready to let our results (in particular,
Theorems~\ref{tInvariantField}, \ref{tLocalize}, \ref{tComputeDE},
and~\ref{tCrossSection}) flow into an algorithm for the computation of
invariant fields and localizations of invariant rings. The result of
the algorithm can be fed into Semi-algorithm~\ref{aSemi}. We assume
the standard situation of invariant theory with $K$ an algebraically
closed field.

\begin{alg}[Computation of a localization of an invariant
  ring] \label{aLocalize} \mbox{}%
  \begin{description}
  \item[\bf Input:] A linear algebraic group $G$ given as a subset of
    $K^m$ by a radical ideal $I_G \subseteq K[z_1 \upto z_m]$, and an
    irreducible $G$-variety $X$ given by a prime ideal $I_X \subseteq
    K[x_1 \upto x_n]$, with the action given by
    \[
    \sigma \cdot v = \bigl(g_1(v,\sigma) \upto g_n(v,\sigma)\bigr)
    \]
    for $v \in X$ and $\sigma \in G$, where $g_i \in K[x_1 \upto
    x_n,z_1 \upto z_m]$.
  \item[\bf Output:] Generators of the invariant field $K(X)^G$ and
    invariants $a,b_1 \upto b_k \in K[X]^G$ such that
    \[
    K[X]^G_a = K[a^{-1},a,b_1 \upto b_k].
    \]
    The latter is only possible if $K(X)^G = \Quot\left(K[X]^G\right)$
    (see \tref{tItalian} for conditions which guarantee this), or if
    step~\ref{aLocalize4} is used in such a way as to ensure the
    existence of $a \in K[X]^G$ as in \tref{tLocalize}.
  \end{description}
  \begin{enumerate}
    \renewcommand{\theenumi}{\arabic{enumi}}
  \item \label{aLocalize1} The first step is optional but
    recommended. It implements \tref{tCrossSection}. Set~$d$ equal to
    (or less than) the maximal dimension of a $G$-orbit in $X$. Choose
    an injective map $\map{\eta}{\NN_0}{K}$.

    For all $s = 0,1,2, \ldots$, for all $(a_1 \upto a_d) \in \NN_0^d$
    with $\sum_{i=1}^d a_i = s$, and for all $1 \le i_1 < i_2 < \cdots
    < i_d \le n$, let $J \subseteq K[x_1 \upto x_n,z_1 \upto z_m]$ be
    the ideal generated by $I_X$, $I_G$ and the $g_{i_j} - \eta(a_j)$
    ($j = 1 \upto d$). Check whether
    \[
    K[x_1 \upto x_n] \cap J \subseteq I_X.
    \]
    When this condition is satisfied, remember the $i_1 \upto i_d$,
    set $\beta_j := \eta(a_j)$, and proceed to the next step.
  \item With $y_1 \upto y_n$ additional indeterminates, form the ideal
    $\widehat{E} \subseteq K[x_1 \upto x_n,y_1 \upto y_n,\linebreak z_1 \upto
    z_m]$ generated by $I_X$, $I_G$, $y_i - g_i$ ($i = 1 \upto n$),
    and (if step~\ref{aLocalize1} was not omitted) $y_{i_j} -
    \beta_j$ ($j = 1 \upto d$).

    Choose a monomial ordering on $K[y_1 \upto y_n,z_1 \upto z_m]$
    such that every $z_i$ is bigger than every power of a $y_j$, then
    choose an arbitrary monomial ordering on $K[x_1 \upto x_n]$, and
    let $>$ be the block ordering on $K[x_1 \upto x_n,y_1 \upto
    y_n,z_1 \upto z_m]$ formed from these two orderings, with
    precedence on the $y$- and $z$-variables. For example, a
    lexicographic ordering with $z_i > y_j > x_l$ for all $i,j,l$ is
    possible.
  \item \label{aLocalize3} Compute a Gr\"obner basis
    $\widehat{\mathcal{G}}$ of $\widehat{E}$ with respect to $>$. Then
    set
    \[
    \mathcal{G} := K[x_1 \upto x_n,y_1 \upto y_n] \cap
    \widehat{\mathcal{G}},
    \]
    $\mathcal{G}_x := K[x_1 \upto x_n] \cap \mathcal{G}$, and
    $\mathcal{G}_y := \mathcal{G} \setminus \mathcal{G}_x$. Viewed as
    polynomials in $K(X)[y_1 \upto y_n]$, the elements of
    $\mathcal{G}_y$ form a Gr\"obner basis of the tamely extended
    Derksen ideal given by $\widehat{E}$. Moreover, $\mathcal{G}_x$ is
    a Gr\"obner basis of $I_X$.
  \item \label{aLocalize4} This step is optional and should only be
    tried if $K(X)^G \ne \Quot\left(K[X]^G\right)$. Choose an ideal
    $I_Z \subseteq K[y_1 \upto y_n]$ such that the ideal $J \subseteq
    K[x_1 \upto x_n,y_1 \upto y_n]$ generated by $\mathcal{G}$ and
    $I_Z$ satisfies
    \[
    K[x_1 \upto x_n] \cap J \subseteq I_X
    \]
    The goal of this step is to obtain a nontamely extended Derksen
    ideal such that $a \in K[X]^G$ as in \tref{tLocalize}
    exists. After choosing $I_Z$, replace $\mathcal{G}$ by a Gr\"obner
    basis of $J$ and set $\mathcal{G}_x$ and $\mathcal{G}_y$ as in
    step~\ref{aLocalize3}.
  \item This step turns $\mathcal{G}_y$, viewed as a subset of
    $K(X)[y_1 \upto y_n]$, into a reduced Gr\"obner basis. For all $f
    \in \mathcal{G}_y$ perform step~\ref{aLocalize6}.
  \item \label{aLocalize6} As long as there exists $g \in
    \mathcal{G}_y \setminus \{f\}$ such that $\LM(g)$ divides a term
    $c \cdot m$ of~$f$ (where~$f$ and~$g$ are viewed as polynomials in
    the~$y_i$ with coefficients in $K[x_1 \upto x_n]$), take the
    maximal such monomial~$m$ and replace~$f$ by
    \[
    \NF_{\mathcal{G}_x}\bigl(\LC(g) f - c g\bigr).
    \]
    If this is zero, delete~$f$ from $\mathcal{G}_y$.
  \item \label{aLocalize7} Let $\frac{f_j}{h_j}$ ($j = 1 \upto k$,
    $f_j,h_j \in K[x_1 \upto x_n]$) be the coefficients appearing in
    the polynomials $\LC(f)^{-1} f$ with $f \in \mathcal{G}_y$. If
    step~\ref{aLocalize4} was omitted (the standard case), then
    \[
    K(X)^G = K\left(\frac{f_1 + I_X}{h_1 + I_X} \upto \frac{f_k +
        I_X}{h_k + I_X}\right).
    \]
  \item This step searches invariants $a,b_1 \upto b_k \in K[X]^G$
    such that $\frac{f_j + I_X}{h_j + I_X} = \frac{b_j}{a}$. This only
    terminates if such invariants exist, which is guaranteed if
    $K(X)^G = \Quot\left(K[X]^G\right)$. For $r = 0,1,2, \ldots$
    perform steps~\ref{aLocalize9} and~\ref{aLocalize10}.
  \item \label{aLocalize9} Let $m_1 \upto m_l \in K[x_1 \upto x_n]$
    be all monomials of degree $\le r$ that are in normal form with
    respect to $\mathcal{G}_x$. Consider the system of linear
    equations for $\alpha_i$ and $\beta_{i,j} \in K$ ($i = 1 \upto l$,
    $j = 1 \upto k$) given by
    \[
    \sum_{i=1}^l \alpha_i \NF_{\mathcal{G}_x}(f_j m_i) = \sum_{i=1}^l
    \beta_{i,j} \NF_{\mathcal{G}_x}(h_j m_i) \quad (j = 1 \upto k)
    \]
    and
    \[
    \sum_{i=1}^l \alpha_i
    \NF_{\mathcal{G}_G}\left(\NF_{\mathcal{G}_x}\bigl(m_i(g_1 \upto
      g_n) - m_i\bigr)\right) = 0,
    \]
    where $\mathcal{G}_G \subseteq K[z_1 \upto z_m]$ is a Gr\"obner
    basis of $I_G$.
  \item \label{aLocalize10} If the system has a nonzero solution,
    then with $a := \sum_{i=1}^l \alpha_i m_i + I_X$ and $b_j :=
    \sum_{i=1}^l \beta_{i,j} m_i + I_X \in K[X]$ we have
    \[
    K[X]^G_a = K[a^{-1},a,b_1 \upto b_k].
    \]
  \end{enumerate}
\end{alg}

Instead of directly computing a Gr\"obner basis over the function
field $L = K(X)$, the algorithm computes in an appropriate polynomial
ring over $K$. This has two advantages: First, computer algebra
systems do not support Gr\"obner basis computations over fields as
complicated as function fields of irreducible varieties. And second,
even if $X = K^n$ (and so $K(x_1 \upto x_n)$ is supported as a ground
field for Gr\"obner basis computation), experience shows that it is
better to perform the computations in a polynomial ring. By
remembering the polynomials $\LC(f)^{-1} f$ with $f \in \mathcal{G}_y$
(which, viewed as polynomials in $K(X)[y_1 \upto y_n]$, form a reduced
Gr\"obner basis of the extended Derksen ideal), one can also get the
invariantization map from \tref{tInvariantization} out of
\aref{aLocalize}. \\

The special case of the additive group $\Ga$ is particularly easy to
deal with. The following algorithm computes a localization of the
invariant ring of $\Ga$ under mild hypotheses. The first algorithm for
computing invariants of the additive groups was given by
\mycite{essen}. His algorithm is essentially \aref{aGa}. See in
\mycite{Freudenburg:2006} for a much more comprehensive treatment.

\begin{alg}[A localization of the invariant ring of a
  $\Ga$-action] \label{aGa} \mbox{}%
  \begin{description}
  \item[\bf Input:] An irreducible affine variety $X$ given by a prime
    ideal $I_X \subseteq K[x_1 \upto x_n]$, with an action of the
    additive group $G = \Ga$ given by
    \[
    t \cdot v = \bigl(g_1(v,t) \upto g_n(v,t)\bigr)
    \]
    for $v \in X$ and $t \in \Ga$, where $g_i \in K[x_1 \upto
    x_n,z]$. With the $g_i$ chosen in such a way that no coefficient
    of a $g_i$ (as a polynomial in~$z$) lies in $I_X$, at least one
    $g_i$ is assumed to have a degree not divisible by $\ch(K)$. If
    $\ch(K) = 0$, this hypothesis just means that the $\Ga$-action is
    nontrivial.
  \item[\bf Output:] Invariants $a,b_1 \upto b_n \in K[X]^\Ga$ such
    that
    \[
    K[X]^\Ga_a = K[a^{-1},a,b_1 \upto b_n],
    \]
    and an invariantization map
    \[
    \map{\phi}{K[X]_a}{K[X]^\Ga_a},
    \]
    which is a homomorphism of $K[X]^\Ga_a$-algebras.
  \end{description}
  \begin{enumerate}
    \renewcommand{\theenumi}{\arabic{enumi}}
  \item For $i = 1 \upto n$, let $d_i$ be the degree of $g_i$ as a
    polynomial in~$z$. Choose an~$i$ such that~$d_i$ is not divisible
    by $\ch(K)$. Write
    \[
    g_i = \sum_{j=0}^{d_i} g_{i,j} \cdot z^{d_i-j}
    \]
    with $g_{i,j} \in K[x_1 \upto x_n]$.
  \item For $j = 1 \upto n$, let $h_j \in K(x_1 \upto x_n)$ be the
    result of substituting
    \[
    z = \frac{- g_{i,1}}{d_i \cdot g_{i,0}}
    \]
    in $g_j$.
  \item Set $a := g_{i,0} + I_X$ and $b_j := g_{i,0}^{d_j} h_j +
    I_X$. Define $\map{\phi}{K[X]_a}{K[X]^\Ga_a}$ as a homomorphism of
    $K$-algebras by
    \[
    \phi(a^{-1}) = a^{-1} \quad \text{and} \quad \phi(x_j + I_X) =
    a^{-d_j} b_j.
    \]
  \end{enumerate}
\end{alg}

Of course, \aref{aLocalize} is applicable to all $\Ga$-actions
without the assumption made in \aref{aGa}. But without this assumption
the computation will be harder and the result will be less easy to
describe. Another algorithm for computing $\Ga$-invariants in all
characteristics was given by
\mycite[Section~3.1.2]{Derksen.Kemper06}. This reference also contains
an example of a nontrivial $\Ga$-action for which the assumption of
\aref{aGa} is not satisfied.

\begin{proof}[Proof of correctness of \aref{aGa}]
  If $f \in K[x_1 \upto x_n]$ we write $\overline{f} = f + I_X$ for
  the image of~$f$ in $K[X]$. For $s,t \in \Ga$ we have
  \[
  (-s - t) \cdot \overline{x}_i = (-t) \cdot \bigl(-s \cdot
  \overline{x}_i\bigr) = (-t) \cdot \Bigl(\sum_{j=0}^{d_i}
  \overline{g}_{i,j} s^{d_i - j}\Bigr) = \sum_{j=0}^{d_i} \bigl(-t
  \cdot \overline{g}_{i,j}\bigr) s^{d_i - j},
  \]
  and on the other hand
  \[
  (-s - t) \cdot \overline{x}_i = \sum_{j=0}^{d_i} \overline{g}_{i,j}
  (s + t)^{d_i - j} = \sum_{k=0}^{d_i} \sum_{j=0}^{d_i-k}
  \overline{g}_{i,j} \binom{d_i-j}{k} t^{d_i-j-k} s^k.
  \]
  Since these formulas hold for all~$s$, comparison of the
  coefficients of $s^{d_i}$ and $s^{d_i-1}$ yields
  \begin{equation} \label{eqLinear}%
    (-t) \cdot \overline{g}_{i,0} =
    \overline{g}_{i,0} \quad \text{and} \quad (-t) \cdot
    \overline{g}_{i,1} = \overline{g}_{i,1} + d_i \overline{g}_{i,0}
    t.
  \end{equation}
  So $a = \overline{g}_{i.0}$ is a (nonzero) invariant. In the
  function field $L := K(X)$ we consider the elements $a_0 :=
  \overline{g}_{i,1}$ and $a_i := \overline{x}_i$ ($i = 1 \upto
  n$). If we set $g_0 := g_{i,1} + d_i g_{i,0} z \in K[x_1 \upto
  x_n,z]$, then for $t \in \Ga$ and $i \in \{0 \upto n\}$ we have
  $(-t) \cdot a_i = \overline{g}_i(t)$. Clearly $L \cap
  (\overline{g}_0) = \{0\}$, so \tref{tComputeDE}\eqref{tComputeDEB}
  tells us that with
  \[
  \widehat{E} = \Bigl(y_0,\overline{g}_0,y_1 - \overline{g}_1 \upto
  y_n - \overline{g}_n\Bigr) \subseteq K[\Ga] \otimes L[y_0 \upto
  y_n],
  \]
  the intersection $E := L[y_1 \upto y_n] \cap \widehat{E}$ is a
  tamely extended Derksen ideal. Using the map
  \[
  \mapl{\psi}{L[z]}{L}{f(z)}{f\Bigl(\frac{-\overline{g}_{i.1}}{d_i
      \overline{g}_{i,0}}\Bigr)},
  \]
  we can write $\widehat{E}$ as
  \[
  \widehat{E} = \Bigl(y_0,z - \psi(z),y_1 - \psi(\overline{g}_1) \upto
  y_n - \psi(\overline{g}_n)\Bigr).
  \]
  Now we see that the given generators form a reduced Gr\"obner basis,
  so $E$ has the reduced Gr\"obner basis
  \[
  \mathcal{G} = \bigl\{y_0,y_1 - \psi(\overline{g}_1) \upto y_n -
  \psi(\overline{g}_n)\bigr\}.
  \]
  Using the notation of the algorithm, we have $\psi(\overline{g}_j) =
  a^{-d_j} b_j$. So \tref{tLocalize} yields
  \[
  K[X]^\Ga_a = K[a^{-1},a,a^{-d_1} b_1 \upto a^{-d_n} b_n]
  \]
  and hence also $K[X]^\Ga_a = K[a^{-1},a,b_1 \upto b_n]$. Moreover,
  for $f \in K[x_1 \upto x_n]$ we have $\NF_{\mathcal{G}}\bigl(f(y_1
  \upto y_n)\bigr) = f\bigl(a^{-d_1} b_1 \upto a^{-d_n} b_n\bigr)$, so
  the map~$\phi$ from the algorithm is indeed the invariantization map
  from \tref{tInvariantization}. The theorem implies that~$\phi$ is
  constant on $K[X]^\Ga_a$, and~$\phi$ is a ring homomorphism by
  construction.
\end{proof}

Let us look at an example. The (in some sense) smallest example known
to date of a nonfinitely generated invariant ring was given by
\mycite{Daigle.Freudenburg99}. So it will be interesting to run
\aref{aGa} on this example.

\begin{ex} \label{exDF}%
  Daigle and Freudenburg's example is an action of the additive group
  $\Ga$ on the polynomial ring $R = \CC[x_1 \upto x_5]$ (and on its
  field of fractions $L = \CC(x_1 \upto x_5)$), which is best defined
  in terms of the locally nilpotent derivation
  \[
  \delta = x_1^3 \frac{\partial}{\partial x_2} + x_2
  \frac{\partial}{\partial x_3} + x_3 \frac{\partial}{\partial x_4} +
  x_1^2 \frac{\partial}{\partial x_5},
  \]
  so
  \[
  \CC[x_1 \upto x_5]^\Ga = \ker(\delta).
  \]
  Converting the action to make it compatible with our setting yields
  an action given by $(-t) \cdot x_i = g_i(x_1 \upto x_5,t)$ for $t
  \in \Ga = \CC$ with polynomials
  \begin{align*}
    & g_1 = x_1, \quad g_2 = x_2 + z x_1^3, \quad g_3 = x_3 + z x_2 +
    \frac{z^2}{2} x_1^3, \\
    & g_4 = x_4 + z x_3 + \frac{z^2}{2} x_2 + \frac{z^3}{6} x_1^3,
    \quad \text{and} \quad g_5 = x_5 + z x_1^2.
  \end{align*}
  As a polynomial in~$z$, $g_2$ has degree~$1$, so we need to
  substitute $z = - x_2/x_1^3$ in the $g_i$. This yields rational
  invariants
  \[
  h_1 = x_1, \ h_2 = 0, \ h_3 = \frac{2 x_1^3 x_3 - x_2^2}{2 x_1^3}, \
  h_4 = \frac{3 x_1^6 x_4 - 3 x_1^3 x_2 x_3 + x_2^3}{3 x_1^6}, \
  \text{and} \ h_5 = \frac{x_1 x_5 - x_2}{x_1}.
  \]
%
  We obtain
  \[
  \CC[x_1 \upto x_5]^\Ga_{x_1} = \CC[x_1^{-1},x_1,f_1,f_2,f_3]
  \]
  with
  \[
  f_1 = 2 x_1^3 x_3 - x_2^2, \quad f_2 = 3 x_1^6 x_4 - 3 x_1^3 x_2 x_3
  + x_2^3, \quad \text{and} \quad f_3 = x_1 x_5 - x_2.
  \]
  So the localized invariant ring is isomorphic to a localized
  polynomial ring, the simplest possible structure. In particular, the
  invariant field is $\CC(x_1 \upto x_5)^\Ga = \CC(x_1,f_1,f_2,f_3)$,
  so it is a purely transcendental field extension of $\CC$. It seems
  amazing that despite all this simplicity, the invariant ring itself
  is not finitely generated.

  The invariantization map~$\phi$ is given by
  \[
  \phi(x_1) = x_1, \ \phi(x_2) = 0, \ \phi(x_3) = \frac{f_1}{2 x_1^3},
  \ \phi(x_4) = \frac{f_2}{3 x_1^6}, \ \text{and} \ \phi(x_5) =
  \frac{f_3}{x_1}.
  \]
  This is not $\Ga$-equivariant. (In fact, there cannot exist a
  $\Ga$-equivariant projection \linebreak $\CC[x_1^{-1},x_1 \upto x_5] \to
  \CC[x_1 \upto x_5]^\Ga_{x_1}$ since such a map would produce a
  complement of the invariant ring, which would then contain nonzero
  invariants by the unipotency of $\Ga$.)

  According to \tref{tRewrite} we also get the following procedure
  for rewriting an invariant in terms of $x_1$ and the $f_i$. For $f
  \in \CC[x_1^{-1},x_1 \upto x_5]$ form
  \[
  \widetilde{f} = f\left(t_0,0,\frac{t_1}{2 t_0^3},\frac{t_2}{3
      t_0^6},\frac{t_3}{t_0}\right) \in \CC[t_0^{-1},t_0 \upto t_3],
  \]
  where the $t_i$ are indeterminates. Then
  $\widetilde{f}(x_1,f_1,f_2,f_3) = \phi(f)$, so~$f$ is an invariant
  if and only if $f = \widetilde{f}(x_1,f_1,f_2,f_3)$. Hence
  $\widetilde{f}$ represents an invariant~$f$ in terms of the
  generating invariants.
\end{ex}

\aref{aGa} and its proof of correctness have an interesting geometric
interpretation. Consider the subsets
\[
U := \bigl\{x \in X \mid g_{i,0}(x) \ne 0\bigr\} \quad \text{and}
\quad S := \bigl\{x \in U \mid g_{i,1}(x) = 0\bigr\}
\]
of $X$. We see from~\eqref{eqLinear} that $U$ is $\Ga$-stable (so it
is a $\Ga$-variety) and that every $\Ga$-orbit in $U$ meets $S$ at
precisely one point. In fact, the map
\[
\Ga \times S \to U, \ (t,x) \mapsto t \cdot x
\]
is an isomorphism with
\[
U \to \Ga \times S, \ x \mapsto \left(\frac{g_{i,1}(x)}{d_i
    g_{i,0}(x)},\frac{-g_{i,1}(x)}{d_i g_{i,0}(x)} \cdot x\right)
\]
as inverse map. With $\Ga$ acting on itself by left translation and
trivially on $S$, the map $\Ga \times S \to U$ is actually an
isomorphism of $\Ga$-varieties. The second projection $\Ga \times S
\to S$ is a geometric quotient (in the sense of
\mycite[Definition~0.6]{MFK}), so we obtain a commutative diagram%
\DIAGV{80}%
{\Ga \times S} \n{\Ear{\sim}} \n{U} \n{\ear} \n{X} \nn%
{\sar} \n{} \n{\sar} \n{} \n{\sar} \nn%
S \n{\Ear{\sim}} \n{U \quo \Ga} \n{\ear} \n{X \quo \Ga}%
\diag%
It follows that $U \to U \quo \Ga$ is also a geometric quotient. This
provides an example for a theorem of \mycite{Rosenlicht:63}. Using the
above diagram, it is not hard to see that the map $S \to X \quo \Ga$
is \'etale and that $\Ga \times S \cong X \times_{X \quo \Ga} S$. This
means that $S$ is a slice in the sense of \mycite{luna}. This is
probably the reason why \mycite{Freudenburg:2006} and other authors use
the term {\em local slice} for an element from $K[X]$ on which the
$\Ga$-action is given by a polynomial of degree~$1$ in~$t$, as
in~\eqref{eqLinear}.

The invariantization map $\map{\phi}{K[U]}{K[U]^{\Ga}}$ arises as
follows: The map $S \to \Ga \times S$, $x \mapsto (0,x)$ is a right
inverse of the quotient $\Ga \times S \to S$. Using the isomorphisms
in the above diagram, we obtain a right inverse $U \quo \Ga \to U$ of
the quotient $U \to U \quo \Ga$, from which~$\phi$ arises. In more
explicit terms, for $f \in K[U]$, the function $\map{\phi(f)}{U}{K}$
is defined by sending a point $x \in U$ to the evaluation of~$f$ at
$\frac{-g_{i,1}(x)}{d_i g_{i,0}(x)} \cdot x$, which is the unique
point where the orbit $\Ga \cdot x$ meets $S$.

\section{Linear actions} \label{sLinear}

In this section we consider a linear algebraic group $G$ with a
$G$-module $V$. In this case, Semi-algorithm~\ref{aSemi} for
extracting the invariant ring $K[V]^G$ from a localization $K[V]^G_a$
can be optimized, and the range of applicability of our algorithms can
be broadened. We start by giving a variant of the semi-algorithm for
``unlocalizing'', which is divided into two parts, \aref{aTest} and
Semi-algorithm~\ref{aUnlocalize}. What is exploited is the fact that
the $G$-action respects the graded structure on the polynomial ring
$K[V] = K[x_1 \upto x_n]$, so the algorithms actually extend to
degree-preserving actions on a graded algebra. For simplicity, we
assume in this section that $K$ is an algebraically closed field.

\begin{alg} \label{aTest} \mbox{}%
  \begin{description}
  \item[\bf Input:] A subalgebra $A = K[f_1 \upto f_m] \subseteq K[x_1
    \upto x_n] =: R$ generated by homogeneous polynomials, and a
    homogeneous, nonzero $a \in A$.
  \item[\bf Output:] ``true'' if $a^{-1} A \cap R = A$, ``false''
    otherwise.
  \end{description}
  \begin{enumerate}
    \renewcommand{\theenumi}{\arabic{enumi}}
  \item \label{aTest1} The first step is optional. Let $a_1 \upto a_s
    \in A$ such that~$a$ can be written as a product of powers of
    the~$a_i$. Run the algorithm with~$a$ replaced by~$a_i$. If the
    result is ``true'' for all~$i$, return ``true'' . Otherwise,
    return ``false''.
  \item Take new indeterminates $y_1 \upto y_m$ with $\deg(y_i) :=
    \deg(f_i)$, and form the ideal $\widehat{M} \subseteq K[x_1 \upto
    x_n,y_1 \upto y_m]$ generated by~$a$ and $y_i - f_i$ ($i = 1 \upto
    m$).
  \item Compute a finite homogeneous ideal basis $S$ of the
    elimination ideal $M := K[y_1 \upto y_m] \cap \widehat{J}$.
  \item \label{aTest4} For all $h \in S$, check whether $h(f_1 \upto
    f_m) \in A \cdot a$. (By homogeneity, this test comes down to
    testing the solvability of a system of linear equations.) If this
    is true for all $h \in S$, return ``true''. Otherwise, return
    ``false''.
  \end{enumerate}
\end{alg}

\begin{rem*}
  In MAGMA, each membership test in step~\ref{aTest4} can be done by a
  single call of the function {\tt HomogeneousModuleTest}.
\end{rem*}

\begin{proof}[Proof of correctness of \aref{aTest}]
  To show the correctness of step~\ref{aTest1}, let $a = a_1 a_2$
  with $a_i \in A$. Then it is clear that
  \[
  R \cdot a_1 \cap A = A \cdot a_1 \ \text{and} \ R \cdot a_2 \cap A =
  A \cdot a_2 \quad \Longleftrightarrow \quad R \cdot a \cap A = A
  \cdot a,
  \]
  which implies the correctness of step~\ref{aTest1}.

  The correctness of the remaining steps follows from the fact that
  the $h(f_1 \upto f_m)$ with $h \in S$ generate the ideal $R \cdot a
  \cap A \subseteq A$, which is easy to see.
\end{proof}

\begin{salg}[Unlocalizing the invariant ring of
  $G$-module] \label{aUnlocalize} \mbox{}%
  \begin{description}
  \item[\bf Input:] A linear algebraic group $G$ with a $G$-module
    $V$, a graded subalgebra $B \subseteq K[V]^G$ with a homogeneous,
    nonzero $a \in B$ such that $K[V]^G_a = B_a$.
  \item[\bf Output:] Homogeneous generators $f_1,f_2, \ldots$ of
    $K[V]^G$. If desired, the $f_i$ will form a minimal generating
    set. The procedure terminates after finitely many steps if and
    only if $K[V]^G$ is finitely generated.
  \end{description}
  \begin{enumerate}
    \renewcommand{\theenumi}{\arabic{enumi}}
  \item Set $m := 0$ and $A := K$. For $d = 1,2,3, \ldots$, perform
    steps~\ref{aUnlocalize2}--\ref{aUnlocalize5}.
  \item \label{aUnlocalize2} If $B \subseteq A$ and $a^{-1} A \cap
    K[V] = A$, return $f_1 \upto f_m$. The inclusion test comes down
    to testing the solvability of some systems of linear equations,
    and the second condition can be tested by \aref{aTest}.
  \item \label{aUnlocalize3} Compute a $K$-basis $S$ of $K[V]^G_d$,
    the space of homogeneous invariants of degree~$d$.
  \item \label{aUnlocalize4} This step is optional but recommended,
    and keeps the generating system minimal. Substitute $S$ by a set
    that is maximally linearly independent modulo $A_d$, the
    degree-$d$ part, of $A$.
  \item \label{aUnlocalize5} If $S = \{f_{m+1} \upto f_{m+r}\}$, set
    $A = K[f_1 \upto f_{m+r}]$ and $m := m+r$.
  \end{enumerate}
\end{salg}

\begin{rem*}
  For step~\ref{aUnlocalize3}, one can use
  \mycite[Algorithm~2.7]{kem.separating}, which is very
  efficient. Step~\ref{aUnlocalize4} comes down to the echelonization
  of a certain matrix with entries in $K$. In MAGMA, this step can be
  performed by a single call of the function {\tt
    HomogeneousModuleTestBasis}. The test of $B \subseteq A$ in
  step~\ref{aUnlocalize2} requires several calls of {\tt
    HomogeneousModuleTest}.
\end{rem*}

\begin{proof}[Proof of correctness of
  Semi-algorithm~\ref{aUnlocalize}]
  It suffices to show that for a subalgebra $A \subseteq K[V]^G$ the
  condition $A = K[V]^G$ is equivalent to $B \subseteq A$ and $a^{-1}
  A \cap K[V] = A$. Clearly $A = K[V]^G$ implies the other two
  conditions. Conversely, suppose that $B \subseteq A$ and $a^{-1} A
  \cap K[V] = A$, and let $f \in K[V]^G$. Since $K[V]^G_a = B_a$,
  there exists a positive integer~$k$ such that $f a^k \in B$ so $f
  a^k \in A$. By the second condition, this implies $f a^{k-1} \in A$,
  and then $f \in A$ by induction on~$k$.
\end{proof}

The following algorithm overcomes the limitation of \aref{aLocalize}
given by the condition that the invariant field has to be equal to the
field of fractions of the invariant ring. Because of
\pref{pHashimoto}, the algorithm generalizes to the situation where
$K[V]$ is replaced by a factorial domain $K[X]$. (In the case that the
group of units in $K[X]$ does not coincide with $K \setminus \{0\}$,
one should use the algorithm to compute $K[X]^{G^0}$ and then compute
$K[X]^G$ from this.)

\begin{alg}[Compute invariant rings of reductive
  groups] \label{aMaster} \mbox{}%
  \begin{description}
  \item[\bf Input:] A reductive group $G$ given as a subset of
    $K^m$ by a radical ideal $I_G \subseteq K[z_1 \upto z_m]$, and a
    $G$-module $V$ with the action given by
    \[
    \sigma \cdot v = \bigl(g_1(v,\sigma) \upto g_n(v,\sigma)\bigr)
    \]
    for $v \in V$ and $\sigma \in G$, where $g_i \in K[x_1 \upto
    x_n,z_1 \upto z_m]$.
  \item[\bf Output:] Homogeneous (and, if desired, minimal) generators
    of $K[V]^G$. If the algorithm terminates after finitely many
    steps, it also yields a correct result if $G$ is {\em not}
    reductive; but termination is only guaranteed if $G$ is reductive.
  \end{description}
  \begin{enumerate}
    \renewcommand{\theenumi}{\arabic{enumi}}
  \item Perform steps~\ref{aLocalize1} through~\ref{aLocalize7} of
    \aref{aLocalize}. So with $A \subseteq K(V)$ being the
    $K$-algebra generated by the fractions $f_j/h_j$, the inclusions
    \[
    K[V]^G \subseteq A \subseteq K(V)^G
    \]
    hold. Assume that $f_j$ and $h_j$ are coprime for all~$j$.
  \item \label{aMaster2} Let $a \in K[V]$ be the square-free part of
    the product of the~$h_j$. With $\mathcal{G}_G$ a Gr\"obner basis
    of $I_G$, set
    \[
    c := \frac{\NF_{\mathcal{G}_G}\bigl(a(g_1 \upto g_n)\bigr)}{a}.
    \]
    (It turns out that $c \in K[z_1 \upto z_m]$, and $\sigma^{-1}
    \cdot a = c(\sigma) a$ for $\sigma \in G$.)
  \item \label{aMaster3} With an additional indeterminate~$y$, let
    $\widehat{J} \subseteq K[y,z_1 \upto z_m]$ be the ideal generated
    by $I_G$ and $y - c$. Compute the elimination ideal $J := K[y]
    \cap \widehat{J}$.
  \item \label{aMaster4} If $J = (y^d - 1)$ with~$d$ a positive
    integer, then $a^d \in K[V]^G$. (The condition means that $G$ acts
    on~$a$ by multiplication by $d$th roots of unity.) So one can
    write
    \[
    \frac{f_j}{h_j} = \frac{g_j}{a^{e_j d}}
    \]
    with $e_j$ nonnegative integers and $g_j \in K[V]^G$. Then $B :=
    K[a^d,g_1 \upto g_k] \subseteq K[V]^G$ satisfies $B_{a^d} =
    K[V]^G_{a^d}$, so running Semi-algorithm~\ref{aUnlocalize} on $B$
    and $a^d$ yields (minimal) generators of $K[V]^G$.
  \item \label{aMaster5} This step is only reached when $J$ does not
    have the form $(y^d - 1)$. Let $H \subseteq G$ be the subgroup
    defined by $I_G$ and $c - 1$. By a recursive call to this
    algorithm, compute homogeneous generators of $K[V]^H$.
  \item \label{aMaster6} Use Algorithm~1.2 in
    \mycite{Derksen.Kemper06} to construct a $G$-module $W$ with a
    $G$-equivariant epimorphism $\map{\phi}{K[W]}{K[V]^H}$. It follows
    from the construction of $W$ that $H$ fixes $W$ point-wise. Since
    the quotient group $G/H$ is isomorphic to the multiplicative group
    $\Gm$, Derksen's algorithm (\mycite{Derksen:99}) can be used for
    computing $K[W]^G$. (When using the variant of Derksen's algorithm
    presented by \mycite{Kamke:Kemper:2011}, it is not necessary to
    make the action of $\Gm$ on $K[W]$ explicit.)  Applying~$\phi$ to
    the generators of $K[W]^G$ yields generators of $K[V]^G$. If they
    are not homogeneous, they can be substituted by their homogeneous
    parts. If desired, the generating set can be minimized.
  \end{enumerate}
\end{alg}

\begin{proof}[Proof of correctness of \aref{aMaster}]
  We first show that~$c$ from step~\ref{aMaster2} is a
  polynomial. With
  \[
  I := \bigr\{d \in K[V] \mid d \cdot f_j/h_j \in K[V] \ \text{for
    all} \ j\bigr\},
  \]
  the polynomial~$a$ generates the principal ideal $\sqrt{I}$. Since
  $I$ (and therefore also $\sqrt{I}$) is $G$-stable, this implies that
  for every $\sigma \in G$ there exists $c_\sigma \in K \setminus
  \{0\}$ such that $\sigma^{-1} \cdot a = c_\sigma a$. (This requires
  \pref{pHashimoto} if $K[V]$ is replaced by a factorial domain
  $K[X]$.) Choose a set $S \subseteq K[V]$ such that $S \cup \{a\}$ is
  a basis of $K[V]$ as a $K$-vector space. We can write
  \[
  g := \NF_{\mathcal{G}_G}\bigl(a(g_1 \upto g_n)\bigr) = c a +
  \sum_{i=1}^s c_i b_i
  \]
  with $c,c_i \in K[z_1 \upto z_m]$ and $b_i \in S$. For $\sigma \in
  G$ we have
  \[
  c(\sigma) a + \sum_{i=1}^s c_i(\sigma) b_i = g(\sigma) =
  a\bigl(g_1(\sigma) \upto g_n(\sigma)\bigr) = a\bigl(\sigma^{-1}
  \cdot x_1 \upto \sigma^{-1} \cdot x_n\bigr) = \sigma^{-1} \cdot a =
  c_\sigma a,
  \]
  where $g - a(g_1 \upto g_n) \in I_G$ was used for the second
  equality. This implies $c_i(\sigma) = 0$ for all~$i$, so $c_i \in
  I_G$. Since~$g$ is in normal form with respect to $\mathcal{G}_G$,
  the same is true for $c_i$, and we conclude $c_i = 0$. So, as
  claimed, $g/a = c \in K[z_1 \upto z_m]$, and the above equation
  implies $\sigma^{-1} \cdot a = c(\sigma) a$. It follows that
  $\mapl{\phi}{G}{\Gm}{\sigma}{c(\sigma)}$ is a homomorphism of
  algebraic groups. Since its image $\phi(G)$ is closed in $\Gm$ (see
  \mycite[Proposition~7.4B]{Humphreys}), it is either $\Gm$ or a cyclic
  group generated by some $d$th root of unity. The ideal $J$ from
  step~\ref{aMaster3} corresponds to the image closure of the map $G
  \xrightarrow{\phi} \Gm \hookrightarrow K$, so $J = \{0\}$ or $J =
  (y^d - 1)$. The second case is dealt with in step~\ref{aMaster4}.

  Now assume that $\phi(G) = \Gm$. Then $H$ from step~\ref{aMaster5}
  is the kernel of~$\phi$ and therefore a closed normal subgroup of
  $G$ with $G/H \cong \Gm$. It follows that the unipotent radical
  $R_u(H)$ is normal in $G$, so if $G$ is reductive, the same follows
  for $H$. Since $\dim(H) = \dim(G) - 1$, the termination and
  correctness of the recursive invocation of the algorithm in
  step~\ref{aMaster5} follows by induction. Because $H$ acts
  trivially on $K[V]^H$ and $W$, the $G$-action on both sets induces
  $\Gm$-actions on the categorical quotient $V \quo H$ and on $W$
  given by morphisms. (In fact, obvious though this sounds, an
  argument is required. For $W$, the assertion follows directly from
  \mycite[Corollary~6.10]{Borel}. For $V \quo H$, one can construct a
  homomorphism $K[V]^H \to K[\Gm] \otimes K[V]^H$ defining the
  $\Gm$-action by applying the same corollary to sub-$G$-modules $U
  \subseteq K[V]^H$ and then using the dual action $U \to K[\Gm]
  \otimes U$.) We obtain
  \[
  \phi\bigl(K[W]^G\bigr) = \phi\bigl(K[W]^{\Gm}\bigr) =
  \bigl(K[V]^H\bigr)^\Gm = K[V]^G,
  \]
  where the linear reductivity of $\Gm$ was used for the second
  equality. This proves the correctness of step~\ref{aMaster6}.
\end{proof}

It is interesting to compare \aref{aMaster} to other algorithms for
computing invariant rings of reductive groups. The algorithms that are
known to date are: Derksen's algorithm (see \mycite{Derksen:99}) for
linearly reductive groups, and the algorithm given by
\mycite{kem.separating} for reductive groups that are not linearly
reductive. Both these algorithms require the computation of a
(nonextended) Derksen ideal, and the cost of Derksen's algorithm is
dominated by this step. The algorithm from \mycite{kem.separating}
additionally requires the computation of another elimination ideal and
of a purely inseparable closure. This leads to further Gr\"obner basis
computations, whose cost rises significantly with the characteristic
of $K$. In comparison, \aref{aMaster} requires the computation of a
tamely extended Derksen ideal, which can be much cheaper than a
nonextended Derksen ideal since the number of indeterminates can be
reduced by the dimension of a typical orbit. In addition, Gr\"obner
basis computations are required when invoking \aref{aTest}. These can
significantly contribute to the cost of \aref{aMaster}. However, the
dependence on the characteristic of $K$ is limited to its influence on
the coefficient arithmetic.

This said, the only reasonable way to get a realistic idea of the
performance of the algorithms is to implement them and run
experiments. Derksen's algorithm is included in the standard
distribution of MAGMA. The algorithm from \mycite{kem.separating} and
\aref{aMaster} were implemented in MAGMA by the author.
In fact, the implementation of \aref{aMaster} requires the assumption
that $K(V)^G = \Quot\bigl(K[V]^G\bigr)$. By \tref{tItalian}, this is
guaranteed for the $G = \SL_2(K)$, a group that has kept invariant
theorists spellbound for more than a century. So it is natural to use
actions at $\SL_2(K)$ as benchmarks. The following table collects some
running times of the algorithms. The fist row shows the action. Here
$S^k(U)$ stands for the action on the $k$th symmetric power of the
natural module, i.e., the action on binary forms of degree~$k$, which
is an action of intense classical interest (see
\mycite[Example~2.1.2]{Derksen:Kemper}). The second row contains the
characteristic of $K$. The third and fourth rows show the running
times (in seconds) using the classical algorithm (i.e., Derksen's
algorithm for characteristic~$0$ and the one by
\mycite{kem.separating} for positive characteristic) and using
\aref{aMaster}. The computations were done on a 2.67 GHz Intel Xeon
X5650 processor. The symbols ``$> 600$'' stand for ``longer than the
author's patience.''

\bigskip

\begin{center}
  \begin{tabular}{|c||c|c|c|c|c|c|c|c|c|c|c|}
    \hline Action & \multicolumn{7}{|c|}{$V = S^4(U)$} &
    \multicolumn{4}{|c|}{$V = U \oplus S^3(U)$}  \\ \hline%
    Characteristic & 0 & 2 & 3 & 5 & 7 & 11 & 13 & 0 & 2 & 3 & 5 \\ \hline%
    Classical & 0.36 & 0.03 & 1.64 & 0.39 & 0.99 & 12.3 & 37 & 0.45 & 0.43 &
    3.63& $> 600$ \\ \hline%
    New & 0.35 & 0.1 & 0.32 & 0.21 & 0.26 & 0.22 & 0.23 & 0.74 & 0.1 &
    0.14 & 0.54 \\ \hline
  \end{tabular}
\end{center}

\bigskip

\begin{center}
  \begin{tabular}{|c||c|c|c|c|c|c|c|c|c|c|}
    \hline Action & \multicolumn{6}{|c|}{$V = S^2(U) \oplus S^2(U)$} &
    \multicolumn{4}{|c|}{$V = U \oplus S^2(U) \oplus S^2(U)$}  \\ \hline%
    Characteristic &  0 & 2 & 3 & 5 & 7 & 11 & 0 & 2 & 3 & 5 \\ \hline%
    Classical & 0.01 & 0.02 & 0.07 & 2.4 & 43 & $ > 600$ & 0.12 & 176
    & $> 600$ & $> 600$ \\ \hline%
    New &  0.05 & 0.03 & 0.02 & 0.05 & 0.08 & 0.04 & 0.21 & 0.22 &
    0.18 & 0.17 \\ \hline
  \end{tabular}
\end{center}


\bigskip

The ``outlier'' at $S^4(U)$ in characteristic~$3$ is probably linked
to the fact that the degrees of the generating invariants in
characteristic~$3$ differ from those in other characteristics. The
table shows that \aref{aMaster} is a good alternative to the
algorithms known to date. However, the main benefit is that it (and
the other algorithms from this paper) expand the scope of
computability beyond reductive groups. We finish the paper by an
example that was motivated by a question of Jonathan Elmer and Martin
Kohls.

\begin{ex}
  Consider the action of the upper unipotent group $U_3 \subseteq
  \GL_3(K)$ (with $\ch(K) = 0$) on the space $V = K^{3 \times 3}$ of
  $3 \times 3$ matrices by conjugation. It took 0.13 seconds to
  compute the invariant ring $K[V]^{U_3}$ by using \aref{aMaster}. It
  is a hypersurface of dimension~$6$. We describe a set of generating
  invariants. Let $M = (a_{i,j}) \in K^{3 \times 3}$ be a matrix,
  $(b_{i,j}) \in K^{3 \times 3}$ is its adjugate matrix, and $c_i$ the
  coefficients of the characteristic polynomial of $M$. Then the seven
  generating invariants (of degrees $1,2,3,1,2,3,3$) are given by
  mapping $M$ to
  \[
  c_1, \ c_2, \ c_3, \ a_{3,1}, \ b_{3,1}, \ a_{3,1} b_{3,2} - a_{3,2}
  b_{3,1}, \ a_{2,1} b_{3,1} - a_{3,1} b_{2,1}.
  \]
\end{ex}

\subsection*{Acknowledgements}

I am indebted to Evelyne Hubert for giving an earlier version of the
paper a thorough reading and suggesting, among other things, that I
should put in some more interesting applications. Only her remarks led
me to think of applying the ideas to actions of finite groups on
algebras over rings rather than fields. I am grateful to Mitsuyasu
Hashimoto for his patience in giving me explanations of his proof of
\pref{pHashimoto}. I would also like to thank Martin Lorenz and
Vladimir Popov for sharing their expertise, and Peter Fleischmann, Jim
Shank, and David Wehlau for interesting e-conversations.

\newcommand{\SortNoop}[1]{}

\end{document}